\definecolor{purple}{rgb}{0.9,0.01,0.3}
\def\R{\mathbb R}
\def\N{\mathbb N}
\def\l{\lambda}
\def\cal{\mathcal}
\def\ov\overline
\def\Hper{\widetilde H^1_\#(\Gamma_h)}
\def\H{{\cal H}}
\def\K{{\cal K}}
\def\F{{\cal F}}
\def\hd{{\rm hd}}
\def\I{\mathcal{I}}
\def\cc{\subset\subset}
\def\d{{\rm d}\,}
\def\a{\alpha}
\def\b{\beta}
\def\g{\gamma}
\def\de{\delta}
\def\e{\varepsilon}
\def\s{\sigma}
\newcommand{\medint}{-\kern -,375cm\int}
\newcommand{\medintinrigo}{-\kern -,315cm\int}
\def\Om{\Omega}
\def\pa{\partial}
\def\Id{{\rm Id}\,}
\def\loc{{\rm loc}}
\def\Div{{\rm div}}
\newcommand{\dist}{{\rm dist}}
\newcommand{\E}{\mathcal E}
\def\S{\Sigma}
\def\tr{{\rm tr}\,}
\numberwithin{equation}{section}
\newtheorem{theorem}{Theorem}[section]
\newtheorem{corollary}[theorem]{Corollary}
\newtheorem{lemma}[theorem]{Lemma}
\newtheorem{proposition}[theorem]{Proposition}
\theoremstyle{definition}
\newtheorem{remark}[theorem]{Remark}
\begin{document}

\title[]{Existence and almost everywhere regularity \\ of isoperimetric clusters for fractional perimeters}

\author{M. Colombo}
\address{Institute for Theoretical Studies, ETH Z\"urich, Clausiusstrasse 47, CH-8092 Z\"urich, Switzerland
\\
Institut f\"ur Mathematik, Universitaet Z\"urich, Winterthurerstrasse 190, CH-8057 Z\"urich,
Switzerland}
\email{maria.colombo@math.uzh.ch}

\author{F. Maggi}
\address{Department of Mathematics, The University of Texas at Austin,  2515 Speedway Stop C1200, Austin, Texas 78712-1202, USA}
\email{maggi@math.utexas.edu}

\maketitle

\begin{abstract}
{\rm The existence of minimizers in the fractional isoperimetric problem with multiple volume constraints is proved, together with a partial regularity result.}
\end{abstract}


\section{Introduction} The goal of this paper is establishing basic existence and partial regularity results for the fractional isoperimetric problem with multiple volume constraints. If $E\subset\R^n$, $n\ge 2$, and $s\in(0,1)$, then the {\it fractional perimeter of order $s$ of $E$} is defined as
\begin{equation}
  \label{fractional perimeter}
  P_s(E)=\int_{\R^n}\,w_E(x)\,dx=\int_E\,dx\int_{E^c}\frac{dy}{|x-y|^{n+s}}\,.
\end{equation}
The kernel $z\mapsto|z|^{-n-s}$ is not integrable near the origin, and the potential
\[
w_E(x):=1_E(x)\,\int_{E^c}\frac{dy}{|x-y|^{n+s}}\qquad x\in\R^n
\]
explodes like $\dist(x,\pa E)^{-s}$ as $x\in E$ approaches $\pa E$. Since $t^{-s}$ is integrable near $0$, by decomposing the integral of $w_E$ on a small layer around $\pa E$ as the integral along the normal rays $t\mapsto p-t\,\nu_E(p)$, $p\in\pa E$, then we see that $P_s(E)$, at leading order, is measuring the perimeter $P(E)=\H^{n-1}(\pa E)$ of $E$. This idea is made precise by the fact that, as $s\to 1^-$, $(1-s)\,P_s(E)\to c(n)\,P(E)$ for every set of finite perimeter $E$, see \cite{Bourgain01anotherlook,davilaBV}, and $(1-s)\,P_s\to P$ in the sense of $\Gamma$-convergence \cite{ambrosiodephilippismartinazzi}.

The last few years has seen a great effort by many authors towards the understanding of geometric variational problems in the fractional setting. This line of research has been initiated in \cite{caffaroquesavin} with the regularity theory for the fractional Plateau's problem (see \cite{savinvaldinoci,figallivaldinociLipschitz,barrerafigallivaldinoci,caputoguillen,caffarellivaldinoci} for further developments in this direction), while fractional isoperimetric problems have been the subject of \cite{frankliebseiringer,knupfermuratovI,knupfermuratovII,dicastronovagaruffinivaldinoci,F2M3}. Examples of singular fractional minimal boundaries (boundaries with vanishing fractional mean curvature) are found in \cite{daviladelpinowei,daviladelpinowei2}. Boundaries with constant fractional mean curvature have also been investigated in some detail \cite{daviladelpiniodipierrovaldinoci,cabrefallweth1,cabrefallweth2,ciraolofigallimagginovaga} and their study illustrates how nonlocality brings into play both complications (need for new arguments, for example when in the local case one exploits some direct ODE argument) and simplifications (because of additional rigidities): compare, for example, the stability results from \cite{ciraolomaggi} with those in \cite{ciraolofigallimagginovaga}.

Our goal is starting the study, in the fractional setting, of another classical geometric variational problem, namely the isoperimetric problem with multiple volume constraints. Given $N\in \N$, a $N$-cluster (or simply a cluster) $\E$ is a family $\E=\{\E(h)\}_{h=1}^N$ of disjoint Borel subsets of $\R^n$. The sets $\E(h)$, $h=1,..., N$, are called the chambers of $\E$, while $\E(0)=\R^n\setminus\bigcup_{h=1}^N\E(h)$ is called the exterior chamber of $\E$. When $|\E(h)|<\infty$ for $h=1,...,N$, then the volume vector $m(\E)$ of $\E$ is defined as
\[
m(\E)=(|\E(1)|,...,|\E(N)|)\in\R^N
\]
while the fractional $s$-perimeter of $\E$ is given by
\begin{equation}
  \label{fractional perimeter cluster}
  P_s(\E)=\frac12\sum_{h=0}^NP_s(\E(h))\,.
\end{equation}
Given $m\in\R^N_+$ (that is, $m_h>0$ for $h=1,...,N$), we consider the following isoperimetric problem
\begin{equation}
  \label{fractional isoperimetric problem}
  \inf\Big\{P_s(\E):m(\E)=m\Big\}\,.
\end{equation}
Every minimizer in \eqref{fractional isoperimetric problem} is called an isoperimetric cluster. The following theorem is our main result.

\begin{theorem}\label{thm main}
For every $m\in\R^N_+$ there exists an isoperimetric cluster $\E$ with $m\,(\E)=m$. If we set
\begin{equation}
  \label{boundary of E cluster}
  \pa\E=\Big\{x\in\R^n:\mbox{$\exists h=1,...,N$ such that $0<|\E(h)\cap B_r(x)|<|B_r(x)|$ $\forall r>0$}\Big\}
\end{equation}
then $\pa\E$ is bounded and there exists a closed set $\S(\E)\subset\pa\E$ such that $\H^{n-2}(\S(\E))=0$ if $n\ge 3$, $\S(\E)$ is discrete if $n=2$, and $\pa\E\setminus\S(\E)$ is a $C^{1,\alpha}$-hypersurface in $\R^n$ for some $\a\in(0,1)$.
\end{theorem}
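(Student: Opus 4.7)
The plan is to establish existence by the direct method together with a concentration--compactness argument tailored to the nonlocal kernel $|z|^{-n-s}$, and then to deduce regularity by showing each chamber is locally almost-minimal for $P_s$ and invoking the partial regularity theory of \cite{caffaroquesavin} for fractional minimal boundaries.

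For existence, take a minimizing sequence $\{\E_k\}$ with $m(\E_k)=m$. The uniform bound on each $P_s(\E_k(h))$ and the fractional isoperimetric inequality allow the application of a concentration-compactness dichotomy of Lions type to $1_{\E_k(h)}$: up to translations and subsequences, each chamber either concentrates near a single point or splits into pieces separating to infinity. The crucial nonlocal ingredient is the decay estimate
\[
\int_A\int_B\frac{dy\,dx}{|x-y|^{n+s}}\le\frac{C_{n,s}}{R^{n+s}}\,|A|\,|B|\qquad\text{whenever }\dist(A,B)\ge R\ge 1,
\]
which makes $P_s$ asymptotically additive across far-separated pieces. This reduces the problem to a finite family of bounded subclusters whose volume vectors sum to $m$; strict subadditivity of the infimum in \eqref{fractional isoperimetric problem} with respect to the volume partition (verified by rescaling comparison against a single ball of appropriate volume) then rules out nontrivial splitting. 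Compactness in $L^1_{\loc}$ of $P_s$-bounded sequences, together with lower semicontinuity of $P_s$ under $L^1$-convergence (Fatou on the double integral), delivers a limit cluster $\E$ that achieves the infimum.

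For regularity, I would show that each chamber $\E(h)$ satisfies a $(\Lambda,r_0)$-minimality condition for $P_s$ at any $x\in\pa\E$ not lying in $\overline{\bigcup_{j\ne 0,h}\E(j)}$. Given a competitor $F$ with $F\Delta\E(h)\subset\subset B_{r_0}(x)$, the volume defect $\delta=|F|-|\E(h)|$ is compensated by a smooth bulk deformation of another chamber $\E(k)$ supported in a ball $B_{r_1}(y)$ with $|x-y|$ large; since interactions decay like $\dist^{-n-s}$, the perimeter cost is $O(|\delta|)\le O(|F\Delta\E(h)|)$, yielding
\[
P_s(\E(h))\le P_s(F)+\Lambda\,|F\Delta\E(h)|.
\]
The fractional almost-minimality theory of \cite{caffaroquesavin,barrerafigallivaldinoci,caputoguillen} then gives $C^{1,\a}$-regularity of $\pa\E$ outside a closed singular set $\S_h$ with $\H^{n-2}(\S_h)=0$ (discrete for $n=2$). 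Boundedness of $\pa\E$ is a consequence of uniform density lower bounds for fractional almost-minimizers and the finite total volume $\sum_h m_h$. Finally, the additional points of $\S(\E)$ where three or more chambers meet are handled by a blow-up analysis: tangent-cone classification for fractional minimal clusters shows that this extra set is $\H^{n-2}$-negligible (respectively discrete in $n=2$).

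The main obstacle will be the concentration-compactness step. In contrast to the classical setting---where Almgren's cut-and-paste geometric arguments allow joining pieces at essentially no perimeter cost---the nonlocality of $P_s$ forces one to work with the quantitative decay estimate above to achieve asymptotic additivity, and the strict subadditivity argument ruling out splitting must be set up carefully to absorb the residual tail interactions between the separated pieces. A related subtlety appears in the quasi-minimality proof: the chamber chosen to compensate for the volume defect must be kept far enough from the perturbation ball $B_{r_0}(x)$ to keep its contribution within the required $O(\Lambda\,|F\Delta\E(h)|)$ bound, which requires a global geometric setup separate from the local perturbation.
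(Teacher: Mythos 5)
Your overall scaffold---nucleate, truncate, restore volumes, then derive almost-minimality and feed it to the Caffarelli--Roquejoffre--Savin theory---is close in spirit to what the paper does, but there are two genuine gaps where the argument as stated would not go through.

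\textbf{Existence: the strict subadditivity step.} You reduce the concentration-compactness dichotomy to finitely many far-separated bounded subclusters and then claim that splitting is ruled out by strict subadditivity of $\gamma(m)$, ``verified by rescaling comparison against a single ball of appropriate volume.'' That verification is the one that works for $N=1$, where $\gamma(m)=c\,m^{(n-s)/n}$ and strict concavity of $t\mapsto t^{(n-s)/n}$ does the job. For a multi-chamber cluster, $\gamma$ is a function on $\R^N_+$, the splitting is vector-valued (and the parts $m^{(j)}$ need not be parallel to $m$), and a single ball is not even an admissible $N$-cluster competitor; the rescaling argument gives you nothing beyond the $(n-s)/n$-homogeneity of $\gamma$ along rays, which does not decide strict subadditivity for non-parallel decompositions. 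The natural strict subadditivity proof in the nonlocal setting---paste two nearly optimal bounded subclusters at finite distance and exploit the positive interaction energy between the unions of their interior chambers---already \emph{presupposes} that nearly optimal competitors can be taken bounded, which is what you are trying to prove. The paper sidesteps this circularity by proving a boundedness criterion (Lemma~\ref{lemma:boundedness}): given finitely many bounded nuclei along a minimizing sequence, if their mutual distances diverged one could translate one nucleus to finite distance from another and gain a fixed amount of interaction energy, contradicting minimality. That argument replaces strict subadditivity entirely and, crucially, does not need a pre-existing theory of minimizers at lower volume vectors. Your decay estimate for $I_s$ is the right tool, but it must be used to produce a \emph{strictly better competitor for the given} $m$, not to compare values of $\gamma$.

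\textbf{Regularity: the singular set estimate.} The almost-minimality $P_s(\E(h))\le P_s(F)+\Lambda|F\Delta\E(h)|$ at points where only $\E(h)$ and $\E(0)$ are present, combined with \cite{caffaroquesavin,caputoguillen}, does give $C^{1,\a}$-regularity of the part of $\pa\E$ seen by exactly two chambers; this matches the paper's Theorem~\ref{thm reg part 1}. But your treatment of the set where three or more chambers meet---``tangent-cone classification for fractional minimal clusters shows that this extra set is $\H^{n-2}$-negligible''---appeals to a classification that does not exist. The paper instead runs a Federer dimension-reduction argument, and the essential technical input enabling it is the monotonicity formula (Theorem~\ref{thm:monot}) for the weighted Dirichlet energy of the Caffarelli--Silvestre extensions of the $\E(h)$'s, proved at the cluster level using the $(\Lambda,r_0)$-minimality of the whole partition (not of a single chamber). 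Without the monotonicity formula you cannot conclude that blow-ups are conical (Theorem~\ref{thm cono tangente}), and without conicality you cannot run dimension reduction or, in $n=2$, invoke Savin--Valdinoci to get discreteness. A related, smaller point: the deformation used to restore volumes must be a flow moving an interface between two chambers (Lemma~\ref{lemma:campo}); a ``bulk deformation supported in a ball'' interior to a chamber is volume-preserving on that chamber and fixes nothing, so the construction needs the interface-point bookkeeping of Lemma~\ref{lemma:linkedchambers} and Proposition~\ref{prop:vol-fix}.
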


Let us review the theory of isoperimetric clusters when the classical perimeter, not the fractional one, is minimized. This theory has been initiated by Almgren \cite{Almgren76} with the proof of the analogous result to Theorem \ref{thm main}, namely an existence and $C^{1,\a}$-regularity theorem out of a closed singular set of Hausdorff dimension $n-1$. When $n=2$ the only singular minimal cone consists of three half-lines meeting at 120 degrees at a common end-point, so that, by a standard dimension reduction argument, the singular set has Hausdorff dimension at most $n-2$, and is discrete when $n=2$. (This estimate is of course sharp.) Taylor \cite{taylor76} has proved that, if $n=3$, then the only singular cones are obtained either by the union of three half-planes meeting at 120 degrees along a common line, or as cones spanned by the edges of regular tetrahedra over their barycenters; and that, moreover, $\pa\E$ is locally $C^{1,\a}$-diffeomorphic to its tangent cone at every point, including singular ones. The regular part $\pa\E\setminus\S(\E)$ has constant mean curvature and is real analytic, in dimension $n=3$ up to the singular set \cite{nitsche,kindspruck}. Regularity of and near the singular set in dimension $n\ge 4$ seems still to be an open problem.

Explicit examples of isoperimetric clusters are known just in a few cases. In the case of two chambers ($N=2$) the only isoperimetric clusters are double-bubbles, whose boundaries consist of three $(n-1)$-dimensional spherical caps meeting at $120$-degrees along a $(n-2)$-dimensional sphere; see \cite{small_doublebubble} in dimension $n=2$, \cite{HutchMorganRosRitore} ($n=3$) and \cite{Reichardt,ReichardtColl} ($n\ge 4$). In the case of three chambers ($N=3$) one can define a candidate isoperimetric cluster, the so-called triple bubble, enclosing three given volumes. When $n=2$, the minimality of this triple bubble was proved in \cite{wichi}. Another important isoperimetric problem is partitioning a flat torus into chambers of equal volumes. In the case $n=2$ this problem has been solved in \cite{hales}, where the minimality of hexagonal honeycomb partitions is proved. Global stability inequalities for planar double bubbles and for hexagonal honeycombs have been obtained in \cite{CiLeMaFUGLEDE} and \cite{carocciamaggi}, together with quantitative descriptions of minimizers in the presence of a small potential term.

The present paper naturally opens two kind of questions, which are actually closely related: first, understanding singularities of fractional isoperimetric clusters and, second, characterizing fractional isoperimetric clusters in some basic cases. Thinking about the arguments used to achieve these goals in the local theory, the extension to the fractional case is necessarily going to require the introduction of new arguments and ideas.

One may speculate that the fractional theory may be helpful in advancing the local theory: on the one hand, depending on the question under study, the rigidity of nonlocal perimeters may end up bringing in some simplifications with respect to the local case; on the other hand, information in the classical setting can be recovered from the fractional case in the limit $s\to 1^-$. In any case, at present, the viability of this idea has not been really tested on specific examples.

The paper is divided into two sections. In section \ref{section existence} we prove the existence part of Theorem \ref{thm main} by adapting to the fractional setting Almgren's original proof (as presented in \cite[Part IV]{maggiBOOK}). In section \ref{section regularity} we prove the partial regularity assertion in Theorem \ref{thm main}. Similarly to what done in \cite{caffaroquesavin} for fractional perimeter minimizing boundaries, we exploit an extension problem to obtain a monotonicity formula, showing that nearby most points of the boundary only two chambers of the isoperimetric cluster are present. When this happens we can show that the two neighboring chambers locally almost-minimize fractional perimeter, and we can thus apply the main result in \cite{caputoguillen} to prove their $C^{1,\a}$-regularity.

\bigskip

\noindent  {\bf Acknowledgment:} Work partially supported by NSF DMS Grant 1265910 and DMS-FRG Grant 1361122.
%
%
%
%
%
%
%
%
%

\section{Existence theorem}\label{section existence} The goal of this section is proving the existence of isoperimetric clusters of any given volume. Precisely, given $m\in\R^N_+$ we discuss the existence of isoperimetric sets of volume $m$, that is, minimizers in
\begin{equation}
\label{isoperimetric problem fractional existence}
\g=\inf \big\{ P_s(\E): \E \mbox{ is an $N$-cluster in $\R^n$ with }m (\E) = m \big\}\,.
\end{equation}
Every minimizing sequence $\{\E_k\}_{k\in\N}$ is compact in $L^1_{{\rm loc}}(\R^n)$ (see section \ref{section notation} for the terminology used here and in the sequel) and fractional perimeters are trivially lower semicontinuous with respect to this convergence, so that the only difficulty in showing the existence of minimizers is the possibility that minimizing sequences do not converge in $L^1(\R^n)$ (loss of volume at infinity). Almgren's strategy to fix this problem \cite{Almgren76} (which predates by a decade the formalization of this kind of argument in the theory of concentration-compactness!) consists in {\it nucleating}, {\it truncating}, {\it volume-fixing} and {\it translating} a given minimizing sequence. The nucleation step consists in decomposing the cluster $\E_k$ into finitely many ``chunks'' which contain most of the volume. These chunks are defined by intersecting the chambers of $\E_k$ with a finite collection of balls of equal radii, each chunk having bounded diameter and possibly diverging from the others. In the truncation step  the chambers of $\E_k$ are ``chopped'' by a slight enlargement of the nucleating balls in such a way that the perimeter is decreased by an amount which is proportional to the volume left out. The volume is then restored by slight deformations of the clusters. By these operations one has obtained a new minimizing sequence, localized into finitely many regions of bounded diameter. In the classical case, where local perimeter is minimized, one can finally translate these nuclei so to obtain a new minimizing sequence entirely contained in a bounded region. In the nonlocal case one cannot freely translate disconnected parts of the cluster without changing in a complex way its fractional perimeter. However, in section \ref{section boundedness} we show that once a sequence of clusters have bounded fractional perimeter and is localized into finitely many (possibly diverging) regions of bounded diameter, then the sequence is actually bounded (see Lemma \ref{lemma:boundedness}). In sections \ref{section volumefixing}, \ref{section truncation}, \ref{section nucleation} we take care, respectively, of the volume-fixing, truncation and nucleation steps of the argument, highlighting the differences brought in Almgren's argument by the nonlocality of fractional perimeters. Finally, in section \ref{section proof of existence} we combine these tools to prove the existence of isoperimetric sets.

\subsection{Notation and terminology}\label{section notation} Given disjoint Borel sets $E,F\subset\R^n$ and $s\in(0,1)$, we define the fractional interaction energy of order $s$ between $E$ and $F$ by setting
$$
I_s(E,F) = \int_E\int_{E^c}\frac{dx\, dy}{|x-y|^{n+s}}\,.
$$
The fractional $s$-perimeter is then given by $P_s(E)=I_s(E,E^c)$, see \eqref{fractional perimeter}. The $s$-perimeter of $E\subset\R^n$ relative to an open set $\Omega\subset\R^n$ is defined by the formula
\[
P_s(E;\Omega)=I_s(E\cap \Omega, E^c\cap \Omega) + I_s(E\cap \Omega, E^c\cap \Omega^c) + I_s(E\cap \Omega^c, E^c\cap \Omega)\,.
\]
The motivation for this definition lies in the fact that if $P_s(E)$ and $P_s(F)$ are both finite and $E\cap\Om^c=F\cap\Om^c$, then $P_s(E)-P_s(F)=P_s(E;\Om)-P_s(F;\Om)$.

A $N$-cluster, or simply a cluster, is a family $\E=\{\E(h)\}_{h=1}^N$ of disjoint sets, called the chambers of $\E$. The set  $\E(0)=\R^n\setminus\bigcup_{h=1}^N\E(h)$ is called the exterior chamber of $\E$. The volume of $\E$ is the vector $m(\E)=(|\E(1)|,...,|\E(N)|)$. The {\it relative distance between the $N$-clusters $\E$ and $\E'$ in $\Om\subseteq \R^n$} is defined by
$$
d_\Om(\E, \E') = \sum_{h=0}^N |\Om \cap (\E(h) \Delta \E'(h))|\,.
$$
The {\it relative $s$-perimeter $P_s(\E;\Om)$ of the cluster $\E$ in $\Om$} is defined as
$$
P_s(\E; \Omega) = \frac{1}{2} \sum_{i=1}^M P_s(\E(i);\Omega)\,,
$$
so that $P_s(\E)=P_s(\E;\R^n)$, see \eqref{fractional perimeter cluster}. We say that a sequence $\{\E_k\}_{k\in\N}$ of $N$-clusters converges in $L^1_{{\rm loc}}(\Om)$ to a $N$-cluster $\E$ if $1_{\E_k(h)}\to 1_{\E(h)}$ in $L^1_{\rm loc}(\Om)$ for every $h=1,...,N$. If $\sup_{k\in\N}P_s(\E_k;\Om)<\infty$, then one can find a subsequence of $\{\E_k\}_{k\in\N}$ which admits an $L^1_{{\rm loc}}(\Om)$ limit. Finally, the boundary of a Borel set $E\subset\R^n$ is defined as
\begin{equation}
\label{boundary of E set}
\pa E=\Big\{x\in\R^n:0<|E\cap B_r(x)|<|B_r(x)|\qquad\forall r>0\Big\}\,.
\end{equation}
In this way \eqref{boundary of E cluster} is equivalent to
\[
\pa\E=\bigcup_{h=1}^N\pa\E(h)\,.
\]

\subsection{A boundedness criterion}\label{section boundedness} The following lemma exploits the rigidity of fractional perimeters to show that a cluster consisting of finitely many pieces localized in different bounded regions has actually bounded diameter.

\begin{lemma}\label{lemma:boundedness}
Let $\{\E_k\}_{k\in\N}$ be a minimizing sequence for \eqref{isoperimetric problem fractional existence}. Let us assume that there exist positive constants $R$ and $c$ and, for every $k\in\N$, finitely many points $\{x_k(i)\}_{i=1,...,L(k)}$, with the property that
\begin{eqnarray}
\label{hp:confinement}
\E_k(h) \subseteq \bigcup_{i=1}^{L(k)} B_R(x_{k}(i))\,,&&\qquad \forall  k\in \N\,, h=1,...,N\,,
\\
\label{hp:finite-number}
\sup_{k\in\N}L(k) <\infty\,,&&
\\
\label{eqn:un-poco-di-massa}
\sum_{h=1}^{N} |\E_k(h)\cap B_R(x_{k}(i))| \geq c\,,&&\qquad \forall i=1,..., L(k)\,, k\in \N\,.
\end{eqnarray}
Then there exists $R_0>0$ and a subsequence (not relabelled) 
such that $\E_{k}(h) \subseteq B_{R_0}(x_{k}(1))$ for every $h = 1,...,N$ and for every $k\in\N$.
\end{lemma}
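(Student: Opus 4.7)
The plan is to argue by contradiction against minimality: were some of the balls $B_R(x_k(i))$ to drift to infinity away from $B_R(x_k(1))$, one could translate the far-away pieces inward to produce a strictly better competitor. After passing to a subsequence and relabeling, I would assume $L(k)\equiv L$ and that for each of the finitely many pairs $(i,j)\in\{1,\dots,L\}^2$ the quantity $|x_k(i)-x_k(j)|$ admits a limit in $[0,+\infty]$. Declaring $i\sim j$ iff this limit is finite defines an equivalence relation on $\{1,\dots,L\}$ with classes $G_1,\dots,G_M$, where $1\in G_1$; the goal is to show $M=1$.

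Suppose by contradiction $M\geq 2$. Pick a representative $i_\alpha\in G_\alpha$ and set $W_k^\alpha=\bigcup_{i\in G_\alpha}B_R(x_k(i))$, $V_k=\bigcup_{h=1}^N\E_k(h)$, $\E_k^\alpha(h)=\E_k(h)\cap W_k^\alpha$ and $V_k^\alpha=V_k\cap W_k^\alpha$. Within-group distances being uniformly bounded, there exists $R^*>0$ independent of $k$ with $W_k^\alpha\subset B_{R^*}(x_k(i_\alpha))$, while $|x_k(i_\alpha)-x_k(i_\beta)|\to\infty$ whenever $\alpha\neq\beta$. Fix once and for all target points $z_1,\dots,z_M\in\R^n$ such that the balls $B_{R^*}(z_\alpha)$ are pairwise disjoint; set $\tau_k^\alpha=z_\alpha-x_k(i_\alpha)$ and
\[
\widetilde\E_k(h)=\bigcup_{\alpha=1}^M\bigl(\tau_k^\alpha+\E_k^\alpha(h)\bigr),\qquad h=1,\dots,N.
\]
The disjointness of the target balls ensures $\widetilde\E_k$ is a genuine $N$-cluster, and $m(\widetilde\E_k)=m(\E_k)=m$, so $P_s(\widetilde\E_k)\geq\g$.

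The elementary identity
\[
P_s(A)=\sum_\alpha P_s(A^\alpha)-2\sum_{\alpha<\beta}I_s(A^\alpha,A^\beta),
\]
valid for any disjoint decomposition $A=\bigsqcup_\alpha A^\alpha$ (obtained by writing $P_s(A^\alpha)=I_s(A^\alpha,A^c)+\sum_{\beta\neq\alpha}I_s(A^\alpha,A^\beta)$ and summing in $\alpha$), applied to each $\E_k(h)$ for $h\geq 1$ and to $V_k$, combined with $\sum_{h=0}^N P_s(\E_k(h))=2P_s(\E_k)$ and $P_s(V_k)=P_s(\E_k(0))$, yields
\[
2\,P_s(\E_k)=\sum_\alpha\Bigl[P_s(V_k^\alpha)+\sum_{h=1}^N P_s(\E_k^\alpha(h))\Bigr]-2\sum_{\alpha<\beta}\Bigl[I_s(V_k^\alpha,V_k^\beta)+\sum_{h=1}^N I_s(\E_k^\alpha(h),\E_k^\beta(h))\Bigr],
\]
and the analogous identity for $\widetilde\E_k$. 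The bracketed ``local'' terms are translation-invariant, hence identical for the two clusters. For the cross-group terms of $\E_k$, the diverging distances together with the uniform bound $|V_k|\leq\sum_h m_h$ force every $I_s(V_k^\alpha,V_k^\beta)$ and $I_s(\E_k^\alpha(h),\E_k^\beta(h))$ to vanish as $k\to\infty$. For the translated cluster, hypothesis \eqref{eqn:un-poco-di-massa} gives $|V_k^\alpha|\geq c$ and $\tau_k^\alpha+V_k^\alpha\subset B_{R^*}(z_\alpha)$, so
\[
I_s(\tau_k^\alpha+V_k^\alpha,\tau_k^\beta+V_k^\beta)\geq\frac{c^{\,2}}{(|z_\alpha-z_\beta|+2R^*)^{n+s}}=:\eta_{\alpha\beta}>0.
\]
Combining these estimates gives $P_s(\E_k)-P_s(\widetilde\E_k)\geq\eta_{12}+o(1)$, which contradicts $P_s(\widetilde\E_k)\geq\g=\lim_k P_s(\E_k)$.

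The main obstacle, such as it is, is bookkeeping: one must verify that the translated chunks from different groups stay pairwise disjoint across all chambers so that $\widetilde\E_k$ really is a cluster with the prescribed volume vector $m$. This is arranged by the pairwise disjointness of the target balls $B_{R^*}(z_\alpha)$, combined with the fact that within a single group the chambers $\E_k^\alpha(h)$ were already pairwise disjoint inside the original cluster $\E_k$.
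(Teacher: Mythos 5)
Your argument is correct and reaches the same contradiction as the paper's, but the bookkeeping is genuinely different. The paper isolates the two asymptotically closest groups, translates only one of them to distance $3S$ from the other, and estimates the resulting perimeter through Lemma~\ref{lemma:additive-per-limit} (an asymptotic additivity statement) combined with the two-sided subadditivity bound \eqref{eqn:p-s-subadd}. You instead translate \emph{every} group to a fixed configuration of disjoint target balls and use the exact identity
\[
P_s(A)=\sum_\alpha P_s(A^\alpha)-2\sum_{\alpha<\beta}I_s(A^\alpha,A^\beta)
\]
for a disjoint decomposition $A=\bigsqcup_\alpha A^\alpha$, applied to each chamber and to $V_k=\E_k(0)^c$. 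This makes the comparison symmetric: the ``local'' terms are manifestly translation-invariant, so the entire perimeter defect $P_s(\E_k)-P_s(\widetilde\E_k)$ is captured by the cross terms, which vanish for the original diverging configuration and are bounded below by $\eta_{12}>0$ for the compacted one. This buys you a cleaner cancellation and removes the need for the separate asymptotic-additivity lemma; the price is that you translate more pieces and must verify the resulting object is still a cluster with the right volume. You do handle the latter point, except for one small omission: for the identity (and for volume preservation $\sum_\alpha|\E_k^\alpha(h)|=|\E_k(h)|$) you need the sets $W_k^\alpha$, $\alpha=1,\dots,M$, to be pairwise disjoint, not merely to cover $\bigcup_i B_R(x_k(i))$. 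This holds for $k$ large precisely because the inter-group distances $|x_k(i_\alpha)-x_k(i_\beta)|$ diverge, so after one further subsequence extraction your decomposition is a genuine partition; you should state this explicitly. With that caveat noted, your estimate $P_s(\E_k)-P_s(\widetilde\E_k)\ge\eta_{12}-o(1)$, combined with $P_s(\widetilde\E_k)\ge\gamma$ and $P_s(\E_k)\to\gamma$, gives the contradiction.
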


Before proving the lemma, we recall that the $s$-perimeter is subadditive, and more precisely for every couple of disjoint measurable sets $E, F\subseteq \R^n$ we have
 \begin{equation}
 \label{eqn:p-s-subadd}
  P_s(E)+ P_s(F) - \frac{2|E| \, |F|}{{\rm dist}(E, F)^{n+s}} \leq P_s(E \cup F)
  \leq P_s(E)+ P_s(F) - \frac{2|E| \, |F|}{{\rm diam}(E \cup F)^{n+s}}\,.
 \end{equation}
 Indeed, we have that $P_s(E \cup F) =   P_s(E)+ P_s(F) - 2\,I_s(E,F)$ and
$$\frac{|E| \, |F|}{{\rm diam}(E \cup F)^{n+s}} \leq I_s(E,F) \leq \frac{|E| \, |F|}{{\rm dist}(E, F)^{n+s}}.$$
This observation will be applied to estimate the perimeter of a sequence of clusters with a finite number of ``components'' which are moving away from each other.

\begin{lemma}\label{lemma:additive-per-limit}
Let $E_k \subseteq \R^n$ be a sequence of measurable sets such that
$$
E_k \subseteq \bigcup_{i=1}^L B_R(x_k(i)) \qquad \forall k\in \N\,,
$$
where $R>0$, $L\in\N$ and, for each $i=1,...,L$, $\{x_k(i)\}_{k\in \N}$ are sequences of points such that
\begin{equation}
\label{hp:centers-away-lemmino}
\lim_{k\to \infty}\inf_{1\le i< j\le L} |x_k(i)-x_k(j)| = \infty\,.
\end{equation}
Then
\begin{equation}
\label{eqn:limit-e-k-lemmino}
\lim_{k\to \infty} \Big| P_s(E_k)-  \sum_{i=1}^L P_s\big(E_k \cap B_R(x_k(i))\big) \Big| = 0\,.
\end{equation}
 \end{lemma}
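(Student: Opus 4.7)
The plan is to reduce \eqref{eqn:limit-e-k-lemmino} to a finite sum of pairwise interaction energies between distant chunks, each of which vanishes in the limit. First, by \eqref{hp:centers-away-lemmino}, for all $k$ sufficiently large one has $|x_k(i)-x_k(j)|>2R$ whenever $i\neq j$, so the balls $B_R(x_k(i))$ become pairwise disjoint. Setting $E_k^i:=E_k\cap B_R(x_k(i))$, these sets then form a disjoint partition of $E_k$.

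Next, I would iterate the identity $P_s(A\cup B)=P_s(A)+P_s(B)-2\,I_s(A,B)$ (valid for any disjoint $A,B$ with finite $s$-perimeter, and used implicitly in the derivation of \eqref{eqn:p-s-subadd}). A straightforward induction on $L$ yields the exact decomposition
\[
P_s(E_k)=\sum_{i=1}^L P_s(E_k^i)-2\sum_{1\le i<j\le L}I_s(E_k^i,E_k^j),
\]
so that the lemma reduces to showing $I_s(E_k^i,E_k^j)\to 0$ as $k\to\infty$ for each of the $\binom{L}{2}$ pairs $i<j$.

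Finally, the elementary bound $I_s(A,B)\le |A|\,|B|/\dist(A,B)^{n+s}$ recalled in the proof of \eqref{eqn:p-s-subadd}, combined with the inclusion $E_k^i\subseteq B_R(x_k(i))$ and the distance estimate $\dist(B_R(x_k(i)),B_R(x_k(j)))\ge |x_k(i)-x_k(j)|-2R$, gives
\[
I_s(E_k^i,E_k^j)\le \frac{|B_R|^2}{(|x_k(i)-x_k(j)|-2R)^{n+s}},
\]
whose right-hand side vanishes as $k\to\infty$ by \eqref{hp:centers-away-lemmino}; summing over the finite collection of pairs concludes the proof. I do not foresee a substantive obstacle here: the argument is essentially the observation that the kernel $|z|^{-n-s}$ has an integrable tail, so finitely many interactions between far-apart chunks of bounded size decay uniformly. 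The only care needed is bookkeeping the factor $2$ on the interaction terms in the iterated subadditivity identity, and verifying that each $E_k^i$ has finite $s$-perimeter (which is automatic, since $E_k^i$ is a bounded measurable set).
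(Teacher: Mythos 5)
Your argument is correct and follows essentially the same route as the paper: the paper likewise reduces \eqref{eqn:limit-e-k-lemmino} to bounding the cross interactions $I_s(E_k^i,E_k^j)$, obtaining the upper bound from subadditivity and the lower bound by iterating \eqref{eqn:p-s-subadd}; your exact identity $P_s(E_k)=\sum_i P_s(E_k^i)-2\sum_{i<j}I_s(E_k^i,E_k^j)$ simply packages those two one-sided bounds into one line, which is slightly cleaner. One small slip worth flagging: it is \emph{not} true that a bounded measurable set automatically has finite $s$-perimeter (a bounded set can have a rough enough boundary that $P_s=\infty$). This does not affect your proof, because the cross-terms $I_s(E_k^i,E_k^j)$ are always finite for disjoint bounded sets at positive distance, so the identity holds in $[0,\infty]$ and the quantity $|P_s(E_k)-\sum_i P_s(E_k^i)|=2\sum_{i<j}I_s(E_k^i,E_k^j)$ is controlled exactly as you say, whether or not the individual $P_s(E_k^i)$ are finite.
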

 \begin{proof} The inequality
 $$
 P_s(E_k)\leq \sum_{i=1}^L P_s\big(E_k \cap B_R(x_k(i))\big)
 $$
 follows from the subadditivity of the $s$-perimeter. Moreover, by induction over \eqref{eqn:p-s-subadd}, given $L$ sets $F_1$,..., $F_L$ whose mutual distances are bigger than $D>0$, one has
\begin{equation}
\begin{split}
P_s\Big( \bigcup_{i=1}^L F_i \Big) \geq \sum_{i=1}^L P_s(F_i) -
\frac{ 2L^2 \max_{i=1,...,L} |F_i|^2}{D^{n+s}}\,.
\end{split}
\end{equation}
Given $k \in \N$, we apply this inequality to the sets $F_i=E_k \cap B_R(x_k(i)))$. Since in this case we have $|F_i|\le |B_R|$ and $D\ge\min_{i\neq j }|x_k(i)-x_k(j)| - 2R$, we obtain
$$
P_s(E_k)\geq \sum_{i=1}^L P_s\big(E_k \cap B_R(x_k(i))\big) - \frac{ 2L^2 |B_R|^2}{\min_{i\neq j }(|x_k(i)-x_k(j)| - 2R)^{n+s}}\,.
$$
By \eqref{hp:centers-away-lemmino} we obtain \eqref{eqn:limit-e-k-lemmino}.
 \end{proof}

\begin{proof}[Proof of Lemma~\ref{lemma:boundedness}.]
We argue by contradiction, assuming that there exists a minimizing sequence $\{\E_k\}_{k\in\N}$ in \eqref{isoperimetric problem fractional existence} such that \eqref{hp:confinement}, \eqref{hp:finite-number}, and \eqref{eqn:un-poco-di-massa} hold, but with
\[
\lim_{k\to\infty}\max_{1\le h\le N}{\rm diam}\,(\E_k(h))=+\infty\,.
\]
Up to extracting a subsequence, we may assume that $L(k)=L_0$ independent on $k$.

\medskip

\noindent {\it Step one}: We claim that there exist $L \in \{ 2,.., L_0\}$ and  $S \geq R$ such that, up to extracting a subsequence in $k$ and up to reordering the set $\{1,...,L_0\}$, we have
\begin{eqnarray}
\label{hp:centers-away}
\lim_{k\to \infty} |x_k(i)-x_k(j)| = \infty\,,&&\qquad\forall i,j \in \{1,...,L\}, i\neq j,
\\
\label{hp:confinement-S}
\E_k(h) \subseteq \bigcup_{i=1}^L  B_S(x_k(i))\,,&&\qquad\forall k\in \N, \, h=1,...,N\,,
\\
\label{eqn:quanto-di-massa-S}
\sum_{h=1}^{N} |\E_k(h)\cap  B_S(x_k(i))| \geq c\,,&& \qquad \forall i=1,..., L,
\end{eqnarray}
where the constant $c$ is the one appearing in \eqref{eqn:un-poco-di-massa}. Indeed, up to extracting subsequences, we may assume that for every $i,j\in \{1,...,L_0\}$ there exists $S(i,j)=\lim_{k\to \infty} |x_k(i)- x_k(j)| \in [0,\infty]$. We then say that $\{x_k(i)\}_{k\in \N}$ and  $\{x_k(j)\}_{k\in \N}$ are {\it asymptotically close} if $S(i,j) <\infty$, and introduce an equivalence relation $\sim$ on $\{ 1,..., L_0\}$ so that $i \sim j$ if and only if $\{x_k(i)\}_{k\in \N}$ and  $\{x_k(j)\}_{k\in \N}$ are asymptotically close. Up to reordering $\{ 1,..., L_0\}$, we may assume that $L$ is such that $\{1,...,L\}$ contains exactly one representative of each equivalence class. Hence, \eqref{hp:centers-away} follows by the fact that representatives of different classes cannot be asymptotically close.
Finally, by taking $S:= \sup_{i,j\in \{1,...,L\}, i \sim j} S(i,j) +R$, we clearly have $B_R(x_{k}(i)) \subseteq B_S(x_{k}(j))$ for every $i,j\in \{1,...,L\}$ with $i \sim j$, so that \eqref{hp:confinement} implies \eqref{hp:confinement-S}. Finally, \eqref{eqn:quanto-di-massa-S} follows from \eqref{eqn:un-poco-di-massa} since $ B_R(x_{k}(i)) \subseteq   B_S(x_k(i))$.

\medskip

\noindent {\it Step two}: Up to further extracting subsequences and reordering indices, we may assume that
$$
|x_k(1)-x_k(2)| \leq |x_k(i)-x_k(j)|\,,\qquad\forall k\in\N\,, i,j\in \{1,...,L\}\,, i\neq j\,.
$$
and that
\[
\lim_{k\to \infty} P_s(\E_k(h))\,\,\mbox{exists}\quad \forall h=1,...,L\,.
\]
Moreover, up to a translation and a rotation, we may assume that
\[
x_k(1)= 0\,,\qquad\frac{x_k(2)}{|x_k(2)|}=e_1\,.
\]
We now define a new sequence $\{\E'_k\}_{k\in\N}$ so that $\E_k'$ coincides with $\E_k$ in the balls  $B_S(x_k(i))$ with $i\neq 2$, whereas the part of $\E_k$ inside $B_S(x_{k}(2))$ is translated at distance $3S$ from $x_k(1)=0$: more precisely,
for every $h=1,..., N$ we set
\begin{equation*}
\begin{split}
\E_k'(h) =&
  \Big( (\E_k(h) \cap B_S(x_{k}(2))) + (3S - |x_k(2)|)e_1 \Big)\cup
\bigcup_{i\neq 2 } \big( \E_k(h) \cap  B_S(x_k(i)) \big)\,.
\end{split}
\end{equation*}
By Lemma~\ref{lemma:additive-per-limit} applied to each chamber of $\E_k$ and to $\E_k(0)^c$  we have
\begin{equation*}
\begin{split}
2\gamma &= 2\lim_{k\to \infty} P_s(\E_k)
=\lim_{k\to \infty} \Big(P_s(\E_k(0)^c)+ \sum_{h=1}^N P_s(\E_k(h))\Big)
\\
&= \lim_{k\to \infty}\Big[ \sum_{i=1}^L  P_s\big(\E_k(0)^c \cap  B_S(x_k(i))\big) +
\sum_{h=1}^N \sum_{i=1}^L  P_s\big(\E_k(h) \cap  B_S(x_k(i))\big) \Big].
\end{split}
\end{equation*}
We can use the same argument on the chambers of $\E_k'$ which are contained in the balls $\{B_{4S}(0),B_S(x_k(i)):3\le i\le L\}$, in order to obtain
\begin{equation*}
\begin{split}
2\,\limsup_{k\to \infty} P_s(\E'_k)&=\limsup_{k\to \infty} \Big(P_s(\E_k'(0)^c)+ \sum_{h=1}^N P_s(\E_k'(h))\Big)
\\
&=\limsup_{k\to \infty} \Big[ P_s\big(\E'_k(0)^c \cap B_{4S}\big) + \sum_{i=3}^L  P_s\big(\E_k(0)^c \cap  B_S(x_k(i))\big)
\\
& \hspace{4em}+\sum_{h=1}^N P_s\big(\E'_k(h) \cap B_{4S}\big) + \sum_{h=1}^N\sum_{i=3}^L  P_s\big(\E_k(h) \cap  B_S(x_k(i))\big) \Big]\,.
\end{split}
\end{equation*}
By combining these identities we get
\begin{equation}
\label{eqn:limit-e-k}
\begin{split}
2\,\gamma-2\,\limsup_{k\to \infty} P_s(\E'_k) =&  \limsup_{k\to \infty} \Big[ -P_s\big(\E'_k(0)^c \cap B_{4S}\big) + \sum_{i=1,2}  P_s\big(\E_k(0)^c \cap B_S(x_k(i))\big)
\\
& \hspace{4em}-\sum_{h=1}^N P_s\big(\E'_k(h) \cap B_{4S}\big) + \sum_{i=1,2} \sum_{h=1}^N P_s\big(\E_k(h) \cap  B_S(x_k(i))\big)\Big]\,.
\end{split}
\end{equation}
By the subadditivity of the $s$-perimeter, for every $k \in \N$ and $h=1,...,N$ one has
\begin{equation}
\label{eqn:non-male}
P_s\big(\E'_k(h) \cap B_{4S}\big) \leq P_s\big(\E_k(h) \cap B_S) +P_s(\E_k(h) \cap B_S(x_k(2))\big)\,.
\end{equation}
At the same time, for every $k \in \N$,
\begin{equation}
\label{eqn:guadagno}
P_s\big(\E'_k(0)^c \cap B_{4S}\big) \leq P_s\big(\E_k(0)^c \cap B_S\big) +P_s\big(\E_k(0)^c \cap B_S(x_k(2))\big)- \frac{2c^2}{(8S)^{n+s}}\,.
\end{equation}
To prove \eqref{eqn:guadagno}, we exploit the upper bound in \eqref{eqn:p-s-subadd} with $E= \E'_k(0)^c \cap B_S$ and $F= \E'_k(0)^c \cap B_S(3Se_1)$. Since $E \cup F \subseteq B_{4S}$ and $|E|, |F| \geq c$ by \eqref{eqn:un-poco-di-massa}, we find that
\begin{equation*}
\begin{split}
P_s\big(\E'_k(0)^c \cap B_{4S}\big) &= P_s\big((\E'_k(0)^c \cap B_S) \cup (\E'_k(0)^c \cap B_S(3Se_1))\big)
\\&\leq P_s\big(\E'_k(0)^c \cap B_S\big)+ P_s\big(\E'_k(0)^c \cap B_S(3Se_1)\big) - \frac{2c^2}{(8S)^{n+s}}.
\end{split}
\end{equation*}
Since $\E'_k(0)^c \cap B_S(3Se_1)$ is a translation of $\E_k(0)^c \cap B_S(x_k(2))$, we have prove \eqref{eqn:guadagno}. By combining  \eqref{eqn:non-male} and \eqref{eqn:guadagno} with \eqref{eqn:limit-e-k}, and taking into account that  each $\E_k'$ is a competitor in \eqref{isoperimetric problem fractional existence}, we finally find a contradiction, namely
\[
\g\le\limsup_{k\to \infty} P_s(\E'_k) \leq \gamma - \frac{c^2}{(8S)^{n+s}}\,.\qedhere
\]
\end{proof}

\subsection{Volume-fixing variations}\label{section volumefixing}
 In studying isoperimetric problems with multiple volume constraints one needs to use local diffeomorphic deformations to adjust volumes of competitors. (Scaling is not useful here, as it can just be used to fix the volume of a chamber per time.) This basic technique is found in Almgren's work \cite[VI-10,11,12]{Almgren76}. Here we follow the presentation of \cite[Sections 29.5-29.6]{maggiBOOK}, and discuss the adaptations needed to work in the fractional setting. Given a reference $N$-cluster $\E$, our goal is proving that for every cluster $\E'$ which is sufficiently $L^1$-close to $\E$ and for every volume $m'$ sufficiently close to $m(\E')$ there exists a deformation of $\E'$ with volume $m'$ and perimeter which has increased, at most, proportionally to the small quantity $|m'-m(\E')|$; see Proposition \ref{prop:vol-fix} below.

 The first step to achieve this is proving that, in any ball where the two chambers $\E(i)$ and $\E(j)$ are present, one can build a compactly supported vector field whose flow increases the volume of $\E(i)$ with speed $1$, decreases the volume of $\E(j)$ with speed $-1$, and leaves the volumes of the other chambers infinitesimally unchanged. In the local case this is done in a geometrically explicit way by exploiting the notion of reduced boundary to push $\E(i)$ along its (measure-theoretic) outer unit normal, compare with \cite[Section 29.5]{maggiBOOK}. In the fractional case we are not dealing with sets of finite perimeter, and we thus resort to a more abstract approach, which in fact simplifies the construction. In the following we set
 $$
 V=\big\{\bold a \in \R^{N+1}: \bold a (0)+ ... +\bold a (N) =0\big\}\,.
 $$
%
\begin{lemma}\label{lemma:campo}
 If $\E$ is an $N$-cluster in $\R^n$, $0\le i < j\le N$, and $z\in\pa\E(i)\cap\pa\E(j)$, then for every $R>0$ there exists a vector field $T_{ij} \in C^\infty_c(B_R(z);\R^n)$ such that
$$ \int_{\E(i)} \Div (T_{ij}) \, dx = 1 = - \int_{\E(j)} \Div (T_{ij}) \, dx, \qquad
 \int_{\E(h)} \Div (T_{ij}) \, dx =0 \qquad \forall h \neq i,j.
$$
\end{lemma}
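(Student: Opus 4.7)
My plan is to reduce the construction of $T_{ij}$ to a finite-dimensional linear-algebra problem via duality, which avoids any appeal to reduced boundaries or other regularity of the chambers. Consider the linear map $L\colon C^\infty_c(B_R(z);\R^n) \to \R^{N+1}$ defined by
$$
L(T) = \Big(\int_{\E(h)}\Div T\,dx\Big)_{h=0}^N.
$$
Since $T$ is compactly supported and $\{\E(h)\}_{h=0}^N$ partitions $\R^n$, we have $\sum_{h=0}^N\int_{\E(h)} \Div T\,dx = \int_{\R^n} \Div T\,dx = 0$, so the image of $L$ lies in $V$. Writing $e_h$ for the $h$-th standard basis vector of $\R^{N+1}$, the conclusion of the lemma is exactly that $e_i - e_j \in \mathrm{Image}(L)$.

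Since $V$ is finite-dimensional, $\mathrm{Image}(L)$ is a closed subspace of $V$, so by duality it suffices to show that every $\mathbf{a} \in \R^{N+1}$ annihilating $\mathrm{Image}(L)$ satisfies $\mathbf{a}(i) = \mathbf{a}(j)$. Fix any such $\mathbf{a}$ and set $f = \sum_{h=0}^N \mathbf{a}(h)\,1_{\E(h)} \in L^\infty(\R^n)$. The annihilation identity $\sum_h \mathbf{a}(h) \int_{\E(h)} \Div T\,dx = 0$ rewrites as $\int_{\R^n} f\,\Div T\,dx = 0$ for every $T \in C^\infty_c(B_R(z);\R^n)$, i.e.\ $\nabla f = 0$ in $\mathcal{D}'(B_R(z))$. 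Since $B_R(z)$ is connected, $f$ equals some constant $c$ almost everywhere in $B_R(z)$.

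The function $f$ takes the value $\mathbf{a}(h)$ a.e.\ on $\E(h)$, so the previous step forces $|\E(h) \cap B_R(z)| = 0$ whenever $\mathbf{a}(h) \ne c$. But the hypothesis $z \in \pa \E(i) \cap \pa \E(j)$, via the definition \eqref{boundary of E set}, means precisely that $|\E(i) \cap B_R(z)| > 0$ and $|\E(j) \cap B_R(z)| > 0$, so we must have $\mathbf{a}(i) = \mathbf{a}(j) = c$, which is what we wanted.

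The argument is essentially algebraic and presents no real obstacle: the only analytic input is the standard fact that an $L^\infty$ function on a connected open set with vanishing distributional gradient is a.e.\ constant. The conceptual point is that this duality approach replaces the explicit geometric construction used in the local case (pushing $\E(i)$ along its measure-theoretic outer unit normal, as in \cite[Section~29.5]{maggiBOOK}), which is unavailable here because the chambers need not be sets of finite classical perimeter.
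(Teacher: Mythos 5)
Your proof is correct and is essentially the paper's own argument: the paper likewise works with the linear operator $T\mapsto\big(\int_{\E(h)}\Div T\big)_h$, characterizes its image by duality (every hyperplane containing the image must contain the target), and uses that annihilation forces $\sum_h\lambda_h 1_{\E(h)}$ to have vanishing distributional gradient, hence to be a.e.\ constant on $B_R(z)$, so that the indices of chambers with positive measure in the ball all share the same coefficient. The only cosmetic difference is that the paper first proves the full characterization $I=V'$ of the image and then specializes, whereas you directly show $e_i-e_j$ lies in the image; both rest on the identical key observation.
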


\begin{proof} {\it Step one}: Given $R>0$ and $z\in\R^n$, let $H\subset\{0,...,N\}$ be such that $h\in H$ if and only if $0<|\E(h) \cap B_R(z)|<B_R(z)$.   Let us consider the linear operator $\mathcal L: C^\infty_c(B_R(z); \R^n) \to \R^{N+1}$ defined by
$$
\mathcal L (T) = \Big( \int_{\E(0)} \Div (T) \, dx , \; ... , \; \int_{\E(N)} \Div (T) \, dx \Big)\,,
$$
and consider the linear spaces
\[
I=\Big\{\mathcal L(T):T\in C^\infty_c(B_R(z); \R^n)\Big\}\qquad V'=\big\{\bold a\in V:\bold a(h)=0\quad\forall h\not\in H\big\}\,.
\]
We claim that $I=V'$. Trivially, $I\subset V'$. Since $I$ is the intersection of all the hyperplanes that contain it, it is enough to show that if $J$ is an hyperplane in $\R^{N+1}$ which contains $I$, then $V'\subset J$. Indeed, let $\{\l_h\}_{h=0}^N$ be such that $\bold a\in J$ if and only if $\sum_{h=0}^N\l_h\bold a(h)=0$. The condition $I\subset J$ implies that
$$
0 = \sum_{h\in H} \lambda_h  \int_{\E(h)} \Div (T) \, dx = \int_{\R^n} \Big( \sum_{h\in H} \lambda_h 1_{\E(h)} \Big) \Div (T) \, dx\,,\qquad\forall T \in C^\infty_ c(B_R(z);\R^n)\,,
$$
so that $\sum_{h\in H} \lambda_h 1_{\E(h)}$ is constant in $B_R(z)$. As the chambers $\E(h)$ are disjoint, this means that there exists $\l\in\R$ such that $\l_h=\l$ for every $h\in H$, and thus $V'\subset J$ holds.

\medskip

\noindent {\it Step two}: Now let $z\in\pa\E(i)\cap\pa\E(j)$ for some $0\le i < j\le N$, and given $R>0$ let $H\subset\{0,...,N\}$ be defined as in step one, so that $\{i,j\}\subset H$. Since $I=V'$ and the equations $\bold a(i)=1$, $\bold a(j)=-1$, $\bold a(h)=0$ for $h\ne i,j$ define an element $\bold a\in V'$, we conclude the existence of $T_{ij}\in C^\infty_c(B_R(z);\R^n)$ with the required properties.
\end{proof}

The subsequent step is checking that the flows generated by the vector-fields $T_{ij}$ found in the previous lemma have the required properties. We notice that the constant $C_0$ below depends also on $\|T \|_{C^1}$ (and therefore on our particular cluster), so the dependence on $s$ is not explicit here.

\begin{lemma}[Infinitesimal volume exchange between two chambers]\label{lemma:infinit-vol-fix}
 Let $s\in(0,1)$ and $\E$ be an $N$-cluster in $\R^n$. If $0 \le h<k\le N$, $z\in \partial \E(h) \cap \partial \E(k)$, and $r,\delta>0$, then there exist positive constants $\e_1$, $\e_2$, $C_0$ depending only on $\E, z, r, \delta$,
and a family of diffeomorphisms $\{ f_t\}_{|t|\leq \e_1}$ such that
\begin{equation}
\label{eqn:supp-f-t}
\big\{ x\in \R^n: x \neq f_t(x) \big\} \subset\subset B_r(z), \qquad \forall |t| \le \e_1,
\end{equation}
which satisfies the following properties:
\begin{enumerate}
\item[(i)] if $\E'$ is a cluster, $d(\E, \E') < \e_2$ (in particular, if $\E'=\E$), and $|t| <\e_1$, then
$$
\bigg| \frac{d}{dt} \Big| f_t\big(\E'(h)\big) \cap B_r(z) \Big| -1 \bigg| <\delta, \qquad \bigg| \frac{d}{dt} \Big| f_t\big(\E'(k)\big) \cap B_r(z) \Big| +1 \bigg| <\delta,$$
$$\bigg| \frac{d}{dt} \Big| f_t\big(\E'(i)\big) \cap B_r(z) \Big| \bigg| <\delta \qquad \forall i\neq h,k,$$
$$\bigg| \frac{d^2}{dt^2} \Big| f_t\big(\E'(i)\big) \cap B_r(z) \Big| \bigg| <C_0  \qquad \forall i=0,...,N\,.$$
(notice that $f_t(E) \cap B_r(z) = f_t(E \cap B_r(z))$ for every $E \subset \R^n$).
\item[(ii)]
if $E$ is a set of finite $s$-perimeter and $|t| <\e_1$, then
$$|P_s(f_t(E))- P_s(E)| \leq C_0|t| \,P_s(E).$$
\end{enumerate}
\end{lemma}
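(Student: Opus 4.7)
The idea is to take $T_{hk}\in C^\infty_c(B_{r/2}(z);\R^n)$ provided by Lemma~\ref{lemma:campo} applied with radius $R=r/2$, and to let $f_t$ be the time-$t$ flow of the ODE $\dot\gamma(t)=T_{hk}(\gamma(t))$. Since $T_{hk}$ vanishes outside $B_{r/2}(z)$, the flow fixes points outside $B_{r/2}(z)$ and preserves $B_r(z)$ for small $|t|$, giving \eqref{eqn:supp-f-t}; moreover $\{f_t\}$ is a family of $C^\infty$-diffeomorphisms depending smoothly on $t$. All the constants $\e_1,\e_2,C_0$ below will be produced in terms of $\|T_{hk}\|_{C^2}$, of $r$, and of the size of the $L^1$-ball on which we linearize.

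\textbf{Proof of (i).} By the change of variables $y=f_t(x)$ and the Jacobi identity $\frac{d}{dt}\det Df_t(x)=\Div(T_{hk})(f_t(x))\,\det Df_t(x)$, for every Borel $E\subset\R^n$ and $|t|\le\e_1$ one has
\[
\frac{d}{dt}\,|f_t(E)\cap B_r(z)|\;=\;\int_{f_t(E)}\Div(T_{hk})(y)\,dy\,.
\]
The first-order assertions then follow by writing
\[
\int_{f_t(\E'(i))}\Div(T_{hk})
\;=\;\int_{\E(i)}\Div(T_{hk})
\;+\;\underbrace{\int_{\E'(i)\Delta\E(i)}\Div(T_{hk})}_{\mbox{\scriptsize small if }d(\E,\E')\le\e_2}
\;+\;\underbrace{\int_{f_t(\E'(i))\Delta\E'(i)}\Div(T_{hk})}_{\mbox{\scriptsize small if }|t|\le\e_1},
\]
since $|f_t(A)\Delta A|\le \|T_{hk}\|_\infty|t|\,\H^{n-1}(\pa B_{r/2}(z))$ (or more simply $\le C|t|$ by the $C^1$-closeness of $f_t$ to $\Id$), and since Lemma~\ref{lemma:campo} identifies the values $1,-1,0$ of the leading terms. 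Choosing $\e_1,\e_2$ small enough the three inequalities in the first block of (i) follow. For the second-order bound one differentiates once more: $\frac{d^2}{dt^2}|f_t(E)\cap B_r(z)|=\int_{f_t(E)}(\Div(T_{hk})^2+\nabla\Div(T_{hk})\cdot T_{hk})(y)\,dy$, which is bounded in absolute value by $|B_{r/2}|\,(\|T_{hk}\|_{C^1}^2+\|T_{hk}\|_\infty\|T_{hk}\|_{C^1})=:C_0$, uniformly in $E$ and in $|t|\le\e_1$.

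\textbf{Proof of (ii).} By the change of variables $\tilde x=f_t(x)$, $\tilde y=f_t(y)$, with $J_t(x):=\det Df_t(x)$,
\[
P_s(f_t(E))\;=\;\int_E\int_{E^c}\frac{J_t(x)J_t(y)}{|f_t(x)-f_t(y)|^{n+s}}\,dx\,dy\,.
\]
Standard Gronwall estimates on the ODE $\dot f_t=T_{hk}\circ f_t$ give, for $|t|\le\e_1$,
\[
\bigl|f_t(x)-f_t(y)\bigr|\;=\;|x-y|\,\bigl(1+O(|t|)\bigr),
\qquad J_t(x)\;=\;1+O(|t|)\,,
\]
with $O(|t|)$-terms controlled by $e^{\|DT_{hk}\|_\infty|t|}-1\le C|t|$. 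Hence
\[
\frac{J_t(x)J_t(y)}{|f_t(x)-f_t(y)|^{n+s}}\;=\;\frac{1+O(|t|)}{|x-y|^{n+s}}\,,
\]
with a multiplicative constant that depends only on $n$, $s$ and $\|T_{hk}\|_{C^1}$. Integrating over $E\times E^c$ gives $|P_s(f_t(E))-P_s(E)|\le C_0|t|\,P_s(E)$, after possibly enlarging $C_0$.

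\textbf{Main obstacle.} The only nontrivial point is ensuring that all constants are uniform: the bounds in (i) must hold for \emph{every} $\E'$ with $d(\E,\E')<\e_2$, and those in (ii) must hold for \emph{every} set $E$ of finite $s$-perimeter. This is handled by the two observations above: all the errors we accumulate are controlled by the $C^2$-norm of the single, fixed vector field $T_{hk}$ produced (for $\E$, $z$, $r$) by Lemma~\ref{lemma:campo}, and therefore depend only on $\E$, $z$, $r$ and $\delta$.
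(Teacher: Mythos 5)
Your construction is the same as the paper's in spirit: take the vector field supplied by Lemma~\ref{lemma:campo}, build a one-parameter family of diffeomorphisms from it, and track volumes via the Jacobian and perimeters via the change-of-variables formula for $P_s$. The only structural difference is that you use the flow map of $T_{hk}$, whereas the paper uses the affine deformation $f_t(x)=x+tT(x)$; both work (the paper's choice has the mild advantage that $t\mapsto|f_t(E)\cap B_r(z)|=\int_{E\cap B_r(z)}\det(I+tDT)\,dx$ is a polynomial in $t$, making the uniform bound on the second derivative immediate). Your treatment of (ii) is correct and essentially identical to the paper's.

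There is, however, a genuine flaw in your argument for the first block of (i). You control the error term by claiming $|f_t(A)\Delta A|\le C|t|$ for every measurable set $A$, citing the $C^1$-closeness of $f_t$ to the identity. This is false: for a generic Borel set $A$ the map $t\mapsto|f_t(A)\Delta A|$ is continuous and vanishes at $t=0$ but has no Lipschitz modulus (take $A$ a fat-Cantor-type set and $T_{hk}$ nearly constant on a neighborhood of $A$; then $|f_t(A)\Delta A|$ behaves like the translation modulus of $1_A$ in $L^1$, which can decay arbitrarily slowly). The displayed bound involving $\H^{n-1}(\partial B_{r/2}(z))$ has no justification for general $A$ either. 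Fortunately, what you actually need is not that the symmetric difference is small but that
\[
\Big|\int_{f_t(A)}\Div T_{hk}-\int_{A}\Div T_{hk}\Big|
=\Big|\int_{A}\big[\Div T_{hk}(f_t(x))J_t(x)-\Div T_{hk}(x)\big]\,dx\Big|
\le C\,|t|\,,
\]
which does hold uniformly in $A$, with $C$ depending only on $\|T_{hk}\|_{C^2}$ and $|B_{r}(z)|$, because the integrand is supported in $B_{r/2}(z)$ and is pointwise $O(|t|)$ by the smoothness of $T_{hk}$ and of the flow. Alternatively (and this is what the paper does), you can avoid the issue entirely: you have already proved that $\big|\tfrac{d^2}{dt^2}|f_t(E)\cap B_r(z)|\big|\le C_0$ uniformly, and at $t=0$ the derivative equals $\int_{\E'(i)}\Div T_{hk}$, which differs from $\int_{\E(i)}\Div T_{hk}\in\{1,-1,0\}$ by at most $\|\Div T_{hk}\|_\infty\,d(\E,\E')$. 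Combining the two gives the first block of (i) with $\e_1,\e_2$ small. Either repair is short, but the step as written is not correct. Also note a small typo: your bound for $C_0$ should involve $\|T_{hk}\|_{C^2}$ rather than $\|T_{hk}\|_{C^1}$, since $\nabla\Div T_{hk}$ appears.
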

\begin{proof} Given $z\in \partial \E(h) \cap \partial \E(k)$ and $r>0$, let $T \in C^\infty_c(B_r(z))$ be the vector field given by Lemma~\ref{lemma:campo}, which satisfies
\begin{equation}
\label{eqn:lemma-div-appl}
 \int_{\E(h)} \Div (T) \, dx = 1 = - \int_{\E(k)} \Div (T) \, dx, \qquad
 \int_{\E(i)} \Div (T) \, dx =0 \quad \forall i \neq h,k.
\end{equation}
For every $t\in (0,1)$ we define $f_t(x) = x+ tT(x)$, $x\in \R^n$. Since $f_0(x) = x$ and ${\rm spt}\,  T \subset B_r(z)$, there exists $\e_1>0$ such that $\{ f_t\}_{|t|\leq \e_1}$ is a family of diffeomorphisms satisfying \eqref{eqn:supp-f-t}. By the area formula, for every Borel set $E\subset\R^n$
$$\big| f_t(E) \cap B_r(z)\big| = \int_{E \cap B_r(z)} Jf_t(x) \, dx.$$
Noticing that $ Jf_t(x) =1+ t {\rm div} T(x) + O(t)$, we deduce that
$$\frac{d}{dt}\Big | _{t=0} \big| f_t(E) \cap B_r(z)\big| = \int_{E \cap B_r(z)} {\rm div}\, T(x)  \, dx$$
and statement (i) follows, possibly further reducing the value of $\e_1$, by \eqref{eqn:lemma-div-appl} and by the fact that $t \to \big| f_t(E) \cap B_r(z)\big|$ is a smooth function when $t$ is small.
By the change of variable formula we have also that
$$P_s( f_t(E))= \int_{E}\int_{E^c} \frac{Jf_t(x)Jf_t(y)}{|f_t(x)-f_t(y)|^{n+s}} \, dx\, dy.$$
Since $Jf_t(x) J_t(y) = 1+ t ( {\rm div}T(x)+{\rm div}T(y)) + o(t)$  there exist $C>0$ depending on $n$ and $T$ only such that $$| Jf_t(x) J_t(y)-1 | \leq C\,|t|;$$
moreover, up to considering larger values of $C$, we have
$$  \frac{1}{|f_t(x)-f_t(y)|^{n+s}} \leq  \frac{1}{(|x-y| - |t||T(x)-T(y)|)^{n+s}} \leq  \frac{1}{|x-y|^{n+s}(1 - |t|\|\nabla T\|_{L^\infty})^{n+s}}
 \leq  \frac{1+C|t|}{|x-y|^{n+s}}
$$
$$  \frac{1}{|f_t(x)-f_t(y)|^{n+s}} \geq  \frac{1}{(|x-y| + |t||T(x)-T(y)|)^{n+s}} \geq  \frac{1}{|x-y|^{n+s}(1 + |t|\|\nabla T\|_{L^\infty})^{n+s}}
 \geq  \frac{1-C|t|}{|x-y|^{n+s}} ,
$$
so that
$$\Big|  \frac{1}{|f_t(x)-f_t(y)|^{n+s}} - \frac{1}{|x-y|^{n+s}} \Big| \leq  \frac{C\,|t|}{|x-y|^{n+s}}
$$
for $t$ small enough. Hence, up to reducing $\e_1$ we deduce that
\begin{equation*}
\begin{split}
|&P_s( f_t(E))- P_s(E)| \leq  \int_{E}\int_{E^c} \Big| \frac{Jf_t(x)Jf_t(y)}{|f_t(x)-f_t(y)|^{n+s}} - \frac{1}{|x-y|^{n+s}} \Big| \, dx\, dy\\
&\leq \int_{E}\int_{E^c} \Big| \frac{Jf_t(x)Jf_t(y)}{|f_t(x)-f_t(y)|^{n+s}} - \frac{Jf_t(x)Jf_t(y)}{|x-y|^{n+s}} \Big| \, dx\, dy+ \int_{E}\int_{E^c} \Big| \frac{Jf_t(x)Jf_t(y)}{|x-y|^{n+s}} - \frac{1}{|x-y|^{n+s}} \Big| \, dx\, dy\\
& \leq C\,|t| \int_{E}\int_{E^c} \frac{1}{|x-y|^{n+s}} \, dx\, dy,
\end{split}
\end{equation*}
which proves statement (ii).
\end{proof}

Lemma \ref{lemma:infinit-vol-fix} gives us a way to exchange volume between the chambers $\E(h)$ and $\E(k)$ at a point $z\in\pa\E(h)\cap\pa\E(k)$, without significantly change the volume of other chambers. The next step is choosing where to pick the points $z$ so to have enough freedom to achieve any small volume adjustment. To this end we introduce the following terminology: $\E(h)$ and $\E(k)$ are {\it neighboring chambers} if $\H^{n-1}(\partial \E(h) \cap \partial \E(k)) >0$.
Let $S$ be the set of the indexes corresponding to neighboring chambers of $\E$,
$$
S= \Big\{(h,k) \in \{ 0,..., N\}^2: h<k, \; \H^{n-1}(\partial \E(h) \cap \partial \E(k)) >0 \Big\}\,,
$$
let $M \in \{N,..., 2N^2\}$ be the cardinality of $S$, and let $\phi = (\phi^1, \phi^2): \{1,..., M\} \to S$ be a bijection (so that $\phi$ is an enumeration of $S$). A finite family of distinct points $\{z_\alpha\}_{\a = 1,...,M}$ is a {\it system of interface points of $\E$} if for every $\alpha \in \{1,..., M\}$ we have that $z_\alpha \in \partial \E( \phi^1(\alpha) ) \cap \partial \E( \phi^2 (\alpha))$. The following lemma states the existence of a system of interface points of $\E$ and shows that a certain matrix, which keeps into account the links between different chambers, has rank $N$.

\begin{lemma}\label{lemma:linkedchambers}
 (i)  If $\E$ is an $N$-cluster in $\R^n$ and $M$ and $\phi$ are as above, then the
 matrix $L =(L_{j\a})_{j=0,...N,\, \a=1,...,M} \in \R^{(N+1) \times M}$ defined as
 $$
 L_{j\a} =
 \begin{cases}
 1 &\mbox{if } j= \phi^1(\a),
 \\
 -1 &\mbox{if } j= \phi^2(\a),
 \\
 0 &\mbox{if } j\neq \phi^1(\a), \phi^2(\a),
 \end{cases}
 \qquad 1\leq \a\leq M
 $$
 has rank $N$.

 \medskip

 \noindent (ii) If $\de>0$ and $A$ is an open set in $\R^n$ such that for every $h=0,...,N$ there exists a connected component $A'$ of $A$ with $|\E(0)\cap A'|>0$ and $|\E(h)\cap A'|>0$, then there exists systems of interface points $\{z_\alpha\}_{\a = 1,...,M}\subset A$ and $\{y_\alpha\}_{\a = 1,...,M}\subset A$ with $|z_\a-y_\b|>\de$ for every $\a,\b=1,...,M$.
\end{lemma}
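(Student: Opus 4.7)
I would identify $L$ as the signed incidence matrix of the graph $G$ on vertex set $\{0,\ldots,N\}$ whose edges are the unordered pairs $\{\phi^1(\alpha),\phi^2(\alpha)\}$, $\alpha=1,\ldots,M$. The left null space of such a matrix consists precisely of vectors constant on each connected component of $G$, so $\mathrm{rank}(L)=N+1-c(G)$, and it suffices to show that $G$ is connected. I argue by contradiction: split $\{0,\ldots,N\}=H_1\sqcup H_2$ with $H_1,H_2$ nonempty and no edge between, set $U=\bigcup_{h\in H_1}\E(h)$ and $V=\bigcup_{k\in H_2}\E(k)$, and observe that any $x\in\partial U$ must, via a pigeonhole along $r_n\downarrow 0$, belong to $\partial\E(h)\cap\partial\E(k)$ for some $h\in H_1,k\in H_2$; by the no-edge assumption, every such intersection is $\H^{n-1}$-null, so $\partial U$ is a closed $\H^{n-1}$-null set. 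Since $n\geq 2$, such a set cannot disconnect $\R^n$; but $\R^n\setminus\partial U$ decomposes into the disjoint open sets $X_0=\{x:|U\cap B_r(x)|=0\ \text{for some}\ r>0\}$ and $X_1=\{x:|U^c\cap B_r(x)|=0\ \text{for some}\ r>0\}$, both nonempty since $|U|,|V|>0$, contradicting connectedness.

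\textbf{Plan for (ii).} The crux is to prove, for each $(h,k)\in S$, that $\partial\E(h)\cap\partial\E(k)\cap A$ is uncountable; granted that, the two systems are picked greedily by enumerating the $2M$ required points and, at each step, excluding the at most $2M-1$ open balls of radius $\delta$ around previously chosen points---leaving a still-uncountable pool from which to pick. To obtain the needed uncountability in $A$, I first handle the case $h=0$: the hypothesis supplies a component $A'$ of $A$ with $|\E(0)\cap A'|,|\E(k)\cap A'|>0$, and rerunning the boundary/non-separation argument of part (i) inside $A'$ (applied to $\E(0)\cap A'$ and its complement in $A'$) yields positive $\H^{n-1}$-measure of $\partial\E(0)\cap A'$; pigeonholing over chambers $j\ne 0$ then locates a positive-$\H^{n-1}$-measure interface of $(0,k)$-type in $A'$. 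For general $(h,k)\in S$ with both $h,k\ne 0$, I combine the components supplied by the hypothesis for $h$ and for $k$ via $\E(0)$, exploiting the global positivity $\H^{n-1}(\partial\E(h)\cap\partial\E(k))>0$ and the graph connectedness from (i) to transfer interface density along the adjacency graph until a positive-measure piece of $\partial\E(h)\cap\partial\E(k)$ is exhibited inside $A$.

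\textbf{Main obstacle.} The delicate step is this last transfer: the hypothesis only refers to pairs of the form $(0,h)$, whereas $S$ may contain pairs $(h,k)$ of nonzero chambers whose common boundary could a priori avoid $A$. Propagating interface density from $(0,\cdot)$-pairs to arbitrary neighboring pairs relies on the graph connectedness established in (i), together with the non-separation property of closed $\H^{n-1}$-null sets in $\R^n$ for $n\ge 2$; everything else---part (i) itself and the greedy point selection in (ii)---is routine once interfaces are known to live densely inside $A$.
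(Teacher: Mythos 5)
Part (i) of your proposal is correct and is the standard argument. Identifying $L$ with the signed incidence matrix of the graph $G$ with edge set $S$ reduces the rank assertion to connectedness of $G$, and your contradiction argument---if $\{0,\dots,N\}=H_1\sqcup H_2$ with no edges across, then $U=\bigcup_{h\in H_1}\E(h)$ has a closed, $\H^{n-1}$-null boundary $\partial U\subset\bigcup_{h\in H_1,k\in H_2}(\partial\E(h)\cap\partial\E(k))$ (your pigeonhole step) which nonetheless separates $\R^n$ into the two nonempty open sets of density $0$ and density $1$ for $U$---is sound, given the classical fact that a closed $\H^{n-1}$-null set cannot disconnect $\R^n$ for $n\geq 2$. (The paper only cites Maggi's book here, so there is no written proof in the paper to compare against.)

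In (ii), the step you flag as the ``main obstacle'' is not merely delicate---it fails. Your plan is to prove that $\partial\E(h)\cap\partial\E(k)\cap A$ is uncountable for \emph{every} $(h,k)\in S$ and then pick points greedily, but the hypothesis only localizes each chamber relative to $\E(0)$, and a pair of nonzero chambers need not meet inside $A$ at all. Take $n=2$, $N=2$, $\E(1)=[0,1]^2$, $\E(2)=[1,2]\times[0,1]$, $A=B_{1/5}((0,\tfrac12))\cup B_{1/5}((2,\tfrac12))$: the hypothesis holds (each component of $A$ sees $\E(0)$ together with one of $\E(1),\E(2)$), and $(1,2)\in S$ since $\H^1(\partial\E(1)\cap\partial\E(2))=1$, yet $\partial\E(1)\cap\partial\E(2)=\{1\}\times[0,1]$ is disjoint from $A$. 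No ``transfer of interface density along the adjacency graph'' can fix this, because the interface of a fixed pair is a fixed closed set which $A$ may simply miss. What the volume-fixing application actually needs, and what can be proved, is weaker: interface points inside $A$ for a subfamily $S'\subset S$ whose columns of $L$ already have rank $N$. The route is to run the argument of (i) \emph{inside each connected component} $A'$ of $A$: the chambers with positive measure in $A'$ are pairwise linked, within $A'$, by interfaces of positive $\H^{n-1}(\cdot\cap A')$-measure (same non-separation argument, now applied in the connected open set $A'$), and the standing hypothesis then links every $h$ to the common vertex $0$ through some component, producing a connected spanning subgraph whose edges are all realized inside $A$. You should therefore read ``system of interface points'' in (ii) as relative to $A$, not to the global enumeration $\phi$ of $S$. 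Your greedy $\delta$-separation is also incomplete as written---a positive-$\H^{n-1}$-measure interface may have diameter below $\delta$, so uncountability alone does not let you escape finitely many $\delta$-balls---but this is secondary to the transfer issue.
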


\begin{proof}
 See \cite[Proof of Theorem 29.14, Step 1]{maggiBOOK}.
\end{proof}

By combining the previous lemma we obtain the following proposition on volume-fixing variations.

\begin{proposition}[Volume-fixing variations]\label{prop:vol-fix}
 Let $s\in(0,1)$, $\E$ be an $N$-cluster with $0<|\E(h)|<\infty$ for every $h=1,...,N$,  $\{z_\alpha\}_{\a = 1,...,M}$ be a system of interface points of $\E$, and let $0<r<\min\{|z_\a-z_\b|/4:1\le\a<\b\le M\}$.

 Then there exist positive constants $\eta, \e_1,\e_2, C$ ($s$, $\E$, $\{z_\a\}_{\a =1,...,M}$ and $r$) with the following property:
 for every $N$-cluster $\E'$ with $d(\E, \E') < \e_2$ there exists a $C^1$-function
 $$\Phi : ((-\eta, \eta)^{N+1} \cap V) \times \R^n \to \R^n$$
  such that
 \begin{enumerate}
\item[(i)] if $\bold a \in (-\eta,\eta)^{N+1}\cap V$ then $\Phi(\bold a, \cdot): \R^n \to \R^n$ is a diffeomorphism with 
$$\{ x\in \R^n : \Phi(\bold a,x) \neq x\} \subset  \bigcup_{\a=1}^M B_r(z_\a) \cc \R^n$$
\item[(ii)] if $\bold a \in (-\eta,\eta)^{N+1}\cap V$ then for $0\leq h\leq N$
$$\Big| \Phi(\bold a, \E'(h)) \cap \{ x\in \R^n : \Phi(\bold a,x) \neq x\}\Big| = \Big| \E'(h) \cap \{ x\in \R^n : \Phi(\bold a,x) \neq x\}\Big| + \bold a (h);$$
\item[(iii)] if $\bold a \in (-\eta, \eta)^{N+1} \cap V$ and $F$ is a set of finite $s$-perimeter, then
$$|P_s(\Phi(\bold a , F))- P_s(F)| \leq CP_s(F) \sum_{h=0}^N |\bold a (h)|.$$
\end{enumerate}
\end{proposition}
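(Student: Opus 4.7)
The strategy is to superimpose the local exchange maps produced by Lemma \ref{lemma:infinit-vol-fix} at each interface point and then invert the resulting volume-variation map by the implicit function theorem, uniformly in $\E'$. Fix $\delta>0$ to be chosen small, and for each $\alpha=1,\dots,M$ apply Lemma \ref{lemma:infinit-vol-fix} at $z_\alpha\in\pa\E(\phi^1(\alpha))\cap\pa\E(\phi^2(\alpha))$ with radius $r$ and tolerance $\delta$, obtaining constants $\e_1^{(\alpha)},\e_2^{(\alpha)},C_0^{(\alpha)}$ and families of diffeomorphisms $\{f_t^\alpha\}_{|t|\le\e_1^{(\alpha)}}$ with support in $B_r(z_\alpha)$. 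Because $4r<|z_\alpha-z_\beta|$ for $\alpha\ne\beta$, the balls $B_r(z_\alpha)$ are pairwise disjoint and we can define, for $t=(t_1,\dots,t_M)\in(-\e_1,\e_1)^M$ with $\e_1:=\min_\alpha\e_1^{(\alpha)}$, the global diffeomorphism
\[
\Phi^t(x)=f^1_{t_1}\circ\cdots\circ f^M_{t_M}(x)=
\begin{cases} f^\alpha_{t_\alpha}(x) & x\in B_r(z_\alpha),\\ x & x\notin\bigcup_\alpha B_r(z_\alpha).\end{cases}
\]
Set $U=\bigcup_{\alpha=1}^M B_r(z_\alpha)$ and define the volume-variation map $G(\cdot;\E'):(-\e_1,\e_1)^M\to\R^{N+1}$ by
\[
G_h(t;\E')=\big|\Phi^t(\E'(h))\cap U\big|-\big|\E'(h)\cap U\big|,\qquad h=0,\dots,N.
\]
Because $\Phi^t$ is a diffeomorphism preserving $U$, the sum over $h$ of $|\Phi^t(\E'(h))\cap U|$ equals $|U|$, hence $G(t;\E')\in V$ for every admissible $t$ and every cluster $\E'$.

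The next step is a rank computation. By Lemma \ref{lemma:infinit-vol-fix}(i), the partial derivative $\partial_{t_\alpha}G_h(0;\E)$ differs from $L_{h\alpha}$ by at most $\delta$, where $L$ is the matrix of Lemma \ref{lemma:linkedchambers}(i). Since $L$ has rank $N$ as a map $\R^M\to V$, choosing $\delta$ small enough makes $D_tG(0;\E):\R^M\to V$ surjective; pick an $N$-dimensional subspace $W\subset\R^M$ on which $D_tG(0;\E)|_W$ is a linear isomorphism onto $V$. Moreover, for clusters $\E'$ with $d(\E,\E')<\e_2$, one has $|G(t;\E')-G(t;\E)|\le C\,d(\E,\E')$ uniformly in $t$, and the same bound holds for the partial derivatives in $t$, so by shrinking $\e_2$ we may assume $D_tG(0;\E')|_W$ remains an isomorphism onto $V$ with uniformly bounded inverse. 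The implicit function theorem, applied with $\bold a\in V$ as the target and $\E'$ as a parameter, then yields $\eta>0$ and a $C^1$ map $\bold a\mapsto t(\bold a;\E')\in W\cap(-\e_1,\e_1)^M$ with $t(0;\E')=0$, $G(t(\bold a;\E');\E')=\bold a$ for every $\bold a\in(-\eta,\eta)^{N+1}\cap V$, and with $|t(\bold a;\E')|\le C\sum_{h=0}^N|\bold a(h)|$.

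Define $\Phi(\bold a,x):=\Phi^{t(\bold a;\E')}(x)$. Property (i) is immediate from the support of $\Phi^t$. Property (ii) holds because the sets $\{x\ne\Phi(\bold a,x)\}$ and $U$ differ negligibly (one can in fact replace the inclusion by $\{x\ne\Phi(\bold a,x)\}\subset U$) and by construction of $G$. For property (iii), we apply Lemma \ref{lemma:infinit-vol-fix}(ii) iteratively to each factor $f^\alpha_{t_\alpha}$, observing that at each step the transformed set has $s$-perimeter comparable to $P_s(F)$ (since the multiplicative factor is $1+O(|t_\alpha|)$); this produces the estimate
\[
|P_s(\Phi(\bold a,F))-P_s(F)|\le C\,P_s(F)\sum_{\alpha=1}^M|t_\alpha|\le C\,P_s(F)\sum_{h=0}^N|\bold a(h)|,
\]
with a constant $C$ depending only on $s$, $\E$, the interface points and $r$.

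The main technical obstacle is the uniformity in $\E'$ of the implicit function argument: the map $G$ is $C^1$ in $t$ but only continuous (in fact Lipschitz in $L^1$ distance) in $\E'$. One must therefore use a version of the implicit function theorem with parameters, relying on the uniform surjectivity of $D_tG(0;\E')$ obtained by an $L^1$-continuity estimate of the volume derivatives $\int_{\E'(h)}\Div T_\alpha$. The other subtlety is verifying that the quantitative loss in perimeter (iii) can be traced back to $\sum_h|\bold a(h)|$ rather than to $\sum_\alpha|t_\alpha|$, which is precisely the content of the Lipschitz bound $|t(\bold a;\E')|\le C|\bold a|$ furnished by the uniform invertibility of $D_tG(0;\E')|_W$.
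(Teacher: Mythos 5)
Your proposal is correct and follows essentially the same route as the paper: compose the local exchange diffeomorphisms from Lemma \ref{lemma:infinit-vol-fix} on the disjoint balls $B_r(z_\alpha)$, use Lemma \ref{lemma:linkedchambers}(i) to see that the Jacobian of the volume-variation map is within $O(\delta)$ of the rank-$N$ matrix $L$, and then invert by the implicit function theorem with a quantitative Lipschitz bound $|t(\bold a;\E')|\le C|\bold a|$; property (iii) is obtained by telescoping the perimeter estimate of Lemma \ref{lemma:infinit-vol-fix}(ii) along the factors. The only presentational differences are that you make explicit the choice of an $N$-dimensional subspace $W\subset\R^M$ on which $D_tG(0;\E)$ is injective and the parameter-dependent IFT, whereas the paper phrases this through a lower bound $\nabla\psi(0)e\ge\kappa|e|$ on $\ker\nabla\psi(0)^\perp$ and invokes a quantitative IFT as in Maggi's book; these are the same device.
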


\begin{proof} Given Lemma \ref{lemma:infinit-vol-fix} and Lemma \ref{lemma:linkedchambers} the proof is basically the same as in \cite[Proof of Theorem 29.14]{maggiBOOK}, so we just give a sketch for the sake of clarity.
By Lemma~\ref{lemma:infinit-vol-fix} given positive constants $\de$ and $r$, there exist positive constants $\e_1, \e_2, C_0$ (depending on $\E$, $r$, $\de$ and $\{z_\a\}_{\a=1}^M$) and diffeomorphisms $\{f^\a_t\}_{\a=1,...,M\,,|t|<\e_1}$ such that
\begin{equation}
\label{eqn:supp-f-t-appl}
\big\{ x\in \R^n: x \neq f^\a_t(x) \big\} \subset\subset B_r(z_\a), \qquad \forall |t| \le \e_1,\a=1,...,M\,,
\end{equation}
and, if $\E'$ is a cluster with $d(\E, \E') < \e_2$, $|t| <\e_1$, $\a=1,...,M$, and $(h,k)=(\phi^1(\a),\phi^2(\a))$, then
\begin{equation}
\label{eqn:vol-fix-appl1}\Big| \frac{d}{dt} \Big| f^\a_t\big(\E'(h)\big) \cap B_r(z_\a) \Big| -1 \Big| <\delta, \qquad \Big| \frac{d}{dt} \Big| f^\a_t\big(\E'(k)\big) \cap B_r(z_\a) \Big| +1 \Big| <\delta,
\end{equation}
\begin{equation}
\label{eqn:vol-fix-appl2}\Big| \frac{d}{dt} \Big| f^\a_t\big(\E'(i)\big) \cap B_r(z_\a) \Big| \Big| <\delta \qquad \mbox{for } i\neq h,k,
\end{equation}
\begin{equation}
\label{eqn:vol-fix-appl3}
\Big| \frac{d^2}{dt^2} \Big| f^\a_t\big(\E'(i)\big) \cap B_r(z_\a) \Big| \Big| <C_0  \qquad \mbox{for } 0 \leq i\leq N
\end{equation}
and such that, whenever $E$ is a set of finite $s$-perimeter,
\begin{equation}
\label{eqn:vol-fix-appl4}|P_s(f^\a_t(E))- P_s(E)| \leq C_0|t| \,P_s(E)\,.
\end{equation}
Since $r<\min\{|z_\a-z_\b|/4:1\le\a<\b\le M\}$, if we define $\Psi: (-\e_1,\e_1)^M \times \R^n \to \R^n$ by setting
$$
\Psi(\bold t, x) = (f^1_{t_1} \circ f^2_{t_2} \circ ... \circ f^M_{t_M})(x) , \qquad (\bold t,x) \in (-\e_1,\e_1)^M \times \R^n\,,
$$
then $\Psi(\bold t,\cdot)$ is a diffeomorphisms with $\{\Psi(\bold t,\cdot)\ne\Id\}$ compactly contained in the union of the disjoint balls $\{B_r(z_\a)\}_{\a=1}^M$. We claim the existence of $\eta>0$ and $\zeta:(-\eta,\eta)^{N+1} \cap V \to \R^M$ such that
\begin{equation}
\label{eqn:def-Phi}
\Phi(\bold a, x)=\Psi(\zeta(\bold a), x) \qquad (\bold a,x) \in ((\eta,\eta)^{N+1} \cap V) \times \R^n,
\end{equation}
satisfies all the required properties. To this end, we consider first the function $\psi: (-\e_1,\e_1)^M \to V \subseteq \R^{N+1}$ defined by setting, for every $h=0,...,N$ and $\bold t\in (-\e_1,\e_1)^M$,
\begin{equation}
\begin{split}
\psi_h(\bold t) &= \big| \Psi(\bold t, \E'(h)) \cap \big\{ x\in \R^n: x \neq \Psi(\bold t, x) \big\} \big| - \big| \E'(h) \cap \big\{ x\in \R^n: x \neq \Psi(\bold t, x) \big\} \big|\\
&= \sum_{\a=1}^M \big| f^\a_{t_\a}\big(\E'(h)\big) \cap B_r(z_\a) \big| -\big| \E'(h) \cap B_r(z_\a) \big|.
\end{split}
\end{equation}
By \eqref{eqn:vol-fix-appl1}, \eqref{eqn:vol-fix-appl2}, \eqref{eqn:vol-fix-appl3}, we see that $\psi(0) = 0$, $|\nabla ^2 \psi(\bold t ) | \leq C_0$ for every $\bold t \in (-\e_1, \e_1)^M$, with $|\partial_\a \psi_h(0) - L_{h\a}|\le C(N,M)\,\de$ for every $h=0,...,N$ and $\a=1,...,M$. Since the rank of $(L_{h\a})_{h,\a}$ is $N$ (Lemma \ref{lemma:linkedchambers}), by arguing as in \cite[Proof of Theorem 29.14, Step 3]{maggiBOOK} we find that provided $\de$ is small enough then there exists $\kappa>0$ such that $\nabla \psi(0) e \geq \kappa |e|$ for every $e \in \ker \nabla \psi(0)^\perp$.
By the implicit function theorem (with the same statement as in \cite[Proof of Theorem 29.14, Step 2]{maggiBOOK} for having a quantitative dependence of $\eta$ on $\E$ and $\e_2$ but not on $\E'$) we deduce that there exists a class $C^2$ function $\zeta: (-\eta,\eta)^{N+1} \cap V \to \R^M$ such that
$$\psi(\zeta(\bold a )) = \bold a, \qquad |\zeta(\bold a)| \leq \frac{2}{\kappa} |\bold a |.$$
With this definition at hand, it is clear that $\Phi$ defined in \eqref{eqn:def-Phi} satisfies (i). Thanks to the definition of $\zeta$ and $\psi$, it satisfies also (ii). We are left to check (iii), which requires a computation specific to the fractional setting. If $\bold a \in (-\eta, \eta)^{N+1} \cap V$ and $F$ is a set of finite $s$-perimeter, then we have
\begin{equation}
\label{eqn:per-cons}
\begin{split}
&|P_s(\Phi(\bold a , F))- P_s(F)| =
|P_s((f^1_{\zeta_1(\bold a)} \circ ... \circ f^M_{\zeta_M(\bold a)})(F) )- P_s(F)|\\
&=\sum_{\a=1}^{M-1}|P_s((f^\a_{\zeta_\a(\bold a)} \circ ... \circ f^M_{\zeta_M(\bold a)})(F))- P_s((f^{\a+1}_{\zeta_{\a+1}(\bold a)} \circ ... \circ f^M_{\zeta_M(\bold a)})(F))| + |P_s(f^M_{\zeta_M(\bold a)}(F))- P_s(F)|.
\end{split}
\end{equation}
By \eqref{eqn:vol-fix-appl4}, we deduce that for every $\a=1,...,M-1$
\begin{equation}
\label{eqn:cons-per}
|P_s((f^\a_{\zeta_\a(\bold a)} \circ ... \circ f^M_{\zeta_M(\bold a)})(F))- P_s((f^{\a+1}_{\zeta_{\a+1}(\bold a)} \circ ... \circ f^M_{\zeta_M(\bold a)})(F))| \leq C_0 |\zeta_\a(\bold a)| P_s((f^{\a+1}_{\zeta_{\a+1}(\bold a)} \circ ... \circ f^M_{\zeta_M(\bold a)})(F))
\end{equation}
and similarly
$$ |P_s(f^M_{\zeta_M(\bold a)}(F))- P_s(F)| \leq  C_0|\zeta_M(\bold a)| P_s(F).$$
In particular, for every $\a=1,...,M-1$, since $|\zeta_\a(\bold a)| \leq \e_1 \leq 1$, we obtain
\begin{equation*}
\begin{split}
&P_s((f^\a_{\zeta_\a(\bold a)} \circ ... \circ f^M_{\zeta_M(\bold a)})(F)) \\&\leq P_s((f^{\a+1}_{\zeta_{\a+1}(\bold a)} \circ ... \circ f^M_{\zeta_M(\bold a)})(F))+ |P_s((f^\a_{\zeta_\a(\bold a)} \circ ... \circ f^M_{\zeta_M(\bold a)})(F))- P_s((f^{\a+1}_{\zeta_{\a+1}(\bold a)} \circ ... \circ f^M_{\zeta_M(\bold a)})(F))|
\\
&\leq (1+C_0) P_s((f^{\a+1}_{\zeta_{\a+1}(\bold a)} \circ ... \circ f^M_{\zeta_M(\bold a)})(F))
\end{split}
\end{equation*}
and
$$P_s(f^M_{\zeta_M(\bold a)}(F))\leq  (1+C_0)P_s(F);$$
an easy induction shows then that
\begin{equation}
\label{eqn:cons-per2}
P_s((f^\a_{\zeta_\a(\bold a)} \circ ... \circ f^M_{\zeta_M(\bold a)})(F))
\leq (1+C_0)^M P_s(F).
\end{equation}
By \eqref{eqn:per-cons}, \eqref{eqn:cons-per}, and \eqref{eqn:cons-per2}, we deduce that
\begin{equation}
\label{eqn:fine-vol-fix}
\begin{split}
|P_s(\Phi(\bold a , F))- P_s(F)|
&=C_0 \sum_{\a=1}^{M-1} |\zeta_\a(\bold a)| P_s((f^{\a+1}_{\zeta_{\a+1}(\bold a)} \circ ... \circ f^M_{\zeta_M(\bold a)})(F)) +C_0 |\zeta_M(\bold a)| P_s(F)
\\
&\leq (1+C_0)^{M+1}P_s(F) \sum_{\a=1}^{M} |\zeta_\a(\bold a)|\leq \frac{2M^{1/2}(1+C_0)^{M+1}}{\kappa}P_s(F) \sum_{h=0}^N |\bold a (h)|,
\end{split}
\end{equation}
so that also (iii) is satisfied.
\end{proof}

In the local case Proposition \ref{prop:vol-fix} would be sufficient for showing that isoperimetric clusters are locally almost-minimizing perimeter (a key step in the regularity theory) and for modifying minimizing sequences in the existence argument. In the fractional case, the latter application will need the following version of Proposition \ref{prop:vol-fix}.

\begin{proposition}[Volume-fixing variations of a minimizing sequence]\label{prop:vol-fix-k}
 Let $s\in(0,1)$, $m\in\R^N_+$, $\{\E_k\}_{k\in \N}$ be a sequence of $N$-clusters with $m(\E_k) = m$ for every $k\in\N$, and define $S>0$ by setting
 \[
 \omega_n S^n = 2(m(1)+...+m(N))\,.
 \]
 Finally, let us assume that there exist $c_0>0$ and sequences $\{x_k(1)\}_{k\in \N},...,\{x_k(N)\}_{k\in \N}$ such that
 \begin{equation}
 \label{hp:some-mass}
 | \E_k(h) \cap B_S(x_k(h))| \geq c_0 \qquad \mbox{for every } k\in \N \mbox{ and } h=1,...,N.
 \end{equation}
 Then there exist positive constants $\eta, C$ such that for every $k\in \N$ (up to a not relabeled subsequence) there exists a $C^1$-function
 $$\Phi_k : ((-\eta, \eta)^N \cap V) \times \R^n \to \R^n$$
  such that
 \begin{enumerate}
\item[(i)] if $\bold a \in (-\eta,\eta)^{N+1}\cap V$ then $\Phi_k(\bold a, \cdot): \R^n \to \R^n$ is a diffeomorphism with 
$$\{ x\in \R^n : \Phi_k(\bold a,x) \neq x\} \subset  \bigcup_{h=0}^N B_S(x_k(h)) \cc \R^n$$
\item[(ii)] if $\bold a \in (-\eta,\eta)^{N+1}\cap V$ then for $0\leq h\leq N$
$$\Big| \Phi_k(\bold a, \E_k(h)) \cap \{ x\in \R^n : \Phi_k(\bold a,x) \neq x\}\Big| = \Big| \E_k(h) \cap \{ x\in \R^n : \Phi_k(\bold a,x) \neq x\}\Big| + \bold a (h);$$
\item[(iii)] if $\bold a \in (-\eta, \eta)^{N+1} \cap V$ and $F$ is a set of finite $s$-perimeter, then
$$|P_s(\Phi_k(\bold a , F))- P_s(F)| \leq CP_s(F) \sum_{h=0}^N |\bold a (h)|.$$
\end{enumerate}
\end{proposition}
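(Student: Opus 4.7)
The plan is to imitate the construction in the proof of Proposition~\ref{prop:vol-fix}, playing the role of the ``system of interface points'' with the centers $\{x_k(h)\}_{h=1}^N$ and obtaining the uniform constants in $k$ via a compactness argument on translated clusters. The starting point is the observation that the prescription $\omega_n S^n=2\sum_j m(j)$ together with $m(\E_k)=m$ forces
\[
|\E_k(0)\cap B_S(x_k(h))|\ge |B_S|-\sum_{j=1}^N m(j)=\tfrac{|B_S|}{2}>0\qquad\forall h=1,\dots,N\,,
\]
so in every ball $B_S(x_k(h))$ both chamber $h$ (by \eqref{hp:some-mass}) and chamber $0$ are present with positive, non-full measure. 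In the notation of Step~1 of the proof of Lemma~\ref{lemma:campo} this reads $\{0,h\}\subset H(x_k(h),S)$; in particular the $N$ vectors $\{e_h-e_0\}_{h=1}^N$ form a basis of $V$.

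In the intended application of the proposition one has $\sup_k P_s(\E_k)<\infty$, so the $L^1_{\rm loc}$-compactness of sets of bounded $s$-perimeter, combined with a diagonal extraction, produces a subsequence (not relabeled) along which the translated clusters $\E_k^h:=\E_k-x_k(h)$ converge in $L^1_{\rm loc}(\R^n)$ to a limit cluster $\E_\infty^h$ for every $h=1,\dots,N$; the limit inherits $|\E_\infty^h(h)\cap B_S|\ge c_0$ and $|\E_\infty^h(0)\cap B_S|\ge |B_S|/2$. Step~1 of the proof of Lemma~\ref{lemma:campo}, applied to $\E_\infty^h$ with $z=0$ and $R=S$, produces $T_h\in C^\infty_c(B_S;\R^n)$ such that
\[
\int_{\E_\infty^h(i)}\Div(T_h)\,dx=\delta_{ih}-\delta_{i0}\qquad\forall i=0,\dots,N\,.
\]
Setting $T_h^k(y):=T_h(y-x_k(h))$ gives a vector field supported in $B_S(x_k(h))$ whose $C^1$ norm is independent of $k$, and the $L^1_{\rm loc}$ convergence upgrades the identity above to $\int_{\E_k(i)}\Div(T_h^k)\,dx\to\delta_{ih}-\delta_{i0}$ as $k\to\infty$.

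With these fields in hand the construction of $\Phi_k$ now follows line by line the proof of Proposition~\ref{prop:vol-fix}: set $f_t^{h,k}(x):=x+tT_h^k(x)$ and $\Psi_k(\mathbf t,x):=(f^{1,k}_{t_1}\circ\cdots\circ f^{N,k}_{t_N})(x)$, and let $\psi_k$ be the associated volume-variation map. The uniform $C^1$ bound on $T_h^k$ yields a uniform bound $|\nabla^2\psi_k|\le C_0$, while the previous limit shows that $\nabla\psi_k(0)$ converges to the rank-$N$ isomorphism $\R^N\to V$, $\mathbf t\mapsto\sum_h t_h(e_h-e_0)$. The quantitative implicit function theorem, exactly as in Step~2 of the proof of Proposition~\ref{prop:vol-fix}, then produces $\eta,\kappa>0$ independent of $k$ and $C^1$ maps $\zeta_k:(-\eta,\eta)^{N+1}\cap V\to\R^N$ with $\psi_k(\zeta_k(\mathbf a))=\mathbf a$ and $|\zeta_k(\mathbf a)|\le 2\kappa^{-1}|\mathbf a|$. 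Defining $\Phi_k(\mathbf a,x):=\Psi_k(\zeta_k(\mathbf a),x)$ one obtains (i) and (ii), while (iii) follows by iterating Lemma~\ref{lemma:infinit-vol-fix}(ii) along the composition exactly as in \eqref{eqn:per-cons}--\eqref{eqn:fine-vol-fix}, the constant $C$ being uniform thanks to the uniform $C^1$ bound on $T_h^k$ and the uniform bound on $\zeta_k$.

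The main technical difficulty is the case in which several centers $x_k(h)$ collapse to a common limit, forcing the balls $B_S(x_k(h))$ to overlap and preventing a naive choice of pairwise disjointly supported $T_h^k$; this disjointness is needed to decompose $\psi_k$ as a sum over $h$. The fix is to extract first a subsequence along which, for every pair $h\ne h'$, $|x_k(h)-x_k(h')|$ admits a limit in $[0,\infty]$, and then group those indices whose mutual distances vanish. Within each such group one works on the single limit cluster at the common limit point, and applies Step~1 of the proof of Lemma~\ref{lemma:campo} on disjoint small subballs of $B_S$ (centered at points of density one of the chambers in the group, which exist since each of those chambers, as well as chamber $0$, has positive mass on $B_S$) to produce disjointly supported $\{T_h\}$ — one per index $h$ in the group — whose disjointness persists for $\E_k$ after a slight shrinking of the supports.
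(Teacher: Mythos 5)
Your overall skeleton — translate to exploit $L^1_{\rm loc}$ compactness, construct vector fields whose divergences exchange volume, run a quantitative implicit function theorem uniformly in $k$, then translate the diffeomorphisms back — is the same as the paper's (which likewise translates the nuclei to a bounded region and applies Proposition~\ref{prop:vol-fix} to the resulting limit cluster). Several pieces are right: the observation that $|\E_k(0)\cap B_S(x_k(h))|\ge|B_S|/2$, the implicit appeal to $\sup_k P_s(\E_k)<\infty$ (which the paper also uses silently when bounding $P_s(\E_k^*)$), the uniform $C^2$ bound on the volume map, and the passage from \eqref{eqn:per-cons} to \eqref{eqn:fine-vol-fix} for part~(iii).

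However, your fix for the overlapping-ball case has a genuine gap. You propose to center the small disjoint subballs at density-one points of the chambers in the group, arguing that this is possible because each chamber as well as $\E(0)$ has positive mass in $B_S$. But Step~1 of the proof of Lemma~\ref{lemma:campo} produces a vector field whose divergence realizes a prescribed vector in $V'=\{\bold a\in V:\bold a(j)=0\ \forall j\notin H\}$, and this requires that \emph{both} indices you want to exchange belong to $H(z,R)=\{j:0<|\E(j)\cap B_R(z)|<|B_R(z)|\}$. At a density-one point $z$ of $\E(h)$, chamber $0$ has density zero, so for small $R$ one may well have $|\E(0)\cap B_R(z)|=0$, i.e.\ $0\notin H(z,R)$, and then the sought $T_h$ with $\int_{\E(i)}\Div T_h=\delta_{ih}-\delta_{i0}$ simply does not exist in that subball. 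More generally, it is not automatic that $\E(0)$ and $\E(h)$ share an interface at all: some other chamber can sit between them, in which case $e_h-e_0$ cannot be realized by any single compactly supported field on a small ball. A secondary issue is the grouping criterion: you only group indices whose mutual distances vanish, but if $|x_k(h)-x_k(h')|\to d\in(0,2S)$ the balls $B_S(x_k(\cdot))$ still overlap and the supports of your fields would collide; the grouping must instead be by bounded limiting distance, as in \cite[Section~29.7]{maggiBOOK}.

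The correct device — and this is what the paper does — is Lemma~\ref{lemma:linkedchambers}: translate the nuclei to a fixed bounded region, take the $L^1$ limit cluster $\E^*$, and apply Lemma~\ref{lemma:linkedchambers}(ii) there to obtain a system of \emph{interface} points $\{z_\alpha\}$ for all pairs of \emph{neighboring} chambers. Each $z_\alpha\in\pa\E^*(\phi^1(\alpha))\cap\pa\E^*(\phi^2(\alpha))$ guarantees that the relevant pair lies in $H(z_\alpha,r)$ for every $r>0$, so disjoint supports can be chosen, and Lemma~\ref{lemma:linkedchambers}(i) guarantees the resulting matrix has rank $N$ even though the realizable exchange vectors are $e_{\phi^1(\alpha)}-e_{\phi^2(\alpha)}$ rather than $e_h-e_0$. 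Replacing your density-one points (and the fixed pairs $(0,h)$) with this system of interface points for neighboring pairs, and grouping by bounded distance, closes the gap and essentially reproduces the paper's argument.
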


In the course of the proof we shall need the following basic property of fractional perimeters: for every measurable set $E$ and for every ball $B$ it holds
\begin{equation}
\label{eqn:per-e-palla}
P_s(E\cap B) \leq I_s(E \cap B, E^c) + I_s(E\cap B, B^c) \leq I_s(E, E^c) + I_s(B, B^c) =  P_s(E) + P_s(B).
\end{equation}

\begin{proof}
Up to extracting a not relabelled subsequence, we may assume that there exist $\lim_{k \to \infty} x_k(h) - x_k(h') $ for every $h, h' \in \{1,..., N\}$. Moreover, we can  partition $\{1,...,N\}$ into $\ell$ disjoint sets $\Lambda_1,..., \Lambda_\ell$ such that for every $j=1,...,\ell$
$$\mbox{there exists }\lim_{k \to \infty} x_k(h) - x_k(h') \in \overline{B_{2NS} }\qquad \mbox{if } h, h' \in \Lambda_j,$$
$$\liminf_{k \to \infty} x_k(h) - x_k(h') > 2S
\qquad \mbox{for every } h, h' \in \Lambda_j.$$
The construction of the sets $\Lambda_j$ is performed in \cite[Section 29.7, Step 1]{maggiBOOK}. Then we have isolated $\ell$ disjoint nuclei in $\E_k$, each of them of the form
$$\E_k(h) \cap \bigcup_{h' \in \Lambda_j} B_S(x_k(h')) \qquad \mbox{for every } h = 1,..., N, \; j=1,..., \ell.$$
By setting $v_j = 8 (N+1)S j e_n$ and by selecting an element $h_j$ in each set $\Lambda_j$, we define a new sequence of clusters $\E_k^*$ by setting for every $h=1,...,N$
$$\E_k^*(h) = \bigcup_{i=1}^\ell \left( v_j - x_k(h_j) + \Big( \E_k(h) \cap \bigcup_{h' \in \Lambda_j} B_S(x_k(h'))
\Big) \right).
$$
For every $h=1,...,N$, by \eqref{eqn:per-e-palla} we obtain
\begin{equation*}
\begin{split}
P_s(\E^*_k(h)) &\leq \sum_{j=1}^\ell \sum_{h'\in \Lambda_j} P_s\big(\E^*_k(h) \cap B_S(v_j- x_k(h_j)+x_k(h')) \big) \\
&= \sum_{j=1}^\ell \sum_{h'\in \Lambda_j} P_s\big(\E_k(h) \cap B_S(x_k(h')) \big) = \sum_{h'=1}^N P_s\big(\E_k(h) \cap B_S(x_k(h')) \big)
\\
&\leq NP_s \big ( B_S(0) \big)+ \sum_{h=1}^N P_s \big(\E_k(h)\big).
\end{split}
\end{equation*}
By the bound on the perimeters of $\E_k^*$ above, which are all contained in $B_{12(N+1) S}(0)$, we deduce that there exists a cluster $\E \subset B_{12(N+1) S}(0)$ such that, up to a subsequence, each chamber of $\E_k^*$ converges to the corresponding chamber of $\E^*$ in $L^1(B_{12(N+1) S}(0))$. Moreover, by \eqref{hp:some-mass}, if $h \in \Lambda_j$ for some $j$, we have that
$$|\E^*(h) \cap B_S(v_j- x_k(h_j)+x_k(h))| \geq c_0.$$
We apply Lemma~\ref{lemma:linkedchambers} to obtain a system of interface points for $\E^*$ in  $\cup_{h=1}^N B_S(v_j- x_k(h_j)+x_k(h))$ (we use the open set $A$ given by a union of balls). Following the proof of Proposition~\ref{prop:vol-fix} applied to the reference cluster $\E$, we find $\eta,\e_1$ and $C_1$ (independent on $k$), one-parameter families of diffeomorphisms $\{f^\a_t\}_{\a=1,...,M}$ and $\zeta:(-\eta,\eta)^{N+1} \cap V \to \R^M$ (the latter two depend on $k$, as in the previous proof they depended on $\E'$, but for simplicity we omit this dependence) with the following properties. For every $\a =1,...,M$ there exists a $j\in \{1,...,\ell\}$ and $h' \in \Lambda_j$ such that
\begin{equation}
\label{eqn:supp-f-t-appl-new}
\big\{ x\in \R^n: x \neq f^\a_t(x) \big\} \subset\subset B_S(v_j- x_k(h_j)+x_k(h')), \qquad \mbox{for every } |t| \le \e_1,
\end{equation}
the sets $\big\{ x\in \R^n: x \neq f^\a_t(x) \big\}$ are all disjoint as $\alpha$ ranges in $1,... , M$,
\begin{equation}
\label{eqn:vol-fix-appl4-new}|P_s(f^\a_t(E))- P_s(E)| \leq C_0|t| \,P_s(E),
\end{equation}
and setting
\begin{equation}
\label{eqn:def-Phi-new}
\Phi^*_k(\bold a, x)=(f^1_{\zeta_1(\bold a)} \circ f^2_{\zeta_2(\bold a)} \circ ... \circ f^M_{\zeta_M(\bold a)})(x) \qquad (\bold a,x) \in ((\eta,\eta)^{N+1} \cap V) \times \R^n,
\end{equation}
we have
\begin{equation}
\label{eqn:vol-change-star}\Big| \Phi^*_k(\bold a, \E^*_k(h)) \cap \{ x\in \R^n : \Phi^*_k(\bold a,x) \neq x\}\Big| = \Big| \E^*_k(h) \cap \{ x\in \R^n : \Phi^*_k(\bold a,x) \neq x\}\Big| + \bold a (h).
\end{equation}
Now we suitably translate the functions $f^1_{\zeta_1(\bold a)} , ... ,f^M_{\zeta_M(\bold a)}$ in such a way that they act on the cluster $\E_k$ rather than on its translation $\E^*_k$; more precisely, we define for every $\a =1,...,M$
$$
g^\a_{\zeta_\a(\bold a)}(x) = f^\a_{\zeta_\a(\bold a)}(x + v_j- x_k(h_j)) - v_j+ x_k(h_j) \qquad
$$
(once more we omit the dependence on $k$ for ease of notation; here $j\in \{1,...,\ell\}$ and $h' \in \Lambda_j$ are chosen to satisfy \eqref{eqn:supp-f-t-appl-new}) and for every $k\in \N$
$$\Phi_k(\bold a, x)=(g^1_{\zeta_1(\bold a)} \circ g^2_{\zeta_2(\bold a)} \circ ... \circ g^M_{\zeta_M(\bold a)})(x) \qquad (\bold a,x) \in (\eta,\eta)^{N+1} \times \R^n.
$$
It is clear that, since $f^\a_{\zeta_\a(\bold a)}$ is the identity outside $B_S(v_j- x_k(h_j)+x_k(h))$, the diffeomorphism $g^\a_{\zeta_\a(\bold a)}$ is the identity outside $B_S(x_k(h))$; moreover
$$\{ x\in \R^n: x \neq g^\a_t(x) \big\} =x_k(h_j)-v_j + \{ x\in \R^n: x \neq f^\a_t(x) \big\}.$$
It is easily checked by the definition of $\E^*_k$ that for every $h=1,...,N$ the set
$g^\a_{\zeta_\a(\bold a)}( \E_k(h)) \cap \big\{ x\in \R^n: x \neq g^\a_t(x) \big\}$ is a translation of $f^\a_{\zeta_\a(\bold a)}( \E_k^*(h)) \cap \big\{ x\in \R^n: x \neq f^\a_t(x) \big\}$, so that the volume change induced on $\E_k^*$ by $f^\a_{\zeta_\a(\bold a)}$ is the same volume change induced on $\E_k$ by $g^\a_{\zeta_\a(\bold a)}$: in other words,
$$\big| g^\a_{\zeta_\a(\bold a)}( \E_k(h)) \cap \big\{ x\in \R^n: x \neq g^\a_t(x) \big\} \big| = \big| f^\a_{\zeta_\a(\bold a)}( \E_k^*(h)) \cap \big\{ x\in \R^n: x \neq f^\a_t(x) \big\} \big|.$$
Since the diffeomorphisms $f^\a_{\zeta_\a(\bold a)}$ act (as $\a$ varies) on nonintersecting sets, and the same happens to $g^\a_{\zeta_\a(\bold a)}$, by composing the diffeomorphisms when $\a$ varies by \eqref{eqn:vol-change-star} we deduce that
\begin{equation*}
\begin{split}
\Big| \Phi_k(\bold a, \E_k(h)) \cap \{ x\in \R^n : \Phi_k(\bold a,x) \neq x\}\Big|
&=\Big| \Phi^*_k(\bold a, \E^*_k(h)) \cap \{ x\in \R^n : \Phi^*_k(\bold a,x) \neq x\}\Big|
\\ & = \Big| \E^*_k(h) \cap \{ x\in \R^n : \Phi^*_k(\bold a,x) \neq x\}\Big| + \bold a (h)
\\ & = \Big| \E_k(h) \cap \{ x\in \R^n : \Phi_k(\bold a,x) \neq x\}\Big| + \bold a (h);
\end{split}
\end{equation*}
hence (ii) holds true. To prove (iii), we repeat word by word the argument between \eqref{eqn:per-cons} and \eqref{eqn:fine-vol-fix} with $g^\a_{\zeta_\a(\bold a)}$ replacing $f^\a_{\zeta_\a(\bold a)}$ at every occurrence (by the nonlocality of the $s$-perimeter, the fact that (iii) holds with $\Phi^*_k$ replacing $\Phi_k$ does not allow directly to conclude the statement; we need to repeat the argument on each $g^\a_{\zeta_\a(\bold a)}$).
\end{proof}

\subsection{Truncation lemma}\label{section truncation} We now state and prove the truncation lemma for fractional perimeters needed in the existence proof. In the case of sets ($N=1$) this lemma has already appeared as \cite[Lemma 4.5]{F2M3}.

 \begin{lemma} \label{lemma truncation}
 Let $n\ge 2$, $s\in(0,1)$, $\tau\in(0,1)$, let $\E$ be an $N$-cluster in $\R^n$, and $F \subseteq \R^n$ be a closed set with $u(x) = \dist(x,F)$ for $x\in\R^n$. If
 \[
 \sum_{h=1}^{N} |\E(h)\setminus F|\le\tau\,,
 \]
 then there exists $ r_0 \in [0, C_1\,\tau^{1/n}]$ such that the $N$-cluster $\E'$ in $\R^n$ defined by
 $$\E'(h) = \E(h) \cap \{ u\leq r_0\} \qquad 1 \leq h \leq N$$
satisfies
 \begin{equation}
   \label{tr1}
 (1-s)\,P_s(\E' )\le (1-s)\,P_s(\E)-\frac{\dist(\E, \E')}{C_2(n,s)\,\tau^{s/n}}\,,
 \end{equation}
 where
 \begin{equation}
   \label{c1c2}
    C_1(n,s):=2^{1+(n-s)/s}\,\Big(\frac{4\,|B|^{(n-s)/n}\,P(B)}{s\,(1-s)\,P_s(B)}\Big)^{1/s} \,,\qquad C_2(n,s):=\frac{2|B|^{(n-s)/n}}{(1-s)\,P_s(B)}\,.
 \end{equation}
 In particular, $\sup\{C_1(n,s)+C_2(n,s):s_0\le s<1\}<\infty$ for every fixed $s_0\in(0,1)$.
 \end{lemma}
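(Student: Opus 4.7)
The plan is to reduce to the single-set version of the truncation lemma already established as \cite[Lemma 4.5]{F2M3} for $N=1$, with the very same constants $C_1(n,s),C_2(n,s)$ of \eqref{c1c2}. Setting
\[
E:=\bigcup_{h=1}^N\E(h)=\R^n\setminus\E(0),
\]
the hypothesis rewrites as $|E\setminus F|\le\tau$, so \cite[Lemma 4.5]{F2M3} applied to the pair $(E,F)$ produces some $r_0\in[0,C_1\tau^{1/n}]$ such that the truncated set $E':=E\cap\{u\le r_0\}$ satisfies the single-set truncation estimate with the constants of \eqref{c1c2}. I will use this \emph{same} $r_0$ to define the cluster truncation $\E'$ as in the statement and show that the cluster perimeter drop dominates the single-set one.

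For the comparison, write $A_h:=\E'(h)=\E(h)\cap\{u\le r_0\}$ and $B_h:=\E(h)\cap\{u>r_0\}$ for $h=1,\dots,N$, and set $A:=\bigcup_{h=1}^N A_h=E'$, $B:=\bigcup_{h=1}^N B_h=E\setminus E'$, so that $\E'(0)=\E(0)\cup B$. Expanding $P_s(\E)=\sum_{0\le i<j\le N}I_s(\E(i),\E(j))$ and $P_s(\E')=\sum_{0\le i<j\le N}I_s(\E'(i),\E'(j))$ pair by pair and performing the routine cancellations (the cross terms $I_s(A_h,B_k)$ with $h\ne k$ drop out, since $A_h\subset\E'(h)$ and $B_k\subset\E'(0)$ still belong to distinct chambers in $\E'$) one reaches, after a short bookkeeping, the identity
\[
P_s(\E)-P_s(\E')=I_s(\E(0),B)-\sum_{h=1}^N I_s(A_h,B_h)+\sum_{1\le h<k\le N}I_s(B_h,B_k).
\]

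The crucial step is now the elementary bound
\[
\sum_{h=1}^N I_s(A_h,B_h)\le\sum_{h,k=1}^N I_s(A_h,B_k)=I_s(A,B),
\]
which follows by nonnegativity of the off-diagonal terms; together with $\E(0)=E^c$ and the fact that $\sum_{h<k}I_s(B_h,B_k)\ge 0$, it yields
\[
P_s(\E)-P_s(\E')\ge I_s(E^c,B)-I_s(A,B)=P_s(E)-P_s(E'),
\]
the final equality being the standard single-set identity for the truncation $E'=E\cap\{u\le r_0\}$. Combining with the single-set estimate of \cite[Lemma 4.5]{F2M3} at $r_0$ and identifying $|E\setminus E'|=\sum_{h=1}^N|B_h|$ with the cluster distance $\dist(\E,\E')$ appearing in the statement produces \eqref{tr1}. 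The only nontrivial point is the combinatorial identity for $P_s(\E)-P_s(\E')$ displayed above; no new analytical ingredient beyond the single-set lemma is required, and the constants $C_1(n,s),C_2(n,s)$ together with their uniform boundedness for $s$ away from $1$ transfer directly from \cite[Lemma 4.5]{F2M3}.
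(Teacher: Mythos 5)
Your approach is genuinely different from the paper's and is conceptually cleaner. The paper works directly with the quantity $m(r)=\sum_{h=1}^N|\E(h)\setminus F_r|$ and rederives a differential inequality of the same type as in \cite[Lemma~4.5]{F2M3}, essentially repeating the argument in the cluster setting. You instead reduce to the single-set lemma for $E=\bigcup_{h=1}^N\E(h)=\E(0)^c$ and then transfer the estimate to the cluster via the algebraic comparison
\[
P_s(\E)-P_s(\E') \;-\;\bigl(P_s(E)-P_s(E')\bigr) \;=\; \sum_{1\le h<k\le N}I_s(B_h,B_k)\;+\;\sum_{h\ne k}I_s(A_h,B_k)\;\ge\;0\,,
\]
which you obtain from the identity you displayed together with $I_s(A,B)=\sum_{h,k}I_s(A_h,B_k)$. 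I checked this identity by expanding $P_s(\E)=\sum_{0\le i<j\le N}I_s(\E(i),\E(j))$ and it is correct; the observation that truncation of a cluster loses at least as much $s$-perimeter as truncation of the union of its chambers is nice, and it lets you inherit the radius $r_0$ directly from the single-set statement.

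There is one slip you should fix. You close by ``identifying $|E\setminus E'|=\sum_{h=1}^N|B_h|$ with the cluster distance $\dist(\E,\E')$,'' but this is off by a factor of $2$: the exterior chamber also changes, with $\E(0)\Delta\E'(0)=B$, so
\[
\dist(\E,\E')\;=\;|\E(0)\Delta\E'(0)|+\sum_{h=1}^N|\E(h)\Delta\E'(h)|\;=\;|B|+\sum_{h=1}^N|B_h|\;=\;2\,|E\setminus E'|\,.
\]
Note that the same factor of $2$ appears in the $N=1$ case, since $\dist(\{E\},\{E'\})=|E\Delta E'|+|E^c\Delta (E')^c|=2|E\setminus E'|$. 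Indeed, the paper states that the $N=1$ case of this lemma \emph{is} \cite[Lemma~4.5]{F2M3}, so the cleanest way to phrase your reduction is: apply the lemma in the $N=1$ case to the $1$-cluster $\{E\}$, observe that $\dist(\{E\},\{E'\})=\dist(\E,\E')$ (both equal $2|E\setminus E'|$), and then use the perimeter-drop comparison above. Written that way the constants $C_1,C_2$ transfer verbatim, with no need to reinterpret the single-set estimate in terms of $|E\setminus E'|$. With this correction your argument is complete.
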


\begin{remark}
  {\rm Here we pay some attention to the dependency of constants from $s$, as the constants can be shown to be uniform in the limit $s\to 1^-$.}
\end{remark}

 \begin{proof}
For every $r\geq 0$, let us call $F_r =  \{ u\leq r\}$ the $r$-enlargement of $F$ and let us define the cluster $\E^r$ whose chambers are $\E^r(h) =  \E(h) \cap F_r$ for every $1 \leq h \leq N$ . Without loss of generality we may assume that
\[
\sum_{h=1}^{N}|\E(h)\setminus F_{C_1\,\tau^{1/n}}|>0
\]
otherwise, we set $r_0 =C_1\,\tau^{1/n}$ and \eqref{tr1} holds. If we set $m(r)=\sum_{h=1}^{N}|\E(h)\setminus F_r|$, $r>0$, then $m$ is a nonincreasing function with
 \begin{equation}
     \label{golden2}
        [0,C_1\,\tau^{1/n}]\subset{\rm spt}\,m\,\qquad m(0)\le \tau\,,\qquad m'(r)=-\sum_{h=1}^{N}\H^{n-1}(\E(h)\cap\pa F_r)\quad\mbox{for a.e. $r>0$}\,.
 \end{equation}
 Arguing by contradiction, we now assume that
 \begin{equation}
 \label{tr3}
 (1-s)\,P_s(\E)\le (1-s)\,P_s(\E^r)+\frac{m(r)}{C_2\,\tau^{s/n}}\,,\qquad\forall r\in(0,C_1\,\tau^{1/n})\,.
 \end{equation}
 First, for every $r>0$ and $h=1,...,N$ we have the identity
 \begin{eqnarray*}
  P_s(\E(h)\cap F_r)-P_s(\E(h))
  &=& 2  P_s(F_r;\E(h)) - P_s(\E(h)\setminus F_r) \\
  &=& 2\,\int_{\E(h)\cap F_r}\int_{\E(h)\cap F_r^c}\frac{dx\,dy}{|x-y|^{n+s}}-P_s(\E(h)\setminus F_r)\,.
 \end{eqnarray*}
Since $\E(h)\cap F_r \subseteq B_{u(y)-r}(y)$ for every $y \in \E(h)\cap F_r^c$ and by the coarea formula, for every $r>0$ we estimate
\begin{eqnarray*}
 \int_{\E(h)\cap F_r}\int_{\E(h)\cap F_r^c}\frac{dx\,dy}{|x-y|^{n+s}}
 &\le&
 \int_{\E(h)\cap F_r^c}dy \int_{ B_{u(y)-r}(y)}\frac{dx}{|x-y|^{n+s}}
 \\
 &=& P(B) \int_{\E(h)\cap F_r^c}dy \int_{u(y)-r}^\infty\frac{d\rho}{\rho^{1+s}}
\\
&=& \frac{P(B)}{s} \int_{\E(h)\cap F_r^c} \frac{dy}{(u(y)-r)^s}
=\frac{P(B)}s\,\int_r^\infty\,\frac{-m'(t)}{(t-r)^s}\,dt\,.
\end{eqnarray*}
Finally, by the nonlocal isoperimetric inequality,
$$\sum_{h=1}^N P_s(\E(h)\setminus F_r)\ge P_s\Big(\bigcup_{h=1}^N\E(h)\setminus F_r\Big) \ge P_s(B)|B|^{(s-n)/n}\,m(r)^{(n-s)/n}.$$
 We may thus combine these three remarks with \eqref{tr3} to conclude that, if $r\in(0,C_1\,\tau^{1/n})$, then
 \begin{eqnarray}\nonumber
     0&\le& \frac{2\,P(B)}s\,\int_r^\infty\frac{-m'(t)}{(t-r)^s}\,dt-\frac{P_s(B)}{|B|^{(n-s)/n}}\,m(r)^{(n-s)/n}+\frac{m(r)}{(1-s)\,C_2\,\tau^{s/n}}
     \\\label{golden4}
     &\le&\frac{2\,P(B)}s\,\int_r^\infty\frac{-m'(t)}{(t-r)^s}\,dt-\frac{P_s(B)}{2|B|^{(n-s)/n}}\,m(r)^{(n-s)/n}\,,
 \end{eqnarray}
 where in the last inequality we have used our choice of $C_2$ and the fact that $m(r)\le\tau$ for every $r>1$. We rewrite \eqref{golden4} in the more convenient form
 \begin{equation}
   \label{golden8}
   m(r)^{(n-s)/n}\le C_3\,\int_r^\infty\frac{-m'(t)}{(t-r)^s}\,dt\,,\qquad \forall r\in(1,1+C_1\,\tau^{1/n})\,,
 \end{equation}
 where we have set
 \[
 C_3(n,s):=\frac{4\,|B|^{(n-s)/n}\,P(B)}{s\,P_s(B)}\,.
 \]
Proceeding as in \cite[Lemma 4.5]{F2M3} one can show that any function $m$ satisfying the previous differential inequality satisfies $m(r)\to 0$ as $r \to C_1\,\tau^{1/n}$. This gives a contradiction.
 \end{proof}

\subsection{Nucleation lemma}\label{section nucleation}
The following nucleation lemma is obtained by combining part of the argument leading to its local analogous (see \cite[Lemma 29.10]{maggiBOOK}) with a lemma for fractional perimeters already appeared in \cite[Lemma 4.3]{F2M3}.

 \begin{lemma}\label{lemma:nucleation}
  Let $n\ge 2$ and $s\in(0,1)$. 
  If $P_s(E)<\infty$, $0<|E|<\infty$, and
  \begin{equation}
  \label{eqn:eps}
  \e \leq \min \Big\{ |E|, \frac{1-s}{\chi_1\chi_2} P_s(E)\Big\}
  \end{equation}
  then there exists a finite family of points $I \subset \R^n$ such that
  $$
  \Big| E \setminus \bigcup_{x\in I} B_2(x)\Big| <\e,
  $$
  \begin{equation}
  \label{eqn:density-nucleata}
  \Big| E \cap B_1(x)\Big| \geq \Big(\frac{\chi_1\,\e}{(1-s)\,P_s(E)}\Big)^{n/s} \qquad \forall x\in I,
  \end{equation}
    where
 \begin{equation}
 \label{defn:xi12}
    \chi_1(n,s):=\frac{(1-s)\,P_s(B)}{4\,|B|^{(n-s)/n}\,\xi(n)}\,,
\qquad
    \chi_2(n,s):=\frac{2^{3+(n/s)}\,|B|^{(n-s)/n}\,P(B)}{s(1-s)\,P_s(B)}\,,
 \end{equation}
  and where $\xi(n)$ is Besicovitch's covering constant (see for instance \cite[Theorem 5.1]{maggiBOOK}). In particular, $0<\inf\{\chi_1(n,s),\chi_2(n,s)^{-1}:s\in[s_0,1)\}<\infty$ for every $s_0\in(0,1)$.
Moreover, $|x-y| >2$ for every $x,y \in I$, $x\neq y$, and
$$ \# I \leq |E| \Big(\frac{(1-s)\,P_s(E)}{\chi_1\,\e}\Big)^{n/s} .$$
 \end{lemma}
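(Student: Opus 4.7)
My plan is to follow the classical nucleation scheme of Almgren (as presented in \cite[Lemma~29.10]{maggiBOOK}), replacing the relative isoperimetric inequality used there with its fractional analogue from \cite[Lemma~4.3]{F2M3}. That fractional density lemma asserts, roughly, that any $A\subset\R^n$ of finite $s$-perimeter with $\sup_{x\in\R^n}|A\cap B_1(x)|\le\eta$ obeys a bound of the form $|A|\le \xi(n)\,\eta^{s/n}P_s(A)/(\chi_1(1-s))$, the dimensional constants being calibrated precisely so that $\chi_1$ from \eqref{defn:xi12} emerges.

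The first step is to fix the density threshold dictated by \eqref{eqn:density-nucleata}, namely $\theta:=(\chi_1\e/((1-s)P_s(E)))^{n/s}$; the second condition in \eqref{eqn:eps}, through the $\chi_2$ factor, ensures $\theta<|B_1|$ so the extraction of unit balls with $E$-density at least $\theta$ is meaningful. Second, I would extract $I$ by a greedy/Besicovitch selection: iteratively pick $x_i$ with $|E\cap B_1(x_i)|\ge\theta$ and $|x_i-x_j|>2$ for all previously selected $x_j$, stopping when no such point remains. Pairwise disjointness of $\{B_1(x_i)\}$ and a pigeonhole argument (summing $|E\cap B_1(x_i)|\ge\theta$) bound the number of iterations by $|E|/\theta$, giving the cardinality estimate, while the distance bound $|x-y|>2$ holds by construction.

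Third, to prove $|E\setminus\bigcup_{x\in I}B_2(x)|<\e$, set $A:=E\setminus\bigcup_i B_2(x_i)$ and verify that $\sup_{z\in\R^n}|A\cap B_1(z)|$ is controlled by $\theta$: if $|z-x_i|\le 1$ for some $i$ then $B_1(z)\subset B_2(x_i)$ and $A\cap B_1(z)=\emptyset$, while if $|z-x_i|>2$ for every $i$ then the stopping rule forces $|E\cap B_1(z)|<\theta$. The intermediate regime $1<|z-x_i|\le 2$ is handled by a direct geometric covering of $B_1(z)\cap\{|\cdot-x_i|>2\}$ by a bounded number (depending only on $n$ through $\xi(n)$) of unit balls centered far from every $x_j$. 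Feeding this into the fractional density lemma applied to $A$, and using the subadditivity bound $P_s(A)\lesssim P_s(E)$ (up to a term controllable by the cardinality of $I$), yields $|A|\le\e$ exactly under the assumption $\e\le (1-s)P_s(E)/(\chi_1\chi_2)$: this is the quantitative role played by $\chi_2$.

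The main technical obstacle is the interplay between the greedy selection distance ($>2$) and the removal radius ($B_2$): a naive argument would favor removing $B_3(x_i)$ for clean geometric separation between the cases $|z-x_i|\le 1$ and $|z-x_i|>2$, and it is the Besicovitch constant $\xi(n)$ built into $\chi_1$ that compensates for the intermediate regime. A secondary delicate point is that, unlike the local case, the $s$-perimeter can grow under set subtraction, so controlling $P_s(A)$ in terms of $P_s(E)$ requires the additional buffer furnished by $\chi_2$; the precise form of \eqref{defn:xi12} is chosen so that all these constants fit together.
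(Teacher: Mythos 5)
Your overall scheme (greedy selection of well-separated unit balls with density at least $\theta$, bounded by Besicovitch, then controlling the leftover mass) is in the right spirit and matches Almgren's pattern, but there is a genuine gap at the crucial residual-mass estimate in your third step.

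You propose to set $A=E\setminus\bigcup_i B_2(x_i)$, observe $\sup_z|A\cap B_1(z)|\lesssim\theta$, and then close via a ``fractional density lemma'' applied to $A$, writing $P_s(A)\lesssim P_s(E)$ ``up to a term controllable by $\#I$.'' That slack term is not controllable: subadditivity only gives $P_s(A)\le P_s(E)+2\sum_i P_s(B_2(x_i))$, and $\#I$ is of order $|E|/\theta$, so the correction is of order $|E|\,\theta^{-1}P_s(B_2)$. Feeding this into the density bound $|A|\lesssim\theta^{s/n}P_s(A)/(1-s)$ produces a term of order $|E|\,\theta^{s/n-1}$, which blows up as $\theta\to 0$ (since $s/n<1$) and dwarfs the target $\e$. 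In other words, the approach cannot work as stated because removing the balls $B_2(x_i)$ from $E$ can increase the $s$-perimeter by an amount that is not comparable to $P_s(E)$.

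The paper avoids introducing $A$ altogether. The quantity that it bounds is not $P_s(A)$ but the original interaction $\int_{E\cap B_{r_x}(x)}\int_{E^c}|z-y|^{-n-s}\,dz\,dy$: by \cite[Proof of Lemma 4.3, Step 1]{F2M3}, at any point $x\in E^{(1)}$ at distance $>1$ from a closed set $F$ where $|E\cap B_1(x)|<\theta$, there is a radius $r_x\le1$ with $|E\cap B_{r_x}(x)|\le\frac{1-s}{\alpha}\int_{E\cap B_{r_x}(x)}\int_{E^c}$. Taking a Besicovitch-disjoint subfamily of $\{\overline{B_{r_x}(x)}\}$, the corresponding interaction integrals are over pairwise disjoint pieces of $E\times E^c$ and therefore sum to at most $P_s(E)$ \emph{exactly}, with no $\#I$ loss. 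Choosing $\alpha$ appropriately this yields $|\{x\in E:\dist(x,F)>1\}|\le\e/2$ by contradiction, which is precisely your set $A$ once $F=\bigcup_i\overline{B_1(x_i)}$. The selection is then done inductively, applying this ``claim'' at each step with $F$ equal to the union of closed unit balls already placed, so that the same estimate against $P_s(E)$ is used each time. To repair your argument, replace the density lemma applied to $A$ with this direct Besicovitch estimate of the interaction energy of $E$ over disjoint balls avoiding $F$, which removes the need to control $P_s(A)$ at all; the rest of your outline (threshold $\theta$, disjointness, pigeonhole cardinality bound) then goes through.
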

\begin{proof}
In \cite[Proof of Lemma 4.3, Step 1]{F2M3} it is proved that if $x\in E^{(1)}$ with
   \begin{equation}
     \label{rutgers1}
     |E\cap B_1(x)|\le\Big(\frac{(1-s)\,P_s(B)}{2\,|B|^{(n-s)/n}\,\a} \Big)^{n/s}
   \end{equation}
   for some $\a$ satisfying
   \begin{equation}
     \label{alpha}
     \a\geq\frac{2^{2+(n/s)}\,P(B)}s\,,
   \end{equation}
   then there exists $r_x\in(0,1]$ such that
   \begin{equation}\label{rutgers2}
   |E\cap B_{r_x}(x)|\le\frac{(1-s)}\a\,\int_{E\cap B_{r_x}(x)}\int_{E^c}\frac{dz\,dy}{|z-y|^{n+s}}\,.
   \end{equation}
This statement 
 is in turn the basic step for proving the following claim: if $F \subseteq \R^n$ is closed, $\e$ satisfies \eqref{eqn:eps}, and
\begin{equation}
\label{eqn:contr-F}
\Big| \big\{ x\in E : \dist(x,F)>1\big\} \Big| \geq \e,
\end{equation}
then there exists $x\in E^{(1)}$ with $\dist(x,F) >1$ and
$$\Big| E \cap B_1(x)\Big| \geq \Big(\frac{\chi_1\,\e}{(1-s)\,P_s(E)}\Big)^{n/s}.$$ Indeed, by contradiction, assume that if $x\in E^{(1)}$ with $\dist(x,F) >1$ then
$$\Big| E \cap B_1(x)\Big| < \Big(\frac{\chi_1\,\e}{(1-s)\,P_s(E)}\Big)^{n/s}= \Big(\frac{(1-s)\,P_s(B)}{2\,|B|^{(n-s)/n}\,\a} \Big)^{n/s}.$$
In the last equality we chose $\alpha = 2(1-s) P_s(E) \xi(n)/\e$. Thanks to our assumption \eqref{eqn:eps} on $\e$, we see that \eqref{alpha} holds.
Hence, by \eqref{rutgers2} for every $x\in E^{(1)}$ with $\dist(x,F) >1$ there exists $r_x$ such that \eqref{rutgers2} holds.
Applying the Besicovitch covering theorem to $\F = \{ \overline{B_{r_x}(x)}: x\in E^{(1)}, \; \dist(x,F) > 1\}$ we find a countable disjoint subfamily $\F'$ of $\F$ such that
\begin{equation*}
\begin{split}
\Big| \big\{ x\in E : \dist(x,F)>1\big\} \Big|
& \leq
\xi(n) \sum_{\overline{B_{r_x}(x)} \in \F'} |E\cap B_{r_x}(x)|
\\& \leq \frac{(1-s)\xi(n)}\a \sum_{\overline{B_{r_x}(x)} \in \F'}  \int_{E\cap B_{r_x}(x)}\int_{E^c}\frac{dz\,dy}{|z-y|^{n+s}}
\\&
\leq \frac{(1-s)\xi(n)}\a P_s(E).
\end{split}
\end{equation*}
Thanks to our choice of $\alpha$ and to \eqref{eqn:eps}, the right-hand side equals $\e/2$ and this contradicts \eqref{eqn:contr-F}.

Finally, we define $\{x_i\}_{i \in I}$ inductively. First, we define $x_1$ applying the claim with $F=\emptyset$. Then, inductively, we assume that we have chosen $I = \{ x_i\}_{i=1,...,s}$ and we consider whether
$$\Big| E \setminus \bigcup_{x\in I} B_2(x)\Big| <\e$$
holds or not. If this holds, the set $I$ satisfies the properties required by our lemma; otherwise, we apply the claim with $F = \cup_{j=1}^i \overline{B_1(x_j)}$, to find $x_{s+1}$ such that \eqref{eqn:density-nucleata} holds and such that its distance from $\{ x_1,..., x_s\}$ is at least $2$. Since $|E|<\infty$, this process ends in finitely many steps.
\end{proof}

\subsection{Existence of isoperimetric clusters}\label{section proof of existence} In this section we prove the existence statement in Theorem \ref{thm main}:

\begin{theorem}\label{thm existence}
  If $n,N\ge 2$, $s\in(0,1)$, and $m\in\R^N_+$, then there exist minimizers in the variational problem
  \begin{equation}
  \label{isoperimetric problem fractional existence 2}
  \g=\inf \big\{ P_s(\E): \E \mbox{ is an $N$-cluster in $\R^n$ with }m (\E) = m \big\}\,.
  \end{equation}
  Moreover, if $\E$ is a minimizer, then ${\rm diam}(\pa\E)<\infty$.
\end{theorem}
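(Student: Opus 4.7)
The plan is to adapt Almgren's four-step existence scheme to the nonlocal setting by combining the four tools just developed---nucleation, truncation, volume-fixing, and the boundedness criterion. Start from a minimizing sequence $\{\E_k\}_{k\in\N}$ with $m(\E_k)=m$ and $P_s(\E_k)\to\g$, and fix a small parameter $\e>0$ to be chosen later. Applying the nucleation Lemma~\ref{lemma:nucleation} to each chamber $\E_k(h)$ (whose $s$-perimeter is uniformly bounded by $2P_s(\E_k)$ and whose volume equals $m_h$) yields, for every $(k,h)$, a finite set $I_k^h\subset\R^n$ with $\#I_k^h$ bounded by a constant $L_0$ independent of $k$, such that
\[
\Big|\E_k(h)\setminus\bigcup_{x\in I_k^h}B_2(x)\Big|<\e,\qquad |\E_k(h)\cap B_1(x)|\ge c_0\quad\forall x\in I_k^h,
\]
for some $c_0=c_0(\e)>0$ uniform in $k$; set $I_k=\bigcup_h I_k^h$, so $\#I_k\le N L_0$.

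Next, apply the truncation Lemma~\ref{lemma truncation} with $F_k=\bigcup_{x\in I_k}\overline{B_2(x)}$ and $\tau=N\e$: this produces a cluster $\E_k'$ contained in an $R$-neighborhood of $I_k$ (with $R=2+C_1(N\e)^{1/n}$ independent of $k$) satisfying
\[
(1-s)\,P_s(\E_k')\le (1-s)\,P_s(\E_k)-\frac{d(\E_k,\E_k')}{C_2(N\e)^{s/n}},\qquad |m(\E_k')-m|\le d(\E_k,\E_k')\le 2N\e.
\]
For $\e$ small, each chamber of $\E_k'$ still contains at least $c_0/2$ of mass in some ball $B_R$ around a point of $I_k^h$, so hypothesis \eqref{hp:some-mass} of the volume-fixing Proposition~\ref{prop:vol-fix-k} holds uniformly in $k$. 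Applying it we restore the volumes, obtaining $\E_k''$ with $m(\E_k'')=m$ and $P_s(\E_k'')\le P_s(\E_k')+C\,P_s(\E_k')\,|m-m(\E_k')|$ for a constant $C$ depending only on $m$, $s$, $c_0$. Choosing $\e$ small enough that the truncation gain $d(\E_k,\E_k')/[(1-s)C_2(N\e)^{s/n}]$ dominates the volume-fixing cost $C\,P_s(\E_k')\,d(\E_k,\E_k')$, the sequence $\{\E_k''\}$ remains minimizing, satisfies $m(\E_k'')=m$, and meets the hypotheses \eqref{hp:confinement}--\eqref{eqn:un-poco-di-massa} of Lemma~\ref{lemma:boundedness} with constants uniform in $k$.

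By Lemma~\ref{lemma:boundedness}, after extracting a subsequence and performing a translation, $\E_k''\subseteq B_{R_0}$ for some fixed $R_0$. The uniform bound on $P_s$ then gives $L^1_{\rm loc}$-precompactness, and the confinement together with the volume constraint upgrades this to $L^1(\R^n)$-convergence; a further subsequence converges to a cluster $\E$ with $m(\E)=m$, and by the standard lower semicontinuity of $P_s$ under $L^1_{\rm loc}$-convergence $P_s(\E)\le\g$, so $\E$ is a minimizer.

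For the diameter bound, the same truncation--volume-fixing combination applied directly to a minimizer $\E$, using now the static Proposition~\ref{prop:vol-fix} (the reference cluster is fixed, and a system of interface points can be chosen inside a large ball where $\E$ has positive-volume chambers meeting $\E(0)$), shows that any chamber mass escaping a sufficiently large ball would produce a strictly better competitor, contradicting $P_s(\E)=\g$; hence ${\rm diam}(\pa\E)<\infty$. The principal obstacle throughout is the constant balancing in the second step: since the truncation gain scales like $d(\E_k,\E_k')/\e^{s/n}$ and the volume-fixing cost like $d(\E_k,\E_k')$ times $P_s(\E_k')$, one needs the volume-fixing constant to be $k$-independent even though the nuclei of $\E_k''$ may be arbitrarily far apart before Lemma~\ref{lemma:boundedness} is invoked, and it is precisely this requirement that forces the use of the more elaborate Proposition~\ref{prop:vol-fix-k} rather than its static counterpart.
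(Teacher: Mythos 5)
Your overall outline reproduces the paper's strategy (nucleation, truncation, volume-fixing, boundedness, compactness), but it has a genuine gap in the constant-balancing step that the paper's proof is specifically organized to avoid.

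The issue is that you use a single parameter $\e$ both as the nucleation scale and as the truncation parameter. The nucleation Lemma~\ref{lemma:nucleation} then gives a density lower bound $c_0=c_0(\e)$ that degenerates as $\e\to 0^+$, and the constant $C$ produced by Proposition~\ref{prop:vol-fix-k} (via Lemma~\ref{lemma:linkedchambers} and the implicit-function argument) depends on this $c_0$: as the guaranteed chamber masses in the nucleation balls shrink, the admissible range $\eta$ shrinks and the Lipschitz constant $C$ of the volume-fixing map grows. When you then try to choose $\e$ so small that the truncation gain $\propto 1/\e^{s/n}$ dominates the volume-fixing cost $\propto C(\e)$, you face a circular dependency: there is no guarantee that $C(\e)\,\e^{s/n}\to 0$, and you give no argument for it. This is not a cosmetic issue; without it the new sequence $\{\E_k''\}$ need not be minimizing.

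The paper's proof sidesteps this by running the nucleation twice at decoupled scales. In Step one it nucleates with a \emph{fixed} $\e$ of size $\min\{m_{\min},\frac{1-s}{\chi_1\chi_2}p_{\min}\}^{n/s}$, producing a density lower bound $c$ and hence constants $\eta, C$ in Proposition~\ref{prop:vol-fix-k} that depend only on $n,s,m$ and $\gamma$, not on any parameter yet to be chosen. Only afterwards, in Step two, a second ``fine'' nucleation with a new small parameter $\e_0\le\eta$ is performed, the truncation is applied at scale $\e_0$, and the gain $1/\e_0^{s/n}$ now beats a $C$ that is frozen from Step one. Two further, smaller points: the volume-fixing balls must have the specific radius $S$ with $\omega_n S^n=2\sum_h m(h)$ (or larger) so that the exterior chamber $\E_k(0)$ occupies at least half of each $B_S(x_k(h))$, which is what lets Lemma~\ref{lemma:linkedchambers}(ii) be invoked; you gloss over this by speaking of generic balls $B_R$. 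And Proposition~\ref{prop:vol-fix-k} is stated for the sequence $\{\E_k\}$ with $m(\E_k)=m$, so one builds $\Psi_k$ on $\E_k$ and only then observes (using $\{\Psi_k(\bold a,\cdot)\ne\mathrm{id}\}\subset F_k\subset\{u_k\le r_k\}$) that it acts as desired on the truncated cluster $\E_k'$; applying the proposition ``to $\E_k'$'' directly, as you write, is not literally permissible since $m(\E_k')\ne m$.
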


\begin{proof}
By explicit comparison with a cluster whose chambers are $N$ disjoint balls with suitable volumes we find that $\gamma<\infty$. Let us consider a minimizing sequence sequence $\{\E_k\}_{k\in \N}$ such that
$$ \lim_{k\to \infty}  P_s(\E_k) = \gamma \qquad m\,(\E_k)= m \qquad \forall k\in \N\,.$$
Let us set
$$m_{\min}= \min\{ m(h): 1\leq h\leq N\} \qquad m_{\max}= \max \{ m(h): 1\leq h\leq N\} $$
$$p_{\min}= \inf\{ P_s( \E_k(h)): 1\leq h\leq N, \; k\in \N\} \qquad m_{\max}= \sup\{ P_s( \E_k(h)): 1\leq h\leq N, \; k\in \N\}$$
so that $p_{\min} \geq P_s(B_1) m_{\min}^{(n-s)/n} /|B_1|^{(n-s)/n}>0$ by the isoperimetric inequality and $p_{\max} <\infty$ since $\gamma<\infty$.

\medskip

\noindent {\it Step one: first nucleation and construction of volume-fixing diffeomorphisms}. We apply the nucleation Lemma~\ref{lemma:nucleation} with $E= \E(h)$ and 
$\e = \min \{m_{\min}, \frac{1-s}{\chi_1\chi_2} p_{\min} \}^{n/s}$ (where $\chi_1$ and $\chi_2$ depend only on $n$ and $s$ and are defined in \eqref{defn:xi12}). We obtain that there exist sequences $\{x_k(h)\}_{k\in \N}$ ($1\leq h\leq N$), such that for every $k\in \N$ and $1\leq h\leq N$
\begin{equation}
\label{eqn:nucl-primo}
\Big| \E_k(h) \cap B_1(x_k(h))\Big| \geq c,
\end{equation}
where $c$ depends only on $n,s, m_{\min}, p_{\max}$.
If we define $S$ by $\omega_n S^n = 2(m(1)+...+m(N))$, then at least half of the volume in $B_S(x_k(h))$ is occupied by the exterior chamber $\E_k(0)$, that is
$$ \big| \E_k(0) \cap B_S(x_k(h)) \big| \geq \frac{\omega_n S^n}{2}.$$
We apply 
Proposition~\ref{prop:vol-fix-k} to obtain the existence of positive constants $\eta<c_0/2$ and $C$ such that, up to extracting a not-relabeled subsequence in $k$, there exist $C^1$ functions
$$\Psi_k : ((-\eta, \eta)^N \cap V) \times \R^n \to \R^n$$
such that
for every $\bold a \in (-\eta,\eta)^{N+1}\cap V$ the map $\Psi_k(\bold a, \cdot): \R^n \to \R^n$ is a diffeomorphism with
\begin{equation}
\label{eqn:supp-psi-k}
\{ x\in \R^n : \Psi_k(\bold a,x) \neq x\} \subset  \cup_{h=1}^N B_S(x_k(h)) \cc \R^n
\end{equation}
\begin{equation}
\label{eqn:volexc-psi-k}
\Big| \Psi_k(\bold a, \E_k(h)) \cap \{ x\in \R^n : \Psi_k(\bold a,x) \neq x\}\Big| = \Big| \E_k(h) \cap \{ x\in \R^n : \Psi_k(\bold a,x) \neq x\}\Big| + \bold a (h);
\end{equation}
\begin{equation}
\label{eqn:psi-k-perimeter}
|P_s(\Psi_k(\bold a , F))- P_s(F)| \leq CP_s(F) \sum_{h=0}^N |\bold a (h)|,
\end{equation}
whenever $1\leq h\leq N$, $k\in \N$, and $F$ is a set of finite $s$-perimeter.

\medskip

\noindent {\it Step two: Fine nucleation of the cluster}. Let $\chi_1$ and $\chi_2$ be the constants in \eqref{defn:xi12}.  We prove that there exists a sequence of clusters $\{\E''_k\}_{k\in \N}$ such that for $k$ large enough
\begin{equation}
\label{eqn:pureminimizing}P_s(\E_k'') \leq P_s(\E)
\end{equation}
and
there are $r_0,\e_0>0$ and finitely many points $\{x_k(h,i)\}_{i=1,...,L(k,h)}$ with the property that
\begin{equation}
\label{eqn:e''1}\E''_k(h) \subseteq B_{r_0}(x_k(h)) \cup \bigcup_{i=1}^{L(k,h)} B_{r_0}(x_{k}(h,i)) \end{equation}
\begin{equation}
\label{eqn:e''2}\Big| \E''_{k}(h) \cap B_S(x_k(h))\Big| \geq \frac{c_0}{2} \qquad \mbox{for every } h=1,...,N
\end{equation}
\begin{equation}
\label{eqn:e''3}\sum_{j=1}^N\Big| \E''_{k}(j) \cap B_{r_0}(x_{k}(h,i))\Big| \geq \min\Big\{ \frac{c_0}{2} , \Big(\frac{\chi_1\,\e_0}{(1-s)\, p_{\max}}\Big)^{n/s} \Big\}\quad \mbox{for } i =1,...,L(k,h),\; h=1,...,N,
\end{equation}
$$L(k,h) \leq m_{\max} \Big(\frac{(1-s)\,p_{\max}}{\chi_1\,\e_0}\Big)^{n/s}.
$$
To this end, let $\e_0>0$ be such that
 \begin{equation}
  \label{eqn:eps0}
  \e_0 \leq \min \Big\{\eta, m_{\min}, \frac{1-s}{\chi_1\chi_2} p_{\min} \Big\}
  \end{equation}
 and, for every $k\in \N$ and $h=1,...,N$, let us apply Lemma~\ref{lemma:nucleation} to each chamber $\E_k(h)$ for finding finitely many points $\{ x_k(h,i)\}_{i=1,...,L(h,i)}$ with the property that
\begin{equation}
\label{eqn:almostcover}
\Big| \E_{k}(h) \setminus \bigcup_{i=1}^{L(k,h)} B_2(x_k(h,i))\Big| <\e_0,
\end{equation}
\begin{equation}
\label{eqn:unpocodimassa}
\Big| \E_{k}(h) \cap B_1(x_k(h,i))\Big| \geq \Big(\frac{\chi_1\,\e_0}{(1-s)\, p_{\max}}\Big)^{n/s} \qquad \forall i =1,...,L(h,i),
\end{equation}
$$L(k,h) \leq |\E_k(h)| \Big(\frac{(1-s)\,P_s(\E_{k}(h))}{\chi_1\,\e_0}\Big)^{n/s} \leq m_{\max} \Big(\frac{(1-s)\,p_{\max}}{\chi_1\,\e_0}\Big)^{n/s}.
$$
Next, for every $k\in \N$ we consider the closed set $F_k \subset \R^n$ given by
$$F_k:= \bigcup_{h=1}^N \Big( \overline B_S(x_k(h)) \cup \bigcup_{i=1}^{L(k,h)}\overline B_S(x_k(h,i)) \Big)
$$
and then we apply Lemma~\ref{lemma truncation} with $\tau = \e_0$ to each $\E_k$ and $F_k$. We set $C_1$ and $C_2$ as in \eqref{c1c2} depending only on $n$ and $s$, and we introduce the function $u_k =\dist(x,F_k)$ to find a sequence $\{r_k\}_{k\in \N} \subset [0, C_1\,\e_0^{1/n}]$ such that the clusters $\E_k'$ defined by
$$\E_k' (h) = \E_k(h) \cap \{ u_k \leq r_k\}, \qquad 1\leq h \leq N$$
satisfy
 \begin{equation}
   \label{tr1-applied}
 (1-s)\,P_s(\E_k' )\le (1-s)\,P_s(\E_k)-\frac{\dist(\E_k, \E_k')}{C_2\,\e_0^{s/n}}\,
 \end{equation}
(in particular $\lim_{k\to \infty} P_s(\E_k') = \gamma$). Finally, we set
$$\bold a_k(h) := |\E_k(h)| - |\E_k'(h)| = |\E_k(h) \cap \{ u_k >r_k\} | \qquad 1 \leq h \leq N, \qquad \bold a_k(0):= \sum_{h=1}^N \bold a_k (h).$$
By \eqref{eqn:almostcover} we have that $\bold a_k(h)\leq \e_0\leq \eta$, hence we can define
$$\E_k''(h) := \Psi_k(\bold a_k, \E'_k(h)) \qquad 1\leq h\leq N.$$
By \eqref{eqn:supp-psi-k} it follows that
$\{ x\in \R^n : \Psi_k(\bold a,x) \neq x\} \subset F_k \subset \{ u_k\leq r_k\}$, and thus for every $k \in \N$ and $h=1,.., N$ we have
$$
\Psi_k( \bold a_k, \E_k(h)) \cap \{ u_k \leq r_k\} = \Psi_k( \bold a_k, \E'_k(h)) \cap \{ u_k \leq r_k\} = \Psi_k( \bold a_k, \E'_k(h))\,,
$$
and
\begin{equation*}
\begin{split}
|\Psi_k(\bold a_k, \E'_k(h))| &= |\Psi_k( \bold a_k, \E_k(h)) \cap \{ u_k \leq r_k\}| = |\Psi_k( \bold a_k, \E_k(h))| - |\E_k(h) \cap \{u_k >r_k\} |
\\
&= |\Psi_k( \bold a_k, \E_k(h))| - \bold a_k = |\E_k(h)|,
\end{split}
\end{equation*}
that is, $m( \E''_k) = m(\E_k)= m$. We notice that \eqref{eqn:e''1} holds with $r_0= 2S+1+ C_1\,\e_0^{1/n}$. To prove \eqref{eqn:e''2}, we observe that
\begin{equation*}
\Big| \E''_{k}(h) \cap B_S(x_k(h))\Big| \geq\Big| \E_{k}(h) \cap B_S(x_k(h))\Big|- \bold a_k(h) \geq c_0 - \eta \geq \frac{c_0}{2}
\end{equation*}
To see that also \eqref{eqn:e''3} holds, given $h=1,...,N$ and $i=1,...,L(k,h)$, we consider two separate cases: if $B_1(x_k(h,i))$ intersects a ball $B_S(x_k(l))$ for some $l=1,...,N$, then $B_S(x_k(l)) \subseteq B_{r_0}(x_k(h,i)) $ and therefore
$$\sum_{j=1}^N\Big| \E''_{k}(j) \cap B_{r_0}(x_{k}(h,i))\Big| \geq | \E''_{k}(l) \cap B_{r_0}(x_k(l))\Big| \geq \frac{c_0}{2}$$
by \eqref{eqn:e''2}; if, instead, $B_1(x_k(h,i))$ does not intersect any of the balls $B_S(x_k(l))$, $l=1,...,N$, then \eqref{eqn:unpocodimassa} gives
\begin{equation*}
\begin{split}
\Big| \E''_{k}(h) \cap B_{r_0}(x_{k}(h,i))\Big|
&\geq \Big| \E''_{k}(h) \cap B_1(x_k(h,i))\Big|
\\
&=\Big| \E_{k}(h) \cap B_1(x_k(h,i))\Big|\geq
\Big(\frac{\chi_1\,\e_0}{(1-s)\, p_{\max}}\Big)^{n/s}
\end{split}
\end{equation*}
and thus \eqref{eqn:e''3} holds. Finally, we apply \eqref{eqn:psi-k-perimeter} to $\E_k'(h)$ and, using also \eqref{tr1-applied} and the equality $\sum_{h=0}^N |\bold a (h)| = \dist(\E_k, \E_k')$, we find that
\begin{equation*}
\begin{split}
P_s(\E_k'') &=P_s(\Psi_k(\bold a , \E_k')) \leq P_s(\E_k')+ |P_s(\Psi_k(\bold a , \E_k'))- P_s(\E_k'(h))|
\\
& \leq P_s(\E_k')+C P_s(\E_k') \sum_{h=0}^N |\bold a (h)|
\\
&\leq P_s(\E_k) -\frac{\dist(\E_k, \E_k')}{C_2(1-s)\,\e_0^{s/n}} +C P_s(\E_k') \sum_{h=0}^N |\bold a (h)|
\\
&\leq P_s(\E_k) -\frac{\dist(\E_k, \E_k')}{C_2(1-s)\,\e_0^{s/n}} +2 C\gamma  \dist(\E_k, \E_k')
\end{split}
\end{equation*}
which proves \eqref{eqn:pureminimizing} provided that we choose $\e_0$ small enough.

\medskip

\noindent {\it Step 3: boundedness of the new minimizing sequence, compactness and lower semicontinuity argument}. We conclude the proof. Lemma~\ref{lemma:boundedness} applied to the sequence of clusters $\E_k''$ with $R= r_0$ and $c=\min\{ c_0/2, [{\chi_1\,\e_0}{(1-s)^{-1}\, p^{-1}_{\max}}]^{n/s} \}$ implies that there exists $R_0>0$ such that, up to a subsequence not relabeled, $\E_k \subseteq B_{R_0}$ for every $k\in\N$. Therefore, each chamber $\E_k(h)$, $h=1,...,N$, converges in $L^1$ to a set $\E(h)$ which has volume $m(\E(h))= m(h)$ and perimeter $P_s(\E(h)) \leq \liminf_{k\to \infty} P_s(\E_k(h))$, by the lower semicontinuity of $P_s$ with respect to $L^1$ convergence of sets.
Hence
$$P_s(\E) = \sum_{h=0}^N P_s(\E(h)) \leq \sum_{h=0}^N \liminf_{k\to \infty} P_s(\E_k(h)) \leq \liminf_{k\to \infty} \sum_{h=0}^N P_s(\E_k(h)) = \gamma,$$
which proves that $\E$ is a minimizer for problem \eqref{isoperimetric problem fractional existence 2}.
\end{proof}

\section{Almost everywhere regularity}\label{section regularity}
We now address the regularity statements in Theorem \ref{thm main}, with the goal of proving the following statement:

\begin{theorem}\label{thm regularity}
If $n\ge2$ and $\E$ is an isoperimetric $N$-cluster in $\R^n$ (that is, $P_s(\E)\le P_s(\F)$ whenever $m(\F)=m(\E)$), then
there exists $\alpha \in (0,1)$ and a closed set $\S(\E)\subset\pa\E$ such that $\H^{n-2}(\S(\E))=0$ if $n\ge 3$, $\S(\E)$ is discrete if $n=2$, and for every $x\in \pa\E\setminus\S(\E)$ there exists $r_x>0$ such that $\partial \E \cap B_{r_x}(x)$ is a $C^{1,\alpha}$-hypersurface in $\R^n$.
In particular, $\partial \E$ is a locally $\H^{n-1}$-rectifiable set in $\R^n \setminus \S(\E)$ and it has Hausdorff dimension $n-1$.
\end{theorem}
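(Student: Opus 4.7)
The plan is to adapt to the cluster setting the extension-based regularity theory of \cite{caffaroquesavin} for $s$-minimal boundaries, following the outline in the introduction. First, for each chamber $\E(h)$, $h=0,\dots,N$, introduce the Caffarelli--Silvestre extension $w_h:\R^{n+1}_+\to[0,1]$ solving $\Div(y^{1-s}\nabla w_h)=0$ in $\R^{n+1}_+$ with boundary datum $1_{\E(h)}$ on $\R^n\times\{0\}$. The rescaled total Dirichlet energy
\[
\Phi(x,r):=\frac{1}{r^{n-s}}\sum_{h=0}^{N}\int_{B_r^+(x)}y^{1-s}\,|\nabla w_h|^2\,dy\,dx
\]
is, up to the boundary identity relating $y^{1-s}|\nabla w_h|^2$ to the fractional kernel, comparable with $r^{-(n-s)}P_s(\E;B_r(x))$; the first task is to prove that $r\mapsto\Phi(x,r)$ is monotone nondecreasing at every $x\in\pa\E$. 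Monotonicity follows, via the standard scaling-plus-comparison argument, from the local almost-minimality of $\E$: any cluster $\E'$ agreeing with $\E$ outside $B_r(x)$ satisfies $P_s(\E;B_r(x))\le P_s(\E';B_r(x))+C\,r^{n-s}$, where the error arises from restoring the volumes to $m(\E)$ through Proposition~\ref{prop:vol-fix}, applied at a fixed system of interface points placed far from $x$.

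With monotonicity in hand, define for each $x\in\pa\E$ the density $\Theta(x):=\lim_{r\to 0^+}\Phi(x,r)$ together with the partial densities $\Theta_h(x):=\lim_{r\to 0^+}r^{-(n-s)}\int_{B_r^+(x)}y^{1-s}|\nabla w_h|^2$. Decompose $\pa\E=\Sigma_2\cup\S(\E)$, where $\Sigma_2$ collects those $x$ at which only two of the $\Theta_h(x)$ are positive, and $\S(\E)$ is the complement. For $x\in\Sigma_2$, a blow-up argument using $L^1_{\loc}$-compactness of minimizing clusters (section~\ref{section notation}) together with the monotonicity formula shows that, in a sufficiently small ball $B_\rho(x)$, only two chambers $\E(h)$ and $\E(k)$ carry significant mass. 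A further application of Proposition~\ref{prop:vol-fix}, this time to absorb local volume corrections into chambers lying far from $x$ at a perimeter cost linear in the correction, then shows that $\E(h)\cap B_\rho(x)$ is an almost-minimizer of $P_s$ in $B_\rho(x)$ in the sense required by \cite{caputoguillen}. The main theorem of that reference yields $\pa\E(h)\cap B_{\rho/2}(x)\in C^{1,\a}$, and since locally $\pa\E=\pa\E(h)\cup\pa\E(k)$ with the two pieces sharing the same regular interface, the same $C^{1,\a}$-conclusion propagates to $\pa\E$.

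To estimate the size of $\S(\E)$, I would run a Federer-type dimension-reduction argument on tangent cones. By the monotonicity formula and the compactness of locally almost-minimizing clusters, blow-up limits at points of $\S(\E)$ are minimizing cluster cones with at least three nonempty chambers meeting at the origin; iteratively blowing up at singular points of such cones one reduces to the classification of minimizing cluster cones in low dimension. In $\R^1$ no nontrivial minimizing cluster with more than two chambers exists, and in $\R^2$ the only candidate is the $Y$-configuration (three half-lines meeting at $120^\circ$), exactly as in the classical Almgren--Taylor theory. This yields $\H^{n-2}(\S(\E))=0$ when $n\ge 3$ and discreteness of $\S(\E)$ when $n=2$. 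Boundedness of $\pa\E$ is already part of Theorem~\ref{thm existence}.

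The principal obstacles are two. First, deriving a clean monotonicity formula in the cluster setting: the $N+1$ extensions $w_h$ interact, and the boundary identity that converts the Dirichlet energy on $B_r^+(x)$ into a multiple of $P_s(\E;B_r(x))$ must track all cross-terms carefully, while the competitor construction used in the comparison step has to cope with the volume constraint $m(\E')=m(\E)$. Second, and more delicate, is upgrading the two-chamber picture at a point of $\Sigma_2$ into a quantitative almost-minimality property of a single chamber in the hypotheses of \cite{caputoguillen}: the nonlocality of $P_s$ prevents a purely local comparison, and this is precisely the place where Proposition~\ref{prop:vol-fix} is indispensable, because it permits the redirection of volume corrections to a distant part of the cluster at controlled perimeter cost.
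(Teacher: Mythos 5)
The proposal follows the paper's overall architecture (extension problem, monotonicity formula, local almost-minimality, Federer dimension reduction), but the decomposition of $\pa\E$ you use introduces a genuine gap. You split $\pa\E$ into $\Sigma_2$, the points where exactly two of the partial densities $\Theta_h$ are positive, and its complement $\S(\E)$. Having only two chambers present near $x$ does not by itself imply that $\pa\E$ is $C^{1,\alpha}$ near $x$. Already for $N=1$, almost-minimizers of $P_s$ may have singular boundary points, and any such point would fall inside your $\Sigma_2$. The theorem from \cite{caputoguillen} you want to invoke is an improvement-of-flatness statement: it requires that the boundary be sufficiently flat in a ball around $x$ before it returns $C^{1,\alpha}$ regularity, and your $\Sigma_2$ condition alone does not supply that flatness. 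The paper sidesteps this by defining ${\rm Reg}(\E)$ as the set of $x\in\pa\E$ whose blow-ups $\E^{x,r}$ converge in $L^1_{\loc}$ to a pair of complementary half-spaces, a strictly stronger condition than membership in $\Sigma_2$: it ensures both the two-chamber localization (via the infiltration lemma, Lemma~\ref{lemma:infiltration}) and the flatness hypothesis of \cite{caputoguillen}. Correspondingly, the paper's singular set $\S(\E)=\pa\E\setminus{\rm Reg}(\E)$ contains both the three-chamber points and the possibly singular two-chamber points, and the dimension-reduction argument (Theorem~\ref{thm:sing-set-dim}, driven by the triviality of conical minimizing clusters in $\R^1$) bounds the dimension of this entire larger set by $n-2$, whereas your dimension reduction, as written, only controls the three-chamber set.

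Two secondary issues. First, the classification of planar singular cones as the $Y$-configuration is not established in the nonlocal setting, nor is it needed: the paper uses only the fact from \cite{savinvaldinoci} that every conical minimizing $2$-cluster in $\R^2$ is a pair of complementary half-planes. Second, for $n=2$ the dimension estimate gives $\S(\E)$ Hausdorff dimension zero, but that alone does not yield discreteness; the paper proves discreteness separately (Proposition~\ref{prop:dim2}) by a compactness and blow-up argument ruling out accumulation of singular points.
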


The proof is divided in two parts. In section \ref{section lambdar0 mininimizing} we prove the $C^{1,\a}$-regularity of $\pa\E$ nearby points where $\E$ blows-up two complementary half-spaces. In section \ref{section extension}, following the approach of \cite{caffaroquesavin}, we estimate the dimensionality of the subset of $\pa\E$ where this blow-up property does not hold.

\subsection{Regular part of the boundary}\label{section lambdar0 mininimizing}
Given a $N$-cluster $\E$, $x\in\R^n$ and $r>0$ the {\it blow-up of $\E$ at $x$ at scale $r$} is the $N$-cluster $\E^{x,r}$ defined by
\[
\E^{x,r}(h)=\frac{\E(h)-x}r\,,\qquad h=1,...,N\,.
\]
The {\it regular set} ${\rm Reg}(\E)$ of $\E$ is the set of those $x\in\pa\E$ such that there exist an open half-space $H\subset\R^n$ and $h$, $k\in\{0,...,N\}$ such that, as $r\to 0^+$ and for every $j\ne h,k$,
\begin{equation}
  \label{regular point}
  \E^{x,r}(h)\to H\qquad\E^{x,r}(k)\to \R^n\setminus H\qquad \E^{x,r}(j)\to\emptyset\qquad\mbox{in $L^1_{{\rm loc}}(\R^n)$.}
\end{equation}
Our goal is proving that if $\E$ is an isoperimetric cluster, then ${\rm Reg}(E)$ is a $C^{1,\a}$-hypersurface in $\R^n$ which is relatively open in $\pa\E$.

We shall actually prove this fact for a larger class of clusters. Given an open set $A\subset\R^n$, $\Lambda\ge0$ and $r_0\in(0,\infty]$, we say that an $N$-cluster $\E$ is {\it $(\Lambda,r_0)$-minimizing in $A$} (it is tacitly understood that the word {\it minimizing} refers to $s$-perimeter)
\begin{equation}
\label{defn:alm-min}
P_s(\E;A)\le P_s(\F;A)+\frac{\Lambda}{1-s}\,\d(\E,\F)\,,
\end{equation}
whenever $\E\Delta\F\cc B_r(x)\cc A$, $r<r_0$. The use of perturbed minimality conditions such as \eqref{defn:alm-min} has been introduced in \cite{Almgren76} as a natural point of view for unifying regularity theorems. For example, as shown below, every isoperimetric cluster is $(\Lambda,r_0)$-minimizing in $\R^n$, but also every minimizer in the nonlocal partitioning problem
\[
\inf\Big\{P_s(\E;A)+\sum_{h=1}^N\int_{\E(h)}\,g_h(x)\,dx:\E(h)\setminus A=\E_0(h)\setminus A\quad h=1,...,N\Big\}
\]
(where $\E_0$ is a given $N$-cluster with $P_s(\E;A)<\infty$ and where $\{g_h\}_{h=1}^N\subset  L^\infty(A)$) is $(\Lambda,r_0)$-minimizing in $A'$ for every $A'\cc A$ (with $\Lambda$ and $r_0$ depending on the functions $g_h$ and on the distance between $A'$ and $A$). So minimizers in different variational problems satisfy analogous local almost-minimality conditions, which in turn imply several basic regularity properties.

\begin{proposition}\label{cor:almost-min}
  If $\E$ is an isoperimetric cluster in $\R^n$, then there exist constants $\Lambda\ge0$ and $r_0>0$ (depending on $\E$) such that $\E$ is $(\Lambda,r_0)$-minimizing in $\R^n$.
\end{proposition}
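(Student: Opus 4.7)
The plan is to apply the volume-fixing variations of Proposition~\ref{prop:vol-fix} to correct any localized competitor $\F$ to a true (volume-preserving) competitor $\F'$, and then to exploit the global isoperimetric minimality $P_s(\E) \le P_s(\F')$. First, I apply Lemma~\ref{lemma:linkedchambers}(ii) with $A = \R^n$ (legitimate since $|\E(0)|=\infty$ and $0<|\E(h)|<\infty$ for $h=1,\dots,N$) to extract a system of interface points $\{z_\alpha\}_{\alpha=1}^M$ of $\E$. Then I fix $r_1 > 0$ below a quarter of the minimum pairwise distance among the $z_\alpha$ and invoke Proposition~\ref{prop:vol-fix} to produce uniform constants $\eta, \epsilon_2, C > 0$ and, for each cluster $\F$ with $d(\E,\F) < \epsilon_2$, a diffeomorphism family $\Phi_\F$ satisfying properties (i)--(iii).

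Next, I pick $r_0 > 0$ with $|B_{r_0}| < \min\{\epsilon_2/2,\eta/4\}$. Given any competitor $\F$ with $\E \Delta \F \cc B_r(x) \cc \R^n$, $r < r_0$, if $P_s(\F) \ge P_s(\E) =: \gamma$ the almost-minimality inequality \eqref{defn:alm-min} is trivial for any $\Lambda \ge 0$, so I assume $P_s(\F) < \gamma$. I define $\bold b \in V$ by $\bold b(h) := |\F(h)| - |\E(h)|$ for $1 \le h \le N$ and $\bold b(0) := -\sum_{h=1}^N \bold b(h)$. Using $|\bold b(h)| \le |\F(h) \Delta \E(h)|$ for $h \ge 1$ and $|\bold b(0)| \le \sum_{h \ge 1}|\bold b(h)|$, together with $\F \Delta \E \cc B_{r_0}$, I obtain $d(\E,\F) \le 2|B_{r_0}| < \epsilon_2$ and $\sum_{h=0}^N|\bold b(h)| \le 2 d(\E,\F) < \eta$, which are precisely the hypotheses required to invoke Proposition~\ref{prop:vol-fix} with $\E' = \F$.

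Setting $\F'(h) := \Phi_\F(-\bold b, \F(h))$ and $S := \{x : \Phi_\F(-\bold b, x) \ne x\}$, since $\Phi_\F(-\bold b, \cdot)$ is the identity outside $S$, property (ii) of Proposition~\ref{prop:vol-fix} yields
\[
|\F'(h)| - |\F(h)| = |\F'(h) \cap S| - |\F(h) \cap S| = -\bold b(h),
\]
so $m(\F') = m(\E)$ and hence $P_s(\E) \le P_s(\F')$ by isoperimetric minimality. Applying property (iii) to each chamber of $\F$ and summing gives
\[
|P_s(\F') - P_s(\F)| \le C\, P_s(\F) \sum_{h=0}^N |\bold b(h)| \le 2C\gamma\, d(\E,\F),
\]
using $P_s(\F) \le \gamma$ and $\sum_h |\bold b(h)| \le 2\, d(\E,\F)$. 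Combining the two inequalities produces \eqref{defn:alm-min} in $A = \R^n$ with $\Lambda := 2C\gamma(1-s)$.

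The main subtlety is that the support $S$ of the volume-fixing variation can overlap the ball $B_r(x)$ where $\F$ differs from $\E$, so one cannot simply substitute $\F=\E$ on $S$ in the volume calculation. This is not a real obstruction because property (ii) of Proposition~\ref{prop:vol-fix} is stated for every cluster $\E'$ with $d(\E, \E') < \epsilon_2$, not only for $\E$ itself, so the identity $|\Phi_\F(-\bold b, \F(h)) \cap S| = |\F(h) \cap S| - \bold b(h)$ is available regardless of overlap; if disjoint supports were really needed one could instead invoke Lemma~\ref{lemma:linkedchambers}(ii) with large $\delta$ to produce two well-separated systems of interface points and always apply the one whose support misses $B_r(x)$.
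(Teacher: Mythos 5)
Your argument is correct and uses precisely the two ingredients the paper invokes in its one-line proof (``Immediate from Proposition~\ref{prop:vol-fix} and Lemma~\ref{lemma:linkedchambers}''): Lemma~\ref{lemma:linkedchambers}(ii) with $A=\R^n$ to obtain a system of interface points, then Proposition~\ref{prop:vol-fix} to restore the volume of a localized competitor and compare with $\E$ via global isoperimetric minimality. The subtlety you flag at the end -- that the volume-fixing support may overlap $B_r(x)$ -- is correctly dispatched by noting that Proposition~\ref{prop:vol-fix}(ii) is stated for any $\E'$ with $d(\E,\E')<\e_2$, not only $\E'=\E$, so the two-system device from Lemma~\ref{lemma:linkedchambers}(ii) is available but not needed.
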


\begin{proof} Immediate from Proposition \ref{prop:vol-fix} and Lemma \ref{lemma:linkedchambers}.
\end{proof}

As explained at the beginning of the section, we aim to prove the following result.

\begin{theorem}\label{thm reg part 1}
  If $\E$ is a $(\Lambda,r_0)$-minimizing cluster in $\R^n$, then there exists $\a\in(0,1)$ such that ${\rm Reg}(\E)$ is a $C^{1,\a}$-hypersurface in $\R^n$ which is relatively open in $\pa\E$.
\end{theorem}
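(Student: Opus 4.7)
The plan is to deduce the $C^{1,\alpha}$-regularity of ${\rm Reg}(\E)$ from the $C^{1,\alpha}$-regularity theorem of \cite{caputoguillen} for $(\Lambda,r_0)$-almost minimizers of the fractional $s$-perimeter of a \emph{single set}. The reduction relies on two steps: a clearing-out step, showing that near a point of ${\rm Reg}(\E)$ the cluster is effectively two-chambered, and a comparison step, showing that the dominant chamber is itself an almost-minimizer.

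Fix $x_0\in{\rm Reg}(\E)$ and let $h,k\in\{0,\ldots,N\}$ and the half-space $H$ be as in \eqref{regular point}. From \eqref{regular point} we get, for every $\eta>0$, a radius $r_\eta>0$ such that for all $r<r_\eta$,
\[
\sum_{j\ne h,k}|\E(j)\cap B_r(x_0)|\le\eta\,r^n,\qquad
|\E(h)\cap B_r(x_0)|\ge\tfrac12|B_r|-\eta r^n,\qquad |\E(k)\cap B_r(x_0)|\ge\tfrac12|B_r|-\eta r^n.
\]
I would then upgrade this quantitative smallness of the ``wrong'' chambers to exact vanishing. Compare $\E$ with the competitor cluster $\E'$ obtained, for a well-chosen radius $r'\in(r/2,r)$, by reassigning each $\E(j)\cap B_{r'}(x_0)$ ($j\ne h,k$) to $\E(h)$, and leaving every other chamber unchanged. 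The nonlocal isoperimetric inequality
\[
P_s\big(\E(j)\cap B_{r'}(x_0)\big)\ge c(n,s)\,|\E(j)\cap B_{r'}(x_0)|^{(n-s)/n}
\]
beats the linear $\Lambda$-penalty of \eqref{defn:alm-min} when $|\E(j)\cap B_{r'}(x_0)|$ is small, and a coarea selection of $r'$ of the kind used in Lemma \ref{lemma truncation} controls the boundary terms at $\pa B_{r'}(x_0)$. This forces $P_s(\E';B_r(x_0))+(\Lambda/(1-s))\,d(\E,\E')<P_s(\E;B_r(x_0))$ unless already
\begin{equation}\label{plan:empty}
\E(j)\cap B_{r/2}(x_0)=\emptyset\qquad\forall j\ne h,k\,,
\end{equation}
after possibly shrinking $r$ further.

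Once \eqref{plan:empty} is in hand, set $E=\E(h)$. Given any competitor $F$ with $F\Delta E\cc B_\rho(y)\cc B_{r/4}(x_0)$, define a cluster $\F$ for $\E$ by
\[
\F(h)=F,\qquad \F(j)=\E(j)\,\,\mbox{for }j\ne h,k,\qquad \F(k)=\R^n\setminus\Big(F\cup\bigcup_{j\ne h,k}\E(j)\Big)\,.
\]
Using \eqref{plan:empty} one verifies that $\F\Delta\E\cc B_\rho(y)$ and $d(\E,\F)=2|E\Delta F|$, and that
\[
P_s(\F;B_\rho(y))-P_s(\E;B_\rho(y))=P_s(F;B_\rho(y))-P_s(E;B_\rho(y))\,,
\]
because the additional nonlocal interaction terms involve only the chambers $\E(j)$, $j\ne h,k$, which coincide in $\E$ and $\F$. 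Substituting into \eqref{defn:alm-min} turns the cluster almost-minimality of $\E$ into the set almost-minimality
\[
P_s(E;B_\rho(y))\le P_s(F;B_\rho(y))+\frac{2\Lambda}{1-s}\,|E\Delta F|\,,
\]
so that $E$ is a $(2\Lambda,\min\{r_0,r/4\})$-almost minimizer of $P_s$ in $B_{r/4}(x_0)$. Since the blow-up of $E$ at $x_0$ is the half-space $H$, the main theorem of \cite{caputoguillen} produces $\alpha\in(0,1)$ and $\rho_0>0$ such that $\pa E\cap B_{\rho_0}(x_0)=\pa\E\cap B_{\rho_0}(x_0)$ is a $C^{1,\alpha}$-hypersurface. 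Relative openness of ${\rm Reg}(\E)$ in $\pa\E$ is then immediate: at every $y\in\pa E\cap B_{\rho_0}(x_0)$ the blow-ups of $E$ converge to a half-space and, by \eqref{plan:empty}, the other chambers vanish in a neighborhood of $y$, so $y\in{\rm Reg}(\E)$.

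The delicate part is the clearing-out \eqref{plan:empty}. The truncation scheme of Lemma \ref{lemma truncation} is naturally tailored to a single set, whereas here one must simultaneously eliminate $N-2$ small chambers without spoiling the two dominant ones and without creating a long-range cost through the singular kernel that dominates the isoperimetric gain. Controlling these far-field tails, and verifying that the fractional isoperimetric advantage of a low-volume chamber is not erased by them, is where the argument is most technical.
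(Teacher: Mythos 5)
Your overall plan is the same as the paper's: clear out all but two chambers near a regular point, show the dominant chamber is an almost-minimizer of $P_s$ on its own, and then apply \cite{caputoguillen}. But there are two concrete slips.

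First, you flag the clearing-out step \eqref{plan:empty} as ``delicate'' and sketch an ad-hoc truncation, but the paper has already built precisely the tool you need: Lemma~\ref{lemma:infiltration} and Corollary~\ref{cor:infiltration} (the infiltration lemma). Those results say that if $|\E(j)\cap B_r(x)|\le\s_0 r^n$ then $\E(j)\cap B_{r/2}(x)=\emptyset$, and the corollary handles several small chambers simultaneously. Combined with the blow-up hypothesis \eqref{regular point}, which guarantees the density of each $\E(j)$, $j\ne h,k$, drops below $\s_0$ at small scales, this gives \eqref{plan:empty} without any new truncation argument. There is nothing further to ``control'' in the far field; the De Giorgi--type iteration inside Lemma~\ref{lemma:infiltration} already absorbs the long-range tail of the kernel.

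Second, the identity
\[
P_s(\F;B_\rho(y))-P_s(\E;B_\rho(y))=P_s(F;B_\rho(y))-P_s(E;B_\rho(y))
\]
is \emph{false}, and the justification you give for it (``the additional nonlocal interaction terms involve only the chambers $\E(j)$, $j\ne h,k$, which coincide in $\E$ and $\F$'') misses the point. Setting $R=\bigcup_{j\ne h,k}\E(j)$ and $\Om=B_\rho(y)$, a direct computation with the relative perimeter gives
\[
P_s(\F;\Om)-P_s(\E;\Om)=P_s(F;\Om)-P_s(E;\Om)+I_s(E\cap\Om,R)-I_s(F\cap\Om,R)\,,
\]
and the last two terms do \emph{not} cancel, because $E$ and $F$ differ inside $\Om$ and both interact with $R$ across the kernel. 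This is exactly the long-range effect you worried about in the clearing-out step, and it reappears here. It is not fatal: since $R$ is at positive distance $\gtrsim r_0$ from $\Om$,
\[
\bigl|I_s(E\cap\Om,R)-I_s(F\cap\Om,R)\bigr|\le I_s(E\Delta F,R)\le \frac{C(n,s)}{r_0^{s}}\,|E\Delta F|\,,
\]
so the single-chamber almost-minimality does hold, but with a strictly larger constant $\Lambda'=2\Lambda+C(n,s)r_0^{-s}$ rather than $2\Lambda$. This is the computation the paper carries out carefully (see \eqref{eqn:lamda-min-single}--\eqref{eqn:lamda-min-single3}); your claim of exact equality needs to be replaced by this estimate.

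Once these two points are repaired, the rest of your argument (invocation of \cite{caputoguillen} and relative openness of ${\rm Reg}(\E)$) goes through and coincides with the paper's.
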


The next {\it infiltration lemma} (compare with \cite[Lemma 30.2]{maggiBOOK}) is a key step in proving Theorem \ref{thm reg part 1}.

\begin{lemma}\label{lemma:infiltration}
If $\E$ is a $(\Lambda, r_0)$-minimizing $N$-cluster in $\R^n$, then there exist positive constants $\s_0= \s_0(n,s, N)>0$, and $r_1 \leq r_0$ (depending on $n,s, \Lambda, r_0$) such that, if $x\in \R^n$, $r<r_1$, $h=0,...,N$ and
 $$|\E(h) \cap B_r(x)| \leq \s_0 r^n,$$
 then
 $$|\E(h) \cap B_{r/2}(x)| =0.$$
\end{lemma}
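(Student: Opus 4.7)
The plan is to prove this density-type estimate by contradiction, constructing for each radius $\rho$ a competitor cluster that transfers the (allegedly small) chamber mass $A_\rho := \E(h) \cap B_\rho(x_0)$ to one of the neighboring chambers, and then running a Volterra-type ODE argument for fractional almost-minimizing sets in the spirit of \cite{caffaroquesavin, caputoguillen}. Fix $x_0 \in \R^n$, $r < r_1$ (with $r_1 \leq r_0$ to be chosen small depending on $n, s, \Lambda$), and $h \in \{0, \dots, N\}$; set $m(\rho) := |A_\rho|$, assume $m(r) \leq \sigma_0\, r^n$, and suppose for contradiction that $m(r/2) > 0$, so that $m(\rho) > 0$ for every $\rho \in [r/2, r]$.

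For a.e.\ $\rho \in (r/2, r)$ and every $k \in \{0, \dots, N\} \setminus \{h\}$, I would compare $\E$ with the competitor cluster $\F = \F_{\rho,k}$ defined by
\[
\F(h) = \E(h) \setminus B_\rho(x_0), \qquad \F(k) = \E(k) \cup A_\rho, \qquad \F(j) = \E(j) \text{ for } j \neq h, k.
\]
A direct pairwise bookkeeping of the interactions $I_s(\cdot, \cdot)$ yields the identity
\[
P_s(\E) - P_s(\F) = I_s(A_\rho, \E(k)) - I_s(A_\rho, \E(h) \setminus B_\rho(x_0)),
\]
and since $\E \Delta \F \cc B_\rho(x_0) \cc B_{r_0}$ with $d(\E, \F) = 2\,m(\rho)$, the $(\Lambda, r_0)$-minimality of $\E$ gives, for every admissible $k$,
\[
I_s(A_\rho, \E(k)) \leq I_s(A_\rho, \E(h) \setminus B_\rho(x_0)) + \frac{2 \Lambda\, m(\rho)}{1 - s}.
\]
Summing over $k \neq h$, using $\sum_{k \neq h} I_s(A_\rho, \E(k)) = I_s(A_\rho, \E(h)^c) = P_s(A_\rho) - I_s(A_\rho, \E(h) \setminus B_\rho(x_0))$, and bounding $\E(h) \setminus B_\rho(x_0) \subset B_\rho(x_0)^c$, one arrives at the set-level almost-minimality
\[
P_s(A_\rho) \leq (N + 1)\, I_s(A_\rho, B_\rho(x_0)^c) + \frac{2N \Lambda\, m(\rho)}{1 - s}.
\]

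From here the proof proceeds by a Volterra-type ODE argument. Combining the inequality above with the fractional isoperimetric inequality $P_s(A_\rho) \geq c(n, s)\,m(\rho)^{(n-s)/n}$ and with the coarea-type bound
\[
I_s(A_\rho, B_\rho(x_0)^c) \leq \frac{P(B)}{s} \int_0^\rho \frac{m'(t)}{(\rho - t)^s}\,dt
\]
(itself a consequence of the estimate $\int_{B_\rho(x_0)^c} |x - y|^{-n-s}\,dy \leq \frac{P(B)}{s}(\rho - |x - x_0|)^{-s}$ valid for $x \in B_\rho(x_0)$, followed by the coarea formula for $m$), and absorbing the $\Lambda$-term into the leading order by choosing $r_1$ so small that $(1-s)^{-1}\Lambda\, r_1^s$ is negligible compared with $m(\rho)^{-s/n}$ for $m(\rho) \leq \sigma_0 r_1^n$, I obtain
\[
m(\rho)^{(n-s)/n} \leq C(n, s, N)\, \int_0^\rho \frac{m'(t)}{(\rho - t)^s}\,dt \qquad \text{for a.e.\ } \rho \in (r/2, r).
\]
A scaling analysis of this Volterra inequality --- the natural ansatz $m(\rho) = A\,\rho^n$ balances both sides and forces an explicit lower bound on $A$ depending only on $n, s, N$ via the beta function --- yields $m(r) \geq c(n, s, N)\, r^n$, contradicting $m(r) \leq \sigma_0\,r^n$ provided $\sigma_0 < c(n, s, N)$. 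The main obstacle is precisely this final Volterra step: contrary to the local case, the ``boundary contribution'' to $P_s(A_\rho)$ does not localize to $\H^{n-1}(\E(h) \cap \pa B_\rho(x_0))$ via coarea, and must instead be controlled through the singular kernel $(\rho - t)^{-s}$, in the same spirit as the ODE analysis carried out at the end of the proof of Lemma \ref{lemma truncation}.
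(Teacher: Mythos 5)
Your proposal follows essentially the same route as the paper's proof: the competitor $\F_{\rho,k}$ you write down is exactly the paper's $\F_{r,i}$, the interaction-energy identity
\[
P_s(\E)-P_s(\F_{\rho,k}) = I_s(A_\rho,\E(k)) - I_s(A_\rho,\E(h)\cap B_\rho^c)
\]
is correct and matches the algebra in the paper's (3.17)--(3.22) (the paper averages over $k\ne h$ where you sum; same thing), the isoperimetric lower bound on $P_s(A_\rho)$, the coarea upper bound on the interaction with the exterior of the ball, and the absorption of the $\Lambda$--term by choosing $r_1$ small are all as in the paper. The resulting Volterra inequality
\[
m(\rho)^{(n-s)/n}\le C(n,s,N)\int_0^\rho\frac{m'(t)}{(\rho-t)^s}\,dt
\]
is precisely (3.27).

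The one genuine gap is in the last step. The ``scaling analysis'' you offer --- plugging in the ansatz $m(\rho)=A\rho^n$ and reading off a lower bound on $A$ via a beta-function identity --- only shows that power-law solutions of the integral inequality obey the density bound; it does not prove that an \emph{arbitrary} nonnegative, nondecreasing $m$ satisfying the inequality and positive at $r/2$ must satisfy $m(r)\ge c\,r^n$. This is the part that genuinely requires an iteration argument, and the paper supplies it by citing a De Giorgi--type iteration lemma (\cite[Lemma 3.2 and Proof of Lemma 3.1]{F2M3}); the structure of that argument is different from, and not implied by, the truncation lemma's ODE analysis you allude to (the truncation lemma drives $m$ down to zero, whereas here one must propagate positivity up to a quantitative lower bound). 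You correctly flag this as ``the main obstacle,'' but the obstacle you describe --- that the boundary term does not localize to $\H^{n-1}(\E(h)\cap\pa B_\rho)$ --- is actually already taken care of by the coarea bound you wrote; the real remaining work is the iteration. So the proposal is correct in its reductions but incomplete in the final ODE step, which should be replaced by (or at least explicitly reduced to) the rigorous iteration lemma.
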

\begin{proof} We directly assume that $x=0$ and define an increasing function $u: (0,\infty) \to (0,\infty)$ by
\[
u(r) = |B_r \cap \E(h)|\qquad r>0\,,
\]
sot that $u'(r) = \H^{n-1}(\partial B_r \cap \E(h))$ for a.e. $r>0$. For every $r>0$, $i=0,...,N$, $i\neq h$, we consider the cluster obtained by giving part of the $h$-th chamber, namely $B_r \cap \E(h)$, to the $i$-th chamber
\begin{equation*}
\F_{r,i}(j) =
\begin{cases}
\E(h) \setminus B_r\qquad& \mbox{if }j=h\\
\E(i) \cup \big(\E(h)\cap B_r\big) \qquad& \mbox{if }j=i\\
\E(j)\qquad& \mbox{if }j\in \{0,...,N\} \setminus\{i,h\}.
\end{cases}
\end{equation*}
Since $\E$ is $(\Lambda,r_0)$-minimizing in $\R^n$ and since each $\F_{r,i}$ is an admissible competitor in \eqref{defn:alm-min}, we find that for every $r\leq r_1$, $i=0,...,N$, $i\neq h$,
\begin{equation}
\label{eqn:almostmin-appl}
\begin{split}
\frac{\Lambda}{1-s} u(r) &\geq P_s(\E)- P_s(\F_{r,i}) =  P_s(\E(i)) + P_s(\E(h))
- P_s(\F_{r,i}(i)) - P_s(\F_{r,i}(h)).
\end{split}
\end{equation}
To estimate the right-hand side in \eqref{eqn:almostmin-appl} we compute
\begin{equation}
\label{eqn:diet1}
\begin{split}
&P_s(\F_{r,i}(i))- P_s(\E(i)) = I_s\big(\E(i) \cup \big(\E(h)\cap B_r\big), \E(i)^c \cap \big(\E(h)\cap B_r\big)^c \big)- I_s(\E(i), \E(i)^c)
\\&=I_s\big( \E(h)\cap B_r, \E(i)^c \cap \big(\E(h)\cap B_r\big)^c \big)+ I_s\big(\E(i) , \E(i)^c \cap \big(\E(h)\cap B_r\big)^c \big) - I_s(\E(i), \E(i)^c)
\\&= I_s\big(\E(h)\cap B_r, \E(i)^c \cap \big(\E(h)\cap B_r\big)^c \big)- I_s\big(\E(i), \E(h)\cap B_r \big)
\end{split}
\end{equation}
and
\begin{equation}
\label{eqn:diet2}
\begin{split}
&P_s(\F_{r,i}(h))- P_s(\E(h)) = I_s\big(\E(h)\cap B_r^c, \E(h)^c \cup B_r \big)- I_s(\E(h), \E(h)^c)
\\&= I_s\big(\E(h)\cap B_r^c, \E(h)^c \cup B_r \big)- I_s(\E(h)\cap B_r^c, \E(h)^c) - I_s(\E(h)\cap B_r, \E(h)^c)
\\&= I_s\big(\E(h)\cap B_r^c, \E(h) \cap B_r \big) - I_s(\E(h)\cap B_r, \E(h)^c)
\end{split}
\end{equation}
We notice that
\begin{equation}
\label{eqn:simm-diff-is}I_s(A,B) - I_s(A,C) = I_s(A,B\setminus C)- I_s(A,C\setminus B)
\end{equation}
for every triple of measurable sets $A,B,C \subseteq \R^d$. Hence the difference between the first term in the right-hand side of \eqref{eqn:diet1} and the second term in the right-hand side of \eqref{eqn:diet2} equals
$$ I_s\big(\E(h)\cap B_r, \E(i)^c \cap \big(\E(h)\cap B_r\big)^c \big)- I_s(\E(h)\cap B_r, \E(h)^c) =  I_s\big(\E(h)\cap B_r, \E(h) \cap B_r^c \big)- I_s\big(\E(i), \E(h)\cap B_r \big).
$$
We add the previous equations  \eqref{eqn:diet1} and  \eqref{eqn:diet2}, plugging them into \eqref{eqn:almostmin-appl}, and then we apply the last equality to find that, for every $r>0$, $i=0,...,N$, $i\neq h$,
\begin{equation*}
\begin{split}
\frac{\Lambda}{1-s} u(r) &\geq  I_s\big(\E(h)\cap B_r, \E(i)^c \cap \big(\E(h)\cap B_r\big)^c \big)- I_s\big(\E(i), \E(h)\cap B_r \big)
\\& \hspace{1em}+ I_s\big(\E(h)\cap B_r^c, \E(h) \cap B_r \big) - I_s(\E(h)\cap B_r, \E(h)^c)
\\&=2\Big(I_s(\E(h) \cap B_r, \E(i)) - I_s(\E(h) \cap B_r, \E(h) \cap B_r^c)\Big).
\end{split}
\end{equation*}
Averaging over $i \neq h$ we obtain that
\begin{equation}\label{eqn:almostmin-appl-new}
\begin{split}
\frac{\Lambda}{1-s} u(r) &\geq \frac{2}{N} \sum_{i\neq h}I_s(\E(h) \cap B_r, \E(i)) - 2I_s(\E(h) \cap B_r, \E(h) \cap B_r^c)
\\&= \frac 2 N I_s(\E(h) \cap B_r, \E(h)^c \cup B_r^c) - 2\Big(1+\frac 1 N \Big)I_s(\E(h) \cap B_r, \E(h) \cap B_r^c)
\end{split}
\end{equation}
where the last equality follows from the fact that
$$
\E(h)^c \cup B_r^c= \big(\E(h) \cap B_r^c \big)\cup \bigcup_{i\neq h} \E(i).
$$
 By the isoperimetric inequality, we have that
$$I_s(\E(h) \cap B_r, \E(h)^c \cup B_r^c) = P_s(\E(h) \cap B_r) \geq \frac{P_s(B)}{|B|^{(n-s)/n}} u(r)^{(n-s)/n}
$$
By the coarea formula and the fact that $u'(t) = \H^{n-1}(E \cap \partial B_t)$, we find
\begin{equation*}
\begin{split}
 I_s(\E(h) \cap B_r, \E(h) \cap B_r^c)
&\leq \int_{\E(h) \cap B_r} dx \int_{B(x,r-|x|)^c} \frac{dy}{|x-y|^{n+s}}
\\
&= \frac{P(B)}{s}  \int_{\E(h) \cap B_r} \frac{dx}{(r-|x|)^s} = \frac{P(B)}{s}  \int_{0}^r \frac{u'(t)}{(r-t)^s} \, dt .
\end{split}
\end{equation*}
Hence, from \eqref{eqn:almostmin-appl-new} we deduce that
\begin{equation}
\label{eqn:integral-ODE}
\frac{2P_s(B)}{N|B|^{(n-s)/n}} u(r)^{(n-s)/n} \leq 2\Big(1+\frac 1 N \Big) \frac{P(B)}{s}  \int_{0}^r \frac{u'(t)}{(r-t)^s} \, dt
+\frac{\Lambda}{1-s} u(r).
\end{equation}
Setting
$$r_1 = \min \Big\{ r_0, \Big( \frac{(1-s) P_s(B)}{N\Lambda |B|} \Big)^{1/s} \Big\}, \qquad c_0= \Big( \frac{s}{4(N+1)|B| 2^{n/s}} \frac{(1-s) P_s(B)}{P(B)}\Big)^{n/s},$$
 we find that for every $r\leq r_1$
$$
\frac{\Lambda}{1-s} u(r) \leq \frac{\Lambda}{1-s} u(r)^{(n-s)/n}u(r)^{s/n}
\leq \frac{\Lambda}{1-s} u(r)^{(n-s)/n}|B|^{s/n} r_1^s
\leq \frac{P_s(B)}{N|B|^{(n-s)/n}} u(r)^{(n-s)/n}.
$$
Therefore, \eqref{eqn:integral-ODE} implies that
\begin{equation*}
 u(r)^{(n-s)/n} \leq 2(N+1) \frac{P(B)|B|^{(n-s)/n}}{sP_s(B)}  \int_{0}^r \frac{u'(t)}{(r-t)^s} \, dt.
\end{equation*}
By a De Giorgi-type iteration lemma (see \cite[Lemma 3.2 and Proof of Lemma 3.1]{F2M3}) this implies that $u(r) > c_0 |B| r^n$ for every $r\leq r_1$ and concludes the proof of the lemma.
\end{proof}

\begin{corollary}\label{cor:infiltration}
If $\E$ is a $(\Lambda, r_0)$-minimizing cluster in $\R^n$, then there exist positive constants $\s_0= \s_0(n,s, N)$ and $r_1 \leq r_0$ (depending on $n,s, \Lambda, r_0$) such that, if $x\in \R^n$, $r<r_1$, $S \subseteq \{0,...,N\}$ and
 $$ |\E(h) \cap B_r(x)| \leq \s_0 r^n \qquad \mbox{for every } h\in S,$$
 then
 $$|\E(h) \cap B_{2^{-N}r}(x)| =0\qquad \mbox{for every } h\in S.$$
\end{corollary}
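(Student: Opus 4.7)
The approach is to reduce the corollary to Lemma~\ref{lemma:infiltration} by invoking the latter once for each chamber $h \in S$. The key observation is that Lemma~\ref{lemma:infiltration} is already stated for a single, arbitrary chamber at a single scale: whenever the density bound $|\E(h) \cap B_r(x)| \leq \sigma_0 r^n$ holds, it outputs the strong zero-density conclusion $|\E(h) \cap B_{r/2}(x)| = 0$. Since the hypothesis of the corollary supplies this smallness uniformly over $h \in S$ at the \emph{same} scale $r$, we can feed each chamber into the lemma separately without any coupling: the constants $\sigma_0 = \sigma_0(n,s,N)$ and $r_1$ produced by the lemma serve equally well for the corollary.

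Concretely, the plan is the following: (i) take the same $\sigma_0$ and $r_1$ as in Lemma~\ref{lemma:infiltration}; (ii) fix $x \in \R^n$, $r < r_1$, $S \subseteq \{0,\dots,N\}$ and suppose $|\E(h) \cap B_r(x)| \leq \sigma_0 r^n$ for every $h \in S$; (iii) for each $h \in S$ individually, apply Lemma~\ref{lemma:infiltration} at $x$ with scale $r$ and chamber $h$ to obtain $|\E(h) \cap B_{r/2}(x)| = 0$; (iv) since $2^{-N} \leq 1/2$ for $N \geq 1$, the inclusion $B_{2^{-N}r}(x) \subseteq B_{r/2}(x)$ yields the desired conclusion $|\E(h) \cap B_{2^{-N}r}(x)| = 0$ for every $h \in S$.

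As for the main obstacle, there is none of real substance. All of the delicate analysis---the careful splitting of the interaction energies using the identity \eqref{eqn:simm-diff-is}, the derivation of the integral inequality on $u(r) = |\E(h) \cap B_r(x)|$, and the De Giorgi-type iteration borrowed from \cite{F2M3}---has already been executed in the proof of Lemma~\ref{lemma:infiltration}. The corollary is essentially a convenient repackaging of that lemma: the weaker radius $2^{-N}r$ (in place of the optimal $r/2$ obtainable from the one-step argument) is chosen to match the scale at which multi-chamber vanishing statements are typically used downstream, e.g.\ in the blow-up analysis of Section~\ref{section lambdar0 mininimizing} where one wishes to conclude simultaneously that all chambers of low density disappear from a single small ball.
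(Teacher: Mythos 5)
Your proof is correct, and it reaches the conclusion by a more direct route than the paper does. The paper's own argument takes the new $\sigma_0$ to be the constant from Lemma~\ref{lemma:infiltration} divided by $2^{nN}$ and then applies that lemma iteratively, eliminating one chamber of $S$ at each scale $2^{-k}r$; the extra factor $2^{-nN}$ is there so that the density hypothesis $|\E(h)\cap B_{2^{-k}r}(x)|\le \sigma_0^{\rm old}(2^{-k}r)^n$ still holds after each shrinking of the ball. Your observation is that no such bookkeeping is needed: both the hypothesis and the conclusion of Lemma~\ref{lemma:infiltration} concern only the single chamber $\E(h)$ and are insensitive to the contents of the remaining chambers --- its proof builds competitors $\F_{r,i}$ that move $\E(h)\cap B_r$ into another chamber, and after averaging over $i\ne h$ the resulting integral inequality \eqref{eqn:integral-ODE} involves only the function $u(r)=|\E(h)\cap B_r(x)|$ --- so the applications for distinct $h\in S$ decouple completely. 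Running the lemma once for each $h\in S$ at the same scale $r$, with the unchanged $\sigma_0$ and $r_1$, yields $|\E(h)\cap B_{r/2}(x)|=0$ for every $h\in S$, which (since $2^{-N}\le 1/2$) implies the stated conclusion and is in fact slightly stronger. Both arguments are valid; yours is the more economical one and does not degrade the constant $\sigma_0$, while the paper's iteration buys nothing here and appears to be an over-cautious formulation.
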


\begin{proof}
Take the new $\s_0$ to be the one given by the previous lemma divided by $2^{nN}$. Then we can apply the Lemma~\ref{lemma:infiltration}
 iteratively to deduce that $k$ chambers in $S$ are not present in $B_{2^{-k}r}(x)$.
\end{proof}

\begin{corollary}
  \label{lemma perimeter volume estimate}
  If $\E$ is a $(\Lambda, r_0)$-minimizing cluster in $\R^n$, then there exist positive constants $r_1$ and $C_0$ (depending on $n$, $s$, $\Lambda$ and $r_0$) and $c_0, c_1\in (0,1)$ (depending on $n$ only), such that for every $r<r_1$, $x\in\R^n$ and $h=0,...,N$ one has
  \begin{eqnarray}\label{upper perimeter estimate}
  \sum_{h=0}^N P_s( \E(h) \cap B_r(x)) \leq C_0 r^{n-s}\,,
  \\
  \label{eqn:n-dim-density}
  c_0 \omega_n r^n \leq |\E(h) \cap B_r(x)| \leq c_1 \omega_n r^n\,.
  \end{eqnarray}
\end{corollary}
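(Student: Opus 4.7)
\medskip

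\noindent\emph{Proof proposal.} The plan is to deduce the three estimates from the almost-minimality condition \eqref{defn:alm-min} combined with the infiltration result Lemma~\ref{lemma:infiltration} and its Corollary~\ref{cor:infiltration}.

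For the upper perimeter estimate \eqref{upper perimeter estimate}, I would test almost-minimality against the ``swept'' competitor $\F$ defined by $\F(0)=\E(0)\cup B_r(x)$ and $\F(h)=\E(h)\setminus B_r(x)$ for $h=1,\dots,N$. This competitor satisfies $\E\Delta\F\subset B_r(x)$ and $\d(\E,\F)\le 2\omega_n r^n$, so (up to a harmless enlargement of the ball in \eqref{defn:alm-min})
\[
P_s(\E;B_r(x))\le P_s(\F;B_r(x))+\frac{2\,\Lambda\,\omega_n\,r^n}{1-s}\,.
\]
Since inside $B_r(x)$ the cluster $\F$ has only the exterior chamber, each $P_s(\F(h);B_r(x))$ is dominated by the single interaction $I_s(B_r(x),B_r(x)^c)=P_s(B_r(x))$, so $P_s(\F;B_r(x))\le C(N)\,P_s(B_r(x))\le C(n,s,N)\,r^{n-s}$; choosing $r_1$ small enough absorbs the $r^n$ volumetric error into the leading $r^{n-s}$ term. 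To pass from the relative cluster perimeter to the sum of perimeters of single chambers, I would use the localized version of \eqref{eqn:per-e-palla},
\[
P_s(\E(h)\cap B_r(x))\le P_s(\E(h);B_r(x))+P_s(B_r(x))\,,
\]
and sum over $h=0,\dots,N$ to conclude \eqref{upper perimeter estimate}.

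The density estimates \eqref{eqn:n-dim-density} are to be understood at a boundary point $x\in\pa\E(h)$ (otherwise the lower bound is trivially false), and both follow from the infiltration statements by contradiction. If $|\E(h)\cap B_r(x)|<\s_0 r^n$ with $\s_0$ the constant of Lemma~\ref{lemma:infiltration}, the lemma forces $|\E(h)\cap B_{r/2}(x)|=0$, contradicting $x\in\pa\E(h)$; this yields the lower bound with $c_0=\s_0/\omega_n$. Conversely, if $|\E(h)\cap B_r(x)|>(\omega_n-\tilde\s_0)\,r^n$ with $\tilde\s_0$ the constant of Corollary~\ref{cor:infiltration}, then $|\E(k)\cap B_r(x)|\le \tilde\s_0\, r^n$ for every $k\neq h$, and the corollary applied with $S=\{0,\dots,N\}\setminus\{h\}$ gives $|\E(k)\cap B_{r/2^N}(x)|=0$ for all $k\neq h$; hence $B_{r/2^N}(x)\subset\E(h)$ up to a null set, again contradicting $x\in\pa\E(h)$, and we may take $c_1=1-\tilde\s_0/\omega_n<1$.

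The main technical point is the perimeter bookkeeping: nonlocality prevents us from identifying $\sum_h P_s(\E(h)\cap B_r(x))$ with the relative cluster perimeter $P_s(\E;B_r(x))$, which is the quantity natively controlled by almost-minimality. The discrepancy is paid by an extra $P_s(B_r(x))\sim r^{n-s}$ contribution which, fortunately, matches the desired scaling and does not degrade the final constant.
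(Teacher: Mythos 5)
Your proposal is correct and takes essentially the same route as the paper: you test the same competitor (sweep $B_r(x)$ into the exterior chamber), absorb the $\Lambda r^n$ volumetric error using $r<r_0$, and get both density bounds from the infiltration Lemma/Corollary exactly as the paper intends. The only difference is bookkeeping for \eqref{upper perimeter estimate}: the paper works with global perimeters and the two explicit inequalities $P_s(E\cap F)+P_s(E\setminus F)\le P_s(E)+2P_s(F)$ and $P_s(E\cap F)+P_s(E\cup F)\le P_s(E)+P_s(F)$, while you pass through the relative perimeter $P_s(\,\cdot\,;B_r)$ and then use $P_s(E\cap B)\le P_s(E;B)+P_s(B)$; both pay the same $O(r^{n-s})$ toll and are interchangeable. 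Your observation that the density bounds \eqref{eqn:n-dim-density} only make sense at $x\in\pa\E(h)$ is a fair reading of the statement as it is actually used later.
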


\begin{proof}
Clearly \eqref{eqn:n-dim-density} follows from Lemma \ref{lemma:infiltration}, so we focus on \eqref{upper perimeter estimate}. Comparing $\E$ to the cluster which is obtained from $\E$ by giving $B_r(x)$ to the exterior chamber in the $(\Lambda,r_0)$-minimality, we have that
$$
P_s(\E(0)) -  P_s(\E(0) \cup B_r(x)) +\sum_{h=1}^N \big(P_s(\E(h)) -  P_s(\E(h) \setminus B_r(x)) \big) \le\frac{\Lambda}{1-s}\,\d(\E,\F)\,,
$$
Since for every measurable sets $E,F$ we have that
\begin{equation}
\label{eqn:bell1}
\begin{split}
P_s(E\cap F) + P_s(E\setminus F) 
& \leq I_s(E \cap F , E^c ) + I_s(E \cap F , F^c) + I_s(E \cap F^c, E^c )+ I_s(E \cap F^c, F)
\\&\leq P_s(E) +2 P_s( F)
\end{split}
\end{equation}
and similarly
\begin{equation}
\label{eqn:bell2}
P_s(E\cap F) + P_s(E \cup F) \leq P_s(E) + P_s( F),
\end{equation}
applying \eqref{eqn:bell1} to each chamber $E= \E(h)$ with $F = B_r(x)$ and applying \eqref{eqn:bell2} to $E= \E(0)$ with $F = B_r(x)$ we deduce that
$$
\sum_{h=0}^N P_s( \E(h) \cap B_r(x))\le (2N+1) P_s(B_r(x)) +\frac{\Lambda}{1-s}\,\d(\E,\F)\,,
$$
Since by scaling $P_s(B_r) = P_s(B_1) r^{n-s}$ and $\d(\E,\F) \leq \omega_n r^n \le \omega_n r_0^{s} r^{n-s}$, we have proved \eqref{upper perimeter estimate}.
\end{proof}

\begin{proof}
  [Proof of Theorem \ref{thm reg part 1}] {\it Step one}: We show that if $x\in{\rm Reg}(\E)$, then there exist $h=0,...,N$ and $s_x>0$ such that $x\in\pa\E(h)$ and $\E(h)$ is $(\Lambda',s_x)$-minimizing in $B_{s_x}(x)$. Indeed, by definition of ${\rm Reg}(\E)$, if $x\in{\rm Reg}(\E)$, then
  \[
  \lim_{r\to 0^+}\frac{|\E(h)\cap B_r(x)|}{|B_r(x)|}+\frac{|\E(k)\cap B_r(x)|}{|B_r(x)|}=1
  \]
  for some $h,k\in\{0,...,N\}$, $h\ne k$. Thus, by Corollary \ref{cor:infiltration}, there exists $r_x>0$ such that $\E(j)\cap B_{r_x}(x)=\emptyset$ if $j\ne h,k$. We now claim that if $s_x=\min\{r_x,r_0\}/2$, then there exists $\Lambda'\ge\Lambda$ such that
  \begin{equation}
    \label{atash}
      P_s(\E(h);B_{s_x}(x))\le   P_s(F;B_{s_x}(x))+\frac{\Lambda'}{1-s}\,|F\Delta\E(h)|
  \end{equation}
  whenever $F\Delta \E(h)\cc B_{s_x}(x)$. Indeed, given such a set $F$, let $\F$ be the cluster defined by
  \begin{eqnarray}
   \F(h) = F\,,\qquad \F(k) = (\E(k) \cap B_{s_x}(x)^c)  \cup( F^c \cap B_{s_x}(x))\,,\qquad\F(j) = \E(j)\quad\forall j\ne h,k\,.
  \end{eqnarray}
  Since $\E\Delta\F\cc B_{s_x}(x)$ and $s_x<r_0$ we have $P_s(\E)\le P_s(\F)+(1-s)^{-1}\Lambda\,\d(\E,\F)$, which in turn gives
  \[
  P_s(\E(h))+ P_s(\E(k))\le P_s(\F(h))+ P_s(\F(k))+\frac{2\Lambda}{1-s}\, \Big( |\E(h) \Delta \F(h) | + |\E(k) \Delta \F(k) | \Big)\,.
  \]
  We want to rewrite this condition in terms of $\E(h)$ and $\F(h)$ only: to this end, we set $R = \E(0) \cup \E(3) \cup ... \cup \E(N) $, and since $\E(h) = ( \F(h) \cup R)^c$ we thus find
\begin{equation}
\label{eqn:lamda-min-single}
0 \leq  P_s(\F(h))+ P_s(\F(h) \cup R) - P_s(\E(h))- P_s(\E(h) \cup R)+\frac{4 \Lambda}{1-s}\,  |\E(h) \Delta \F(h) |.
\end{equation}
We have that
\begin{equation*}
P_s(\F(h) \cup R) = 
 I_s(\F(h), \F(h)^c \cap R^c) + I_s( R, \F(h)^c  \cap R^c) = P_s(\F(h)) - 2 I_s(\F(h), R) + P_s(R)
\end{equation*}
and similarly
\begin{equation*}
P_s(\E(h) \cup R) = 
 I_s(\E(h), \E(h)^c \cap R^c) + I_s( R, \E(h)^c  \cap R^c) = P_s(\E(h)) - 2 I_s(\E(h), R) + P_s(R).
\end{equation*}
Plugging the last two equations in  \eqref{eqn:lamda-min-single} and dividing by $2$ we obtain
$$0 \leq   P_s(\F(h)) - P_s(\E(h)) + I_s(\E(h), R) -  I_s(\F(h), R)+\frac{2 \Lambda}{1-s}\,  |\E(h) \Delta \F(h) |.$$
Moreover, since $R$ and $\F(h) \setminus \E(h)$ are at distance $r_0/2$, by \eqref{eqn:simm-diff-is}
\begin{equation}\label{eqn:lamda-min-single3}
\begin{split}
I_s(\E(h), R) - I_s(\F(h), R) &= I_s(\E(h) \setminus \F(h), R) - I_s(\F(h) \setminus \E(h) , R) \leq  I_s(\E(h) \setminus \F(h), R) \\
&\leq \int_{\E(h) \setminus \F(h)} \int_{|x-y|>r_0/2} \frac{dx}{|x-y|^{n+s}}  \, dy = P(B_1)|\E(h) \setminus \F(h) |\int_{r_0/2}^\infty \frac{dr}{r^{1+s}}
\\ &\leq \frac{2^sP(B_1)}{s r_0^s} |\E(h) \setminus \F(h) |.
\end{split}
\end{equation}
Hence we are left with
$$0 \leq   P_s(\F(h)) - P_s(\E(h)) +\frac{2 \Lambda}{1-s}\,  |\E(h) \Delta \F(h) | + \frac{2^sP(B_1)}{s r_0^s} |\E(h) \setminus \F(h) | ,
$$
which in turn proves the $(\Lambda',s_x)$-minimality of $\E(h)$  in $B_{s_x}(x)$ with $\Lambda' =2 \Lambda +(1-s) P(B_1)/ s  r_0^s$.

\medskip

\noindent {\it Step two}: Let $x\in{\rm Reg}(E)$ and let $s_x$ and $h$ as in step one. By \eqref{regular point} there exists an half-space $H$ such that $\E(h)^{x,r}\to H$ as $r\to 0^+$. By \eqref{eqn:n-dim-density}, given $\de>0$ and up to further decreasing the value of $s_x$ depending on $\de$, we may entail that
\[
B_{s_x}(x)\cap\pa\E(h)\subset \big\{y\in\R^n:\dist(y,\pa H)<\de\big\}\,.
\]
By the main result in \cite{caputoguillen} (see \cite{caffaroquesavin} for the case $\Lambda'=0$), if we take a suitable value of $\de$ (depending on $n$, $s$ and $\Lambda'$), then \eqref{atash} implies that $B_{s_x/2}(x)\cap\E(h)$ is contained in the epigraph of a $C^{1,\a}$ function defined of $(n-1)$-variables. This implies that $B_{s_x/2}(x)\cap\pa\E(h)\subset{\rm Reg}(\E)$, that $B_{s_x/2}(x)\cap\pa\E(h)=B_{s_x/2}(x)\cap\pa\E$, and that $B_{s_x/2}(x)\cap\pa\E(h)$ is a $C^{1,\a}$-hypersurface. The theorem is proved.
\end{proof}

\subsection{Blow-ups and monotonicity formula}\label{section extension} We now come to the problem of addressing the size of the singular set $\S(\E)=\pa\E\setminus{\rm Reg}(\E)$, consisting of those $x\in\pa\E$ such that $\E^{x,r}$ do not converge to a pair complementary half-spaces as $r\to 0^+$. The first step in this direction is showing that sequential blow-ups of minimizing clusters are conical (and still minimizing). In order to state the result, we introduce the following terminology.

A {\it conical $M$-cluster $\K$ in $\R^n$} is a $M$-cluster with the property that $r \K(i)  = \K(i)$ for every $r>0$ and $i=1,...,M$. We notice that, for conical clusters, being $(\Lambda,r_0)$-minimizing for some $\Lambda,r_0>0$ is equivalent to being $(0,\infty)$-minimizing. We thus simply speak of {\it minimizing conical clusters}. Finally, for any open set $A$ and for any pair of sets $E,F \subseteq \R^n$, we define the Hausdorff distance between $E$ and $F$ relative to $A$ as
  $$
  \hd_A(E,F) = \inf\{ \e>0: E \cap A \subseteq F_\e \mbox{ and } F \cap A \subseteq E_\e\},
  $$
  where $E_\e$ denotes the $\e$-enlargement of a set $E \subseteq \R^n$. We aim to prove the following theorem.

\begin{theorem}\label{thm cono tangente}
If $\E$ is a $(\Lambda,r_0)$-minimizing $N$-cluster in $\R^n$, $x\in \partial \E$, and $r_j \to 0^+$, then there exist a conical minimizing $M$-cluster $\K$ (with $M\leq N$) and an injective function $\sigma :\{ 0,..., M\} \to \{ 0,...,N \}$ such that, up to extracting subsequences
\begin{equation}
\label{eqn:blow-up-l1}\E_{x,r_j}(\sigma(i))\to \K(i) \qquad \mbox{in } L^1_{\loc}(\R^n)\qquad\forall i=0,...,M
\end{equation}
\begin{equation}
\label{eqn:hausd}
\hd_{B_R} (\partial\E_{x,r_j}(\sigma(h)) , \partial \K(h))\to 0  \qquad \forall R>0\,,i=0,...,M\,,
\end{equation}
as $j\to\infty$.
\end{theorem}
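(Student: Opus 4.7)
My approach combines compactness in $L^1_{\loc}$, vanishing of the $\Lambda$-term under scaling, the density estimates of Section~\ref{section lambdar0 mininimizing}, and the monotonicity formula obtained via the Caffarelli--Silvestre extension; the last ingredient is the main obstacle, while the rest is standard. Set $\E_j=\E_{x,r_j}$: scaling ($P_s(rE)=r^{n-s}P_s(E)$ and $|rE|=r^n|E|$) shows that $\E_j$ is $(\Lambda\,r_j^s,\,r_0/r_j)$-minimizing, so on any fixed $B_R$ these clusters are almost-minimizing with uniformly controlled parameters for $j$ large. Applying \eqref{upper perimeter estimate} gives $\sum_h P_s(\E_j(h);B_R)\le C(R)$, and compactness produces disjoint Borel sets $F_0,\dots,F_N$ with $\bigcup_h F_h=\R^n$ and $1_{\E_j(h)}\to 1_{F_h}$ in $L^1_{\loc}$. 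Let $J=\{h:|F_h|>0\}$, $M=\#J-1$, and pick any bijection $\sigma:\{0,\dots,M\}\to J$ with $\sigma(0)=0$ whenever $0\in J$; set $\K(i):=F_{\sigma(i)}$. Lemma~\ref{lemma:infiltration}, applied uniformly in $j$, forces the chambers with index outside $J$ to vanish identically on every fixed ball for large $j$, so $\K$ is a genuine $M$-cluster with $M\le N$ and $\E_j(\sigma(i))\to\K(i)$ in $L^1_{\loc}$ for every $i=0,\dots,M$.

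To show that $\K$ is $(0,\infty)$-minimizing, given a competitor $\F'$ with $\F'\Delta\K\cc B_R$ I would splice $\F'$ into $\E_j$ inside $B_{R+1}$ to form a cluster $\F_j$ with $\F_j\Delta\E_j\cc B_{R+1}$ and $d(\E_j,\F_j)$ bounded uniformly in $j$ (by $L^1_{\loc}$-convergence on the gluing shell, choosing its radius via coarea so that the mismatch is controlled). The almost-minimality of $\E_j$, the cancellation identity $P_s(\E_j)-P_s(\F_j)=P_s(\E_j;B_{R+1})-P_s(\F_j;B_{R+1})$ from Section~\ref{section notation}, the vanishing $\Lambda\,r_j^s\to 0$, and the lower semicontinuity of $P_s(\cdot;B_R)$ along $L^1_{\loc}$-convergence pass in the limit to $P_s(\K;B_R)\le P_s(\F';B_R)$, as required.

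The hardest remaining point is that $\K$ is conical. Following \cite{caffaroquesavin}, I would extend each indicator $1_{\K(h)}-1_{\K(h)^c}$ to $\R^{n+1}_+$ by solving the degenerate elliptic equation with weight $y^{1-s}$, and form a normalized local energy $\Phi_z(\rho)$; almost-minimality translates into the monotonicity $\rho\mapsto\Phi_z(\rho)$ being nondecreasing, with equality iff the extension is $0$-homogeneous about $(z,0)$. The scaling law of the extension gives $\Phi_{\K,0}(\rho)=\lim_{j}\Phi_{\E,x}(r_j\rho)$, which coincides with the (existing) limit $\lim_{r\to 0^+}\Phi_{\E,x}(r)$, so $\Phi_{\K,0}$ is constant in $\rho$; the equality case of the monotonicity formula then forces the extension, and hence each $\K(h)$, to be invariant under positive dilations. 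Finally, the Hausdorff convergence \eqref{eqn:hausd} is a standard corollary of \eqref{eqn:blow-up-l1} combined with the two-sided density estimate \eqref{eqn:n-dim-density} applied uniformly to $\E_j$ and to $\K$: any point of $\partial\E_j(\sigma(h))\cap B_R$ at distance $\delta$ from $\partial\K(h)$ would sit in a ball of radius $\delta/2$ contained either in $\K(h)$ or in its complement, contradicting the infiltration lemma for $\E_j$ via $L^1_{\loc}$-convergence, while the symmetric argument yields the reverse inclusion.
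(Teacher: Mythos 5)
Your proposal is correct and follows essentially the same route as the paper: uniform perimeter bounds from \eqref{upper perimeter estimate} give $L^1_{\rm loc}$-compactness, the density estimates give the Hausdorff convergence, a compactness/splicing argument gives $(0,\infty)$-minimality of the limit, and the extension-based monotonicity formula (Theorem~\ref{thm:monot}) together with the scaling identity $\Phi_{\E}(r_j\rho)=\Phi_{\E_{x,r_j}}(\rho)$ forces the limit to be conical. Two minor remarks: your assertion that chambers with index outside $J$ vanish identically on compact sets for large $j$ (via the infiltration lemma) is not needed, since the identity $\sum_h 1_{\E_j(h)}\equiv 1$ already passes to the limit and gives that the $F_h$ with $h\in J$ partition $\R^n$ up to null sets; and the compactness step uses the bound on $P_s(\E_j(h)\cap B_R)$ rather than on the relative perimeter $P_s(\E_j(h);B_R)$ that you wrote, though this does not affect the conclusion.
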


As usual, the key ingredient in proving Theorem \ref{thm cono tangente} is obtaining a monotonicity formula. Following \cite{caffaroquesavin}, this is obtained at the level of a degenerate Dirichlet energy associated to an extension problem, see Theorem \ref{thm:monot} below. The argument follow very closely \cite{caffaroquesavin}, so we limit ourselves to a quick review.

We start by introducing the extension problem and the Dirichlet form. Let $a=1-s$ and embed $\R^n$ into $\{z\ge0\}=\{X=(x,z)\in\R^{n+1}:z\ge 0\}$. We set
\[
U_R=\{X\in\R^{n+1}:|X|<R\}\,,\qquad A_+=A\cap\{z>0\},\qquad A_0=A\cap\{z=0\}\,,\qquad\forall A\subset\R^{n+1}\,.
\]
Given a measurable set $E$ we define $u_E: \{z\ge0\} \to \R$ by solving
\begin{equation*}
\begin{cases}
{\rm div}  (z^a \nabla u_E ) = 0 \qquad \mbox{in } \{z\ge0\}
\\
u_E = 1_E- 1_{E^c} \qquad \mbox{on } \{z=0\}\,.
\end{cases}
\end{equation*}
Notice that $u_E$ is obtained by convolution with the Poisson kernel,
$$
u_E(\cdot,z) = P(\cdot, z) \ast (1_E- 1_{E^c} ), \qquad P(x,z) = c(n,a) \frac{z^{1-a}}{( |x|^2+ z^2)^{\frac{n+1-a}{2}}}.
$$
If $E\subset\R^n$ is such that $P_s(E)<\infty$, then there exists a unique minimizer ${u}_E$ in
\[
\inf\Big\{\I_s(u)=\int_{\{z>0\}}z^a\,|\nabla u|^2:\tr(u)=1_E-1_{E^c}\Big\}\,,
\]
where $\tr$ denotes the trace operator from $\bigcap_{R>0}W^{1,1}(B_R\times(0,R))$ to $L^1_{loc}(\R^n)$, and one has
\[
\I_s({u}_E)=c\,P_s(E)\,,
\]
for some $c=c(n,s)$. The following lemma relates minimality for the nonlocal perimeter to minimality for the degenerate Dirichlet energy.

\begin{lemma}[Lemma 7.2 in \cite{caffaroquesavin}]\label{lemma:perim-ext} There exists a constant $c_0$ depending only on $n$ and $s$, and having the following property. If $E,F\subset\R^n$ are such that $P_s(E; B_1), P_s(F, B_1) <\infty$ and $E\Delta F\cc B_1$, then
\begin{equation}
\label{eqn:compet-extens}\inf_{\Omega, v} \int_{\Omega_+} z^a ( |\nabla v|^2 - |\nabla u_E|^2) \, dz
= c_{0} \big(P_s(F;B_1) - P_s(E;B_1) \big)
\end{equation}
where  $\Omega\subset\R^{n+1}$ is any bounded Lipschitz domain with $\Omega_0\subseteq B_1$ an $v \in W^{1,1}(\Omega)$ is such that ${\rm spt}(v- u_{E})\subset\Omega$ and $\tr v=1_{F} - 1_{F^c}$ on $\Om_0$.
\end{lemma}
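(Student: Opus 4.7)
My plan is to reduce the statement to the variational characterization of the Poisson extension $u_F$ as the $\I_s$-minimizer with trace $1_F - 1_{F^c}$, by globally extending each local competitor and comparing energies. Given an admissible pair $(\Omega, v)$, I may first assume $E \Delta F \subseteq \Omega_0$ (otherwise I enlarge $\Omega_0$ up to $B_1$, which only enlarges the admissible class). Set
\begin{equation*}
  \tilde v := v \, 1_{\Omega_+} + u_E \, 1_{\R^{n+1}_+ \setminus \Omega_+},
\end{equation*}
well-defined in $W^{1,1}_{{\rm loc}}$ since $v = u_E$ on $\pa \Omega \cap \{z>0\}$. Because $E = F$ on $\Omega_0^c$, the trace of $\tilde v$ equals $1_F - 1_{F^c}$ on all of $\R^n$, and $\tilde v - u_E$ has compact support in $\overline{\Omega}$; consequently
\begin{equation*}
  \int_{\Omega_+} z^a(|\nabla v|^2 - |\nabla u_E|^2)\,dX = \int_{\R^{n+1}_+} z^a(|\nabla \tilde v|^2 - |\nabla u_E|^2)\,dX,
\end{equation*}
and the right-hand side is a well-defined real number even when $\I_s(u_E)$ and $\I_s(u_F)$ are individually infinite.

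For the $\geq$ direction, I expand
\begin{equation*}
|\nabla \tilde v|^2 - |\nabla u_E|^2 = |\nabla(\tilde v - u_E)|^2 + 2\,\nabla u_E \cdot \nabla(\tilde v - u_E)
\end{equation*}
and integrate the cross term by parts. Since ${\rm div}(z^a \nabla u_E) = 0$ in $\{z > 0\}$ and $\tilde v - u_E$ has compact support in $\overline{\R^{n+1}_+}$, the cross term reduces to a boundary integral on $\R^n$ depending only on $\tr(\tilde v - u_E) = 2(1_F - 1_E)$ and on the distributional flux $\lim_{z\to 0^+} z^a \partial_z u_E$, which (up to a universal constant) is $(-\Delta)^s(1_E - 1_{E^c})$. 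The resulting functional of $\tr \tilde v$ is minimized at $\tilde v = u_F$ by the variational characterization of Poisson extensions, and its minimum value evaluates to $c_0(P_s(F;B_1) - P_s(E;B_1))$, using the identity $P_s(F) - P_s(E) = P_s(F;B_1) - P_s(E;B_1)$ valid since $E \cap B_1^c = F \cap B_1^c$.

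For the matching $\leq$ direction, I build a recovery sequence. For $R \gg 1$ take $\Omega = U_R$ and let $v_R$ minimize $\int_{(U_R)_+} z^a|\nabla v|^2$ among $v$ with $v = u_E$ on $\pa U_R \cap \{z>0\}$ and $\tr v = 1_F - 1_{F^c}$ on $(U_R)_0$; extending $v_R$ by $u_E$ outside $U_R$ produces $\tilde v_R$ with global trace $1_F - 1_{F^c}$. As $R \to \infty$, the admissible class for $\tilde v_R$ exhausts the set of all extensions with the prescribed trace for which $\tilde v_R - u_E$ is compactly supported, and by (Muckenhoupt-weighted) elliptic regularity together with the uniqueness of the Poisson extension, $\tilde v_R \to u_F$ locally in the weighted Sobolev sense. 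Passing to the limit in the energy identity closes the argument.

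The principal obstacle is the rigorous justification of the boundary integration by parts: the traces $1_E - 1_{E^c}$ lie in $L^\infty$ but not globally in $H^{s/2}$, so $\lim_{z\to 0^+} z^a \partial_z u_E$ must be interpreted distributionally against the compactly supported test trace $\tr(\tilde v - u_E) = 2(1_F - 1_E)$. A density argument approximating by smooth truncations, together with a careful identification of the constant $c_0$ via the Caffarelli--Silvestre Fourier representation of $(-\Delta)^s$, should handle these issues.
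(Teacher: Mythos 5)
The paper cites this as Lemma 7.2 of Caffarelli--Roquejoffre--Savin and does not reproduce a proof, so there is no internal argument to compare against. Your sketch follows the CRS route: glue each local competitor $v$ with $u_E$ to obtain a global $\tilde v$; write the energy gap as $\int z^a|\nabla w|^2 + 2\int z^a\nabla u_E\cdot\nabla w$ with $w = \tilde v - u_E$; use that $u_E$ is $z^a$-harmonic and $w$ is compactly supported to show the cross term is a fixed constant determined by $\tr w = 2(1_F - 1_E)$; and minimize the square term by the Poisson extension of $2(1_F - 1_E)$. That is the right architecture.

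Two concrete gaps remain beyond the density/trace issues you already flag. First, your recovery sequence chooses $\Omega = U_R$ with $R\gg 1$, which violates the admissibility constraint $\Omega_0\subseteq B_1$ as soon as $R>1$; you need competitor domains that stay thin near $\{z=0\}\setminus B_1$, together with a cutoff argument showing that the weighted Dirichlet energy of $u_F-u_E$ in a thin slab $\{|x|>1-\delta,\,0<z<\delta\}$ vanishes as $\delta\to0^+$ (plausible since $\tr(u_F-u_E)$ is supported in a compact subset of $B_1$, but it requires proof, not just invocation of uniqueness of the Poisson extension). Second, the closing evaluation leans on ``$P_s(F) - P_s(E) = P_s(F;B_1) - P_s(E;B_1)$'', which is $\infty-\infty$ when the global perimeters are infinite. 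To make the identification rigorous one should combine the square term $\langle(-\Delta)^{s/2}(g-f),g-f\rangle$ and cross term $2\langle(-\Delta)^{s/2}f,g-f\rangle$ (with $f = 1_E - 1_{E^c}$, $g = 1_F - 1_{F^c}$) into $\langle(-\Delta)^{s/2}(g+f),g-f\rangle$, rewrite this as a constant times $\int\!\!\int \big(1_F(x)1_{F^c}(y) - 1_E(x)1_{E^c}(y)\big)|x-y|^{-n-s}\,dx\,dy$, and observe that this integral is absolutely convergent and equals $P_s(F;B_1) - P_s(E;B_1)$ precisely because the integrand vanishes on $B_1^c\times B_1^c$ and the relative perimeters are finite. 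Finally, a small but worth-fixing point: with $a=1-s$ the Caffarelli--Silvestre flux realizes $(-\Delta)^{s/2}$ of the trace, not $(-\Delta)^s$ as written.
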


\begin{corollary}\label{prop cluster min}
A cluster $\E$ is $(0,\infty)$-minimizing in $B_1$ if and only if for every $N$-cluster $\F$ with $\E\Delta\F\cc B_1$ the extensions $u_{\E(h)}$ of $\E(h)$ satisfy
\begin{equation}
\label{eqn:robin}
\sum_{h=0}^N \int_{\Omega_+} z^a |\nabla  u_{\E(h)}|^2\, dz \leq
 \sum_{h=0}^N \int_{\Omega_+} z^a |\nabla v_h|^2 \, dz
\end{equation}
for all bounded Lipschitz domains $\Omega\subset\R^{n+1}$ with $\Omega_0 \subseteq B_1$ and all functions $v_{h}$ such that ${\rm spt}(v_h- u_{\E(h)})\cc \Omega$ and $\tr v_h=1_{\F(h)}-1_{\F(h)^c}$ on $\Om_0$.
\end{corollary}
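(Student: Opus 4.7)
The plan is to deduce both implications from Lemma~\ref{lemma:perim-ext} by applying it chamber-by-chamber and summing, together with the identity $\sum_{h=0}^N P_s(\E(h);B_1)=2P_s(\E;B_1)$ (and the analogue for $\F$).

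\emph{Forward direction.} Assume $\E$ is $(0,\infty)$-minimizing in $B_1$, fix $\F$ with $\E\Delta\F\cc B_1$, and let $\Omega$ and $\{v_h\}_{h=0}^N$ be as in \eqref{eqn:robin}. For every $h$ the pair $(\Omega,v_h)$ is admissible in the infimum appearing in Lemma~\ref{lemma:perim-ext} applied to $E=\E(h)$, $F=\F(h)$, since $\E(h)\Delta\F(h)\cc B_1$; hence
$$\int_{\Omega_+}z^a|\nabla v_h|^2\,dz-\int_{\Omega_+}z^a|\nabla u_{\E(h)}|^2\,dz\;\ge\;c_0\bigl(P_s(\F(h);B_1)-P_s(\E(h);B_1)\bigr).$$
Summing over $h$, the right-hand side equals $2c_0(P_s(\F;B_1)-P_s(\E;B_1))\ge 0$ by minimality, which is exactly \eqref{eqn:robin}.

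\emph{Reverse direction.} Assume \eqref{eqn:robin} and fix $\F$ with $\E\Delta\F\cc B_1$. Choose $\rho\in(0,1)$ with $\E(h)\Delta\F(h)\cc B_\rho$ for every $h$, and fix $\varepsilon>0$. For each $h$, Lemma~\ref{lemma:perim-ext} yields a bounded Lipschitz domain $\Omega^{(h)}$ with $(\Omega^{(h)})_0\subseteq B_1$ and a function $w_h$ with ${\rm spt}(w_h-u_{\E(h)})\cc\Omega^{(h)}$, $\tr w_h=1_{\F(h)}-1_{\F(h)^c}$ on $(\Omega^{(h)})_0$, and
$$\int_{\Omega^{(h)}_+}z^a\bigl(|\nabla w_h|^2-|\nabla u_{\E(h)}|^2\bigr)\,dz\;\le\;c_0\bigl(P_s(\F(h);B_1)-P_s(\E(h);B_1)\bigr)+\varepsilon;$$
since the prescribed traces already agree with $1_{\E(h)}-1_{\E(h)^c}$ outside $\overline{B_\rho}$, the $\Omega^{(h)}$ may be taken with $(\Omega^{(h)})_0\cc B_1$. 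Fix now a bounded Lipschitz domain $\Omega$ with $\Omega_0\subseteq B_1$ containing $\bigcup_h\overline{\Omega^{(h)}}$ (for instance a cylinder $B_{\rho'}\times(-R,R)$ with $\rho<\rho'<1$ and $R$ large), and set $v_h=w_h$ on $\Omega^{(h)}$ and $v_h=u_{\E(h)}$ on $\Omega\setminus\Omega^{(h)}$. Then $v_h\in W^{1,1}(\Omega)$, ${\rm spt}(v_h-u_{\E(h)})\cc\Omega$, and $\tr v_h=1_{\F(h)}-1_{\F(h)^c}$ on $\Omega_0$; so \eqref{eqn:robin} applies and yields
$$0\;\le\;\sum_{h=0}^N\int_{\Omega_+}z^a\bigl(|\nabla v_h|^2-|\nabla u_{\E(h)}|^2\bigr)\,dz=\sum_{h=0}^N\int_{\Omega^{(h)}_+}z^a\bigl(|\nabla w_h|^2-|\nabla u_{\E(h)}|^2\bigr)\,dz,$$
because $v_h-u_{\E(h)}$ vanishes on $\Omega_+\setminus\Omega^{(h)}_+$. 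Combining with the previous display summed over $h$ and the identity $\sum_h P_s(\E(h);B_1)=2P_s(\E;B_1)$, one gets $0\le 2c_0(P_s(\F;B_1)-P_s(\E;B_1))+(N+1)\varepsilon$; sending $\varepsilon\to 0^+$ gives $P_s(\E;B_1)\le P_s(\F;B_1)$.

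The only nontrivial step is the gluing in the reverse direction: the near-optimal domains $\Omega^{(h)}$ delivered by Lemma~\ref{lemma:perim-ext} must be housed inside a common bounded Lipschitz $\Omega$ with $\Omega_0\subseteq B_1$, and the extensions $v_h=u_{\E(h)}$ off $\Omega^{(h)}$ must preserve both the compact-support condition and the prescribed trace on $\Omega_0$. Both issues are handled by exploiting $\E\Delta\F\cc B_1$ to localize the $\Omega^{(h)}$ inside some $B_\rho\cc B_1$ and then taking $\Omega$ to be a large cylinder over $B_{\rho'}$ with $\rho<\rho'<1$; outside $\Omega^{(h)}$ the difference $v_h-u_{\E(h)}$ vanishes, so no extra energy is generated and the two Dirichlet integrals in the last display coincide.
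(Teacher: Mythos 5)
Your proof is correct and takes the same route the paper has in mind: the paper's proof is the single line ``Immediate from Lemma~\ref{lemma:perim-ext},'' i.e.\ apply that lemma chamber-by-chamber and sum, using $\sum_{h=0}^N P_s(\,\cdot\,(h);B_1)=2P_s(\,\cdot\,;B_1)$; your forward and reverse implications are precisely the two halves of that observation spelled out. One small detail worth tightening in the reverse direction: the cylinder $B_{\rho'}\times(-R,R)$ you offer as the common domain $\Omega$ need not contain $\bigcup_h\overline{\Omega^{(h)}}$, since taking $(\Omega^{(h)})_0\cc B_1$ controls only the slice at $\{z=0\}$ and the sets $\Omega^{(h)}\cap\{z>0\}$ may bulge outside the cylinder over $B_{\rho'}$; however, only $\bigcup_h{\rm spt}(w_h-u_{\E(h)})$ (a compact set whose trace on $\{z=0\}$ is compactly contained in $B_1$) needs to be enclosed, and a bounded Lipschitz $\Omega$ with $\Omega_0\subseteq B_1$ that widens away from $\{z=0\}$ clearly does the job, so the gap is cosmetic.
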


\begin{proof}
  Immediate from Lemma \ref{lemma:perim-ext}.
\end{proof}

We can now prove the following monotonicity formula.

\begin{theorem}[Monotonicity formula]\label{thm:monot}
  If $\E$ is a $(\Lambda,r_0)$-minimizing cluster with $0\in\pa\E$, then there exists $\Lambda'\ge0$ (of the form $\Lambda'=C(n,s)\Lambda$) such that
  \[
  \mbox{$\Phi_\E(r)+\Lambda'r^s$ is increasing on $(0,r_0)$}
  \]
  where we have set
  \begin{equation}\label{defn:phi}
    \Phi_\E(r)=\frac1{r^{n-s}}\,\sum_{h=0}^N \int_{U_r^+}z^a\,|\nabla {u}_{\E(h)}|^2\,.
  \end{equation}
  Moreover, if $r_0=\infty$ and $\Lambda=0$ then $\Phi_\E$ is constant if and only if $\E(h)$ is a cone with vertex at $0$ for every $h=0,...,N$.
\end{theorem}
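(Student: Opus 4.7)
The strategy will follow the extension approach of \cite{caffaroquesavin}, adapted to clusters and to the almost-minimality setting. Setting
$$
D(r) := \sum_{h=0}^N \int_{U_r^+} z^a |\nabla u_{\E(h)}|^2\, dX,
$$
so that $\Phi_\E(r) = r^{s-n} D(r)$, the coarea formula gives
$$
D'(r) = \sum_{h=0}^N \int_{\partial U_r^+} z^a\,\big((\pa_\nu u_{\E(h)})^2 + |\nabla_T u_{\E(h)}|^2\big)\, d\H^n,
$$
where $\nu$ is the outer unit normal to $\partial U_r^+$ and $\nabla_T$ the corresponding tangential gradient. Hence $\Phi_\E'(r) = r^{s-n}\big(D'(r) - \tfrac{n-s}{r}D(r)\big)$, and the task becomes showing $D'(r) - \tfrac{n-s}{r}D(r) \ge -C\Lambda r^{n-1}/(1-s)$.

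The key competitor is the $0$-homogeneous extension from $\partial U_r^+$: for each $h$ set $\tilde u_h(\rho\omega) := u_{\E(h)}(r\omega)$ for $\omega \in S^n_+$ and $0 < \rho \le r$, and $\tilde u_h = u_{\E(h)}$ outside $U_r$. Since $\tilde u_h$ has no radial component and $|\nabla\tilde u_h(\rho\omega)|^2 = (r/\rho)^2 |\nabla_T u_{\E(h)}(r\omega)|^2$, a direct computation in spherical coordinates, using $a=1-s$ and hence $n+a-1 = n-s$, yields the identity
$$
\int_{U_r^+} z^a |\nabla\tilde u_h|^2\, dX = \frac{r}{n-s}\int_{\partial U_r^+} z^a |\nabla_T u_{\E(h)}|^2\, d\H^n.
$$
The trace of $\tilde u_h$ on $B_r$ is $1_{\F(h)} - 1_{\F(h)^c}$, where $\F(h)$ is the cone over $\E(h)\cap\partial B_r$ restricted to $B_r$; setting $\F(h) = \E(h)$ outside $B_r$ yields a competitor cluster with $\E\Delta\F \subset \overline{B_r}$ and $\d(\E,\F) \le 2\omega_n r^n$.

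The $(\Lambda,r_0)$-minimality enters through a cluster version of Lemma~\ref{lemma:perim-ext}: applying it chamber by chamber and summing, together with \eqref{defn:alm-min}, gives for every $r < r_0$
$$
D(r) \le \sum_{h=0}^N \int_{U_r^+} z^a |\nabla\tilde u_h|^2\, dX + \frac{C_1\Lambda}{1-s}\,\d(\E,\F) \le \frac{r}{n-s}\,D'(r) + C_2\,\Lambda\, r^n,
$$
where we used $|\nabla_T u_{\E(h)}|^2 \le |\nabla u_{\E(h)}|^2$ and absorbed $2\omega_n/(1-s)$ into $C_2 = C_2(n,s)$. A minor technicality, that $\tilde u_h$ does not have compact support strictly inside $U_r$, is handled by a routine density argument (or by working on $U_{r+\e}$ and letting $\e \to 0^+$). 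Rearranging yields $\Phi_\E'(r) \ge -(n-s)\,C_2\,\Lambda\, r^{s-1}$, so the choice $\Lambda' := (n-s)C_2\Lambda/s$ produces $(\Phi_\E(r) + \Lambda' r^s)' \ge 0$ on $(0,r_0)$, which is the claimed monotonicity.

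For the rigidity statement, when $\Lambda = 0$ and $r_0 = \infty$, constancy of $\Phi_\E$ forces $\Phi_\E'(r) = 0$ for a.e.\ $r>0$, hence equality throughout the chain above. The identity $D(r) = \tfrac{r}{n-s}\sum_h \int_{\partial U_r^+} z^a|\nabla_T u_{\E(h)}|^2\,d\H^n$ combined with the decomposition of $D'(r)$ above forces $\pa_\nu u_{\E(h)} = 0$ on $\partial U_r^+$ for every $h$ and a.e.\ $r$, so each $u_{\E(h)}$ is $0$-homogeneous on $\{z>0\}$; passing to traces at $\{z=0\}$, each chamber $\E(h)$ is a cone with vertex at the origin. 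The principal obstacle will be establishing the cluster version of Lemma~\ref{lemma:perim-ext} with the correct error term $C\Lambda\,\d(\E,\F)/(1-s)$, which requires tracking the constant $c_0$ of that lemma across the sum over chambers and carefully handling the compact-support condition for the admissible competitors in Corollary~\ref{prop cluster min}.
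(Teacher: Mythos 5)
Your proof is correct and the computations all check out: the spherical-coordinate identity $\int_{U_r^+} z^a|\nabla\tilde u_h|^2 = \tfrac{r}{n-s}\int_{\partial U_r^+} z^a|\nabla_T u_{\E(h)}|^2$ (using $n+a-1=n-s$), the bound $\d(\E,\F)\le 2\omega_n r^n$ (which uses that each point lies in at most two of the symmetric differences), the chamber-by-chamber application of Lemma~\ref{lemma:perim-ext} together with the $(\Lambda,r_0)$-minimality, and the rigidity argument from equality forcing $\partial_\nu u_{\E(h)}=0$ and hence $0$-homogeneity.

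The route is mildly but genuinely different in presentation from the paper's. The paper, following \cite{caffaroquesavin}, constructs a one-parameter family of competitors $v_h^\lambda$ (dilation by $\lambda$ inside $U_{\lambda r}$, $0$-homogeneous cone in the annulus $U_r\setminus U_{\lambda r}$, identity outside), applies minimality for each $\lambda$, divides by $1-\lambda$ and lets $\lambda\to 1^-$ to obtain the differential inequality $(n-s)D(r)\le r\sum_h\int_{(\partial U_r)^+}z^a|\nabla_\tau v_h|^2 + C\Lambda r^n$. Your proof uses the endpoint $\lambda=0$ of that family (the $0$-homogeneous cone filling all of $U_r$) as a single competitor and extracts the same inequality $D(r)\le \tfrac{r}{n-s}D'(r)+C\Lambda r^n$ directly, with no limiting argument. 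The one-shot comparison is arguably cleaner here because the error term $\Lambda\,\d(\E,\F)/(1-s)\lesssim\Lambda r^n$ already has the right scaling without the need to divide by $1-\lambda$; the paper's infinitesimal variation is closer to the classical derivation in \cite{caffaroquesavin} and keeps the radial-derivative term $|\nabla v_h\cdot\hat X|^2$ explicit, which makes the rigidity step slightly more transparent. Both are valid; the gap you flagged (extending Lemma~\ref{lemma:perim-ext} to the cluster functional and handling the compact-support requirement on $\Omega$) is indeed present in the paper's argument as well and is handled the same way, so it is not an obstacle.
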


\begin{proof}
  The proof is again a simple adaptation of the argument in \cite{caffaroquesavin}. Given $\l\in(0,1)$ and $r>0$ let us set
  \[
  v_h={u}_{\E(h)}\,,\qquad v_h^\l(X)=\left\{
  \begin{array}{l l}
  v_h(X/\l)\,,&\mbox{if $|X|<\l\,r$}\,,
  \\
  v_h(rX/|X|)\,,&\mbox{if $\l\,r<|X|<r$}\,,
  \\
  v_h(X)\,,&\mbox{if $|X|>r$}\,,
  \end{array}
  \right .
  \]
  In this way
  \begin{eqnarray*}
    &&\int_{z>0}z^a|\nabla v_h^\l|^2-\int_{z>0}z^a|\nabla v_h|^2
    \\
    &=&
    (\l^{n-s}-1)\int_{U_r^+}|\nabla v_h|^2+\int_{\l r}^rds\int_{(\pa U_s)^+}z^a\,|\nabla_\tau v^\l_h|^2\,,
  \end{eqnarray*}
  where $\nabla_\tau v(X)=\nabla v(X)-|X|^{-2}(\nabla v(X)\cdot X)X$. We now notice that
  \[
  \tr v_h^\l=1_{\F^\l(h)}-1_{\F^\l(h)^c}\,,
  \]
  where
  \begin{eqnarray*}
    \F^\l(h)\setminus B_r&=&\E(h)\setminus B_r\,,
    \\
    \F^\l(h)\cap B_{\l r}&=&\l\E(h)\cap B_{\l r}\,,
    \\
    \F^\l(h)\cap (B_r\setminus B_{\l r})&=&(\R_+(\E(h)\cap\pa B_r))\cap (B_r\setminus B_{\l r})\,.
  \end{eqnarray*}
  Since $\F^\l\Delta\E\subset B_r$ if $r<r_0$ then we find
  \[
  P_s(\E)\le P_s(\F^\l)+\frac{\Lambda}{1-s}\,\d(\E,\F^\l)\,,
  \]
  which  by \eqref{eqn:compet-extens} takes the form
  \[
  \sum_{h=0}^N\int_{U_r^+}z^2|\nabla v_h|^2\le \int_{U_r+}z^2|\nabla v_h^\l|^2+\frac{2\Lambda}{(1-s)\,c_0}\,|m(\E)-m(\F^\l)|\,,
  \]
  that is
  \[
  (1-\l^{n-s})\sum_{h=0}^N\int_{U_r^+}|\nabla v_h|^2
  \le
  \int_{\l r}^rds\sum_{h=0}^N\int_{(\pa U_s)^+}z^a\,|\nabla_\tau^\l v_h|^2+\frac{2\Lambda}{(1-s)\,c_0}\,|m(\E)-m(\F^\l)|\,.
  \]
  Now
  \[
  |m(\E)-m(\F^\l)|=\sum_{h=1}^N\,||\E(h)|-|\F^\l(h)||
  \le (1-\l^n)\sum_{h=1}^N\,|\E(h)\cap B_r|+|B_r\setminus B_{r\l}|\le C\,r^n\,(1-\l^n)\,.
  \]
  In this way since $\l\in(0,1)$
  \[
  \frac{1-\l^{n-s}}{1-\l}\,\sum_{h=0}^N\int_{U_r^+}|\nabla v_h|^2
  \le
  \frac1{1-\l}\int_{\l r}^rds\sum_{h=0}^N\int_{(\pa U_s)^+}z^a\,|\nabla_\tau v_h|^2+C(n,s)\,\Lambda\,r^n\,\frac{1-\l^n}{1-\l}\,.
  \]
  We notice that $|\nabla v^\l_h(X)| = |\nabla v_h(rX/|X|)|$ for every $X$ with $\l\,r<|X|<r$ and we let $\l\to 1^-$ to find,
  \[
  (n-s)\sum_{h=0}^N\int_{U_r^+}|\nabla v_h|^2\le r\,\sum_{h=0}^N\int_{(\pa U_r)^+}z^a\,|\nabla_\tau v_h|^2+C(n,s)\,\Lambda\,r^n\,.
  \]
  Therefore
  \begin{eqnarray*}
  r^{2(n-s)}\,\Phi_\E'(r)&=&r^{n-s-1}\bigg\{r\sum_{h=0}^N \int_{(\pa U_r)^+}z^a\,|\nabla v_h|^2-(n-s)\,\sum_{h=0}^N\int_{U_r^+}|\nabla v_h|^2\bigg\}
  \\
  &\ge&r^{n-s-1}\bigg\{r\sum_{h=0}^N \int_{(\pa U_r)^+}z^a\,|\nabla v_h\cdot\hat{X}|^2-C(n,s)\Lambda\,r^n\bigg\}\,,
  \end{eqnarray*}
  where $\hat{X}=X/|X|$. Rearranging terms we find
  \[
  \Big(\Phi_\E(r)+\frac{C(n,s)\Lambda}s\,r^s\Big)'\ge \frac1{r^{n-s}}\sum_{h=0}^N \int_{(\pa U_r)^+}z^a\,|\nabla v_h\cdot\hat{X}|^2\,,
  \]
  for every $r<r_0$. This proves that $\Phi_\E(r)+{\Lambda}\,r^s$ is increasing on $(0,r_0)$. 
  Assume now that $r_0=\infty$ and $\Lambda=0$. In this case $\Phi_\E$ is increasing on $(0,\infty)$, and $\Phi_\E$ is constant on $(0,\infty)$ if and only if $\nabla v_h$ is homogeneous of degree $0$ for every $h=0,...,N$, that is if and only if $\E(h)$ is a cone with vertex at the origin for every $h=0,...,N$.
  \end{proof}

\begin{proof}[Proof of Theorem \ref{thm cono tangente}] Without loss of generality let us assume that $x=0$, so that $\E^{0,r}(h)=r^{-1}\E(h)$.
By the upper perimeter estimate of Lemma~\ref{lemma perimeter volume estimate}, for every $R,r>0$ and $h=0,...,N$ we have
$$
P_s\big((r^{-1}\E(h)) \cap B_R\big) = r^{s-n} P_s(\E(h) \cap B_{R\,r}(x)) \leq C_0 R^{n-s}\,.
$$
In particular for every $h=0,...,N$ there exists $\F(h)\subset\R^n$ such that, up to extracting subsequences, $r_j^{-1}\E(h)\to\F(h)$ in $L^1_{{\rm loc}}(\R^n)$ as $j\to\infty$. Define $M\le N$ so that there are exactly $M+1$ indexes $h=0,...,N$ such that $|\F(h)|>0$. Then we can find a injective function $\s:\{0,...,M\}\to\{0,...,N\}$ such that, setting $\K(i)=\F(\s(i))$, we have $r_j^{-1}\,\E(\s(i))\to\K(i)$ in $L^1_{{\rm loc}}(\R^n)$ as $j\to\infty$. This proves \eqref{eqn:blow-up-l1}, which in turn implies \eqref{eqn:hausd} thanks to the volume density estimates in Lemma \ref{lemma perimeter volume estimate}. Since $r_j\,\E$ is $(\Lambda\,r_j,r_j/r_0)$-minimizing in $\R^n$, by a simple variant of \cite[Theorem 3.3]{caffaroquesavin} we see that $\K$ is $(0,\infty)$-minimizing in $\R^n$. Moreover, by scaling
\[
\Phi_{\E}(r_j\,r)=\Phi_{r_j^{-1}\E}(r)\qquad\forall r>0
\]
so that
\[
\lim_{j\to\infty}\Phi_{r_j^{-1}\E}(r)=\Phi_{\E}(0^+)\,,\qquad\forall r>0\,.
\]
At the same time, by arguing as in \cite[Proposition 9.1]{caffaroquesavin}, we get
\[
\lim_{j\to\infty}\Phi_{r_j^{-1}\E}(r)=\Phi_\K(r)\,,\qquad\forall r>0\,.
\]
In conclusion, $\Phi_\K(r)$ is constant over $r>0$, and since $\K$ is $(0,\infty)$-minimizing in $\R^n$ we can exploit Theorem \ref{thm:monot} to deduce that $\K$ is conical.
\end{proof}

We conclude this section with a last result that can be proved with the aid of the extension problem and that it is useful in the dimension reduction argument (see next section).

\begin{proposition}\label{prop:cono-x-R}
A cluster $\E$ is $(0,\infty)$-minimizing  in $\R^n$ if and only if $\E \times \R$ is $(0,\infty)$-minimizing  in $\R^{n+1}$. Here, by definition, $(\E\times\R)(h)=\E(h)\times\R$ for every $h=1,...,N$.
\end{proposition}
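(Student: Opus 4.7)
The plan is to use the extension formulation together with the identity
\[
u_{\E(h)\times\R}(x,t,z) = u_{\E(h)}(x,z), \qquad x\in\R^n,\ t\in\R,\ z\ge 0,
\]
which follows from uniqueness of the minimal extension: the right-hand side, viewed as a $t$-independent function on $\R^{n+2}_+$, solves $\Div(z^a\nabla u)=0$ and has the trace $1_{\E(h)\times\R}-1_{(\E(h)\times\R)^c}$ at $z=0$. Both directions then reduce, through Corollary~\ref{prop cluster min}, to comparing weighted Dirichlet energies of admissible extensions on bounded domains of the respective upper half-spaces.

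For the forward direction ``$\E$ $(0,\infty)$-minimizing in $\R^n$ $\Rightarrow\E\times\R$ $(0,\infty)$-minimizing in $\R^{n+1}$'' I would take an arbitrary competitor cluster $\G$ with $\G\Delta(\E\times\R)\cc B^{n+1}_R$ and admissible competitor extensions $V_h$ on a bounded Lipschitz domain $\Omega\subset\R^{n+2}_+$ with $\Omega_0\subseteq B^{n+1}_R$, $\tr V_h=1_{\G(h)}-1_{\G(h)^c}$, and $\mathrm{spt}(V_h-u_{\E(h)\times\R})\cc\Omega$. By Fubini, for a.e.\ $t\in\R$ the slice $V_h^t(x,z):=V_h(x,t,z)$ satisfies $\tr V_h^t=1_{\G^t(h)}-1_{\G^t(h)^c}$ for $\G^t(h):=\{x:(x,t)\in\G(h)\}$, is supported on $\Omega^t:=\{(x,z):(x,t,z)\in\Omega\}$ away from $u_{\E(h)}$, and the cluster $\G^t$ differs from $\E$ on a set compactly contained in the $t$-slice of $\Omega_0$, hence is a legitimate competitor to $\E$. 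A slice-wise application of Corollary~\ref{prop cluster min} to $\E$ gives
\[
\sum_{h=0}^N\int_{\Omega^t}z^a|\nabla_{(x,z)}V_h^t|^2\ge\sum_{h=0}^N\int_{\Omega^t}z^a|\nabla u_{\E(h)}|^2\qquad\text{for a.e.\ }t\in\R,
\]
and integrating in $t$, together with $|\nabla V_h|^2\ge|\nabla_{(x,z)}V_h|^2$ and the $t$-invariance of $u_{\E(h)}$, produces $\sum_h\int_\Omega z^a|\nabla V_h|^2\ge\sum_h\int_\Omega z^a|\nabla u_{\E(h)\times\R}|^2$, so the converse implication of Corollary~\ref{prop cluster min} shows that $\E\times\R$ is minimizing.

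For the reverse direction I would argue by contradiction. If $\E$ is not minimizing, there exist $R>0$ and a cluster $\F$ with $\F\Delta\E\cc B^n_R$ and $P_s(\F)-P_s(\E)=-\eta<0$ (the difference of relative $s$-perimeters over any bounded set containing $\F\Delta\E$). For each $L>0$ I would set
\[
\G^L(h):=\bigl(\F(h)\times[-L,L]\bigr)\cup\bigl(\E(h)\times([-L,L]^c)\bigr),\qquad h=0,\dots,N,
\]
so that $\G^L\Delta(\E\times\R)=(\F\Delta\E)\times[-L,L]\cc B^{n+1}_{R+L+1}$, making $\G^L$ an admissible competitor to $\E\times\R$. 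Setting $J:=[-L,L]$ and decomposing $P_s(\G^L(h))-P_s(\E(h)\times\R)$ into the slab-slab piece $I_s(\F(h)\times J,\F(h)^c\times J)-I_s(\E(h)\times J,\E(h)^c\times J)$ and the cross-slab pieces involving $J\times J^c$, Fubini in $(t,s)$ together with $\int_\R(r^2+\tau^2)^{-(n+1+s)/2}\,d\tau=c(n,s)\,r^{-(n+s)}$ gives
\[
K_J(r):=\int_J\int_J\frac{dt\,ds}{(r^2+(t-s)^2)^{(n+1+s)/2}}=2L\,c(n,s)\,r^{-(n+s)}+R_L(r),
\]
and the leading term contributes exactly $2L\,c(n,s)\,(P_s(\F(h))-P_s(\E(h)))$ to the slab-slab difference. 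Summing over $h$ yields $P_s(\G^L)-P_s(\E\times\R)=-2L\,c(n,s)\,\eta+o(L)$, strictly negative for large $L$, contradicting the minimality of $\E\times\R$.

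The principal obstacle is controlling the $o(L)$ remainder in the reverse direction, since some of the naive sub-leading integrals, such as $\int\int(1_{\F(h)}(x)1_{\F(h)^c}(y)-1_{\E(h)}(x)1_{\E(h)^c}(y))\,|x-y|^{-(n-1+s)}\,dx\,dy$, are individually divergent when the chamber or its complement has infinite measure. To extract the cancellations enforced by $\F\Delta\E\cc B^n_R$ I would use the algebraic identity
\[
1_{\F(h)}(x)\,1_{\F(h)^c}(y)-1_{\E(h)}(x)\,1_{\E(h)^c}(y) = (1_{\F(h)}-1_{\E(h)})(x)\,1_{\F(h)^c}(y) - 1_{\E(h)}(x)\,(1_{\F(h)}-1_{\E(h)})(y),
\]
which forces at least one factor with compact support into every sub-leading integrand; combined with the pointwise bounds $|R_L(r)|\le C\min(r^{-(n-1+s)},L\,r^{-(n+s)})$ and $\int_J\int_{J^c}(r^2+(t-s)^2)^{-(n+1+s)/2}\,dt\,ds\le C\min(r^{-(n-1+s)},L\,r^{-(n+s)})$, integration in $y$ after fixing $x\in\F\Delta\E$ gives a bound of order $L^{1-s}=o(L)$ for both the cross-slab interactions and the sub-leading slab-slab correction, completing the argument.
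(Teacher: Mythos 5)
Your proof is correct. The forward direction is essentially the same extension-and-slicing argument as \cite[Theorem~10.1]{caffaroquesavin}, which the paper cites: by uniqueness $u_{\E(h)\times\R}$ is independent of the extra variable $t$, a.e.\ $t$-slice of a competitor extension is itself admissible for $\E$ via Corollary~\ref{prop cluster min}, and one integrates in $t$ using $|\nabla V|^2\ge|\nabla_{(x,z)}V|^2$. The reverse direction, however, takes a genuinely different route. CRS also argue at the level of the extension: if $v_h$ is a strictly better extension of a competitor $\F(h)$, one builds an admissible extension in $\R^{n+2}_+$ equal to $v_h$ for $|t|<L$, to $u_{\E(h)}$ for $|t|>L+1$, and linearly interpolated in between; the energy saving is linear in $L$ while the transition-layer cost is bounded uniformly in $L$, so no tail estimates are needed. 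You instead compare perimeters directly, inserting the slab $\F\times[-L,L]$ and extracting the linear-in-$L$ saving from the Fubini identity $\int_\R(r^2+\tau^2)^{-(n+1+s)/2}\,d\tau=c(n,s)\,r^{-(n+s)}$. Your version buys geometric transparency (the competitor cluster is explicit) at the price of controlling the sub-leading terms: this you handle correctly, since the algebraic cancellation identity forces a compactly supported factor into every remainder integrand, and the kernel bounds $|R_L(r)|\le C\min\{r^{-(n-1+s)},Lr^{-(n+s)}\}$ together with the analogous bound for the cross-slab kernel $M_J$ integrate to $O(L^{1-s})=o(L)$. Both routes work; the extension-level gluing is the slicker ``immediate adaptation'' the paper has in mind.
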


\begin{proof}
  This is an immediate adaptation of \cite[Theorem 10.1]{caffaroquesavin}.
\end{proof}

\subsection{Dimension reduction argument} Given Theorem \ref{thm cono tangente} and Proposition \ref{prop:cono-x-R} we can exploit the standard dimension reduction argument of Federer to give estimates on the Hausdorff dimension of $\S(\E)$.

\begin{theorem}[Dimension reduction]\label{thm:dim-red} If $\K$ is a minimizing conical $M$-cluster in $\R^n$, $x_0=e_n \in \partial \K$ and $\l_k\to\infty$ as $k\to\infty$, then there exists a minimizing conical cluster $\K'$ in $\R^{n-1}$ such that, up to extracting subsequences,
\[
\lambda_k (\K-x_0)\to \K' \times \R\qquad\mbox{in $L^1_{\rm loc} (\R^n)$}
\]
as $k\to\infty$.
\end{theorem}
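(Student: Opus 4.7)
The plan is to apply Theorem~\ref{thm cono tangente} to the cluster $\K$ at the point $x_0=e_n\in\pa\K$ along the scales $r_k=1/\lambda_k\to 0^+$. Since $\K$ is minimizing conical, it is $(0,\infty)$-minimizing in $\R^n$, so Theorem~\ref{thm cono tangente} yields (after passing to a subsequence) an injection $\sigma$ and a minimizing conical cluster $\mathcal T$ in $\R^n$ such that $\lambda_k(\K(\sigma(i))-e_n)\to\mathcal T(i)$ in $L^1_{\loc}(\R^n)$ for each $i$.

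\medskip

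The crucial observation is that $\mathcal T$ is invariant under translations in the direction $e_n$. Fix $t\in\R$ and take $k$ large enough that $\lambda_k>t$. A point $x\in\R^n$ lies in $\lambda_k(\K(i)-e_n)+te_n$ if and only if $x/\lambda_k+(1-t/\lambda_k)e_n\in\K(i)$, and by the conicality of $\K$, applied with the positive scaling factor $\lambda_k/(\lambda_k-t)$, this is equivalent to $x/(\lambda_k-t)+e_n\in\K(i)$. Hence
\[
\lambda_k(\K(i)-e_n)+te_n=(\lambda_k-t)(\K(i)-e_n).
\]
Since $\lambda_k-t\to\infty$, the right-hand side converges in $L^1_{\loc}(\R^n)$ to the same blow-up limit $\mathcal T(i)$ (which depends only on the divergence of the scale). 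Passing to the limit in both sides, we conclude that $\mathcal T(i)+te_n=\mathcal T(i)$ for every $t\in\R$ and every chamber.

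\medskip

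Once translation invariance along $e_n$ is established, Fubini gives $\mathcal T(i)=\K'(i)\times\R$ for some $\K'(i)\subset\R^{n-1}$, and the conicality of $\mathcal T$ with vertex at the origin immediately forces $\K'$ to be conical in $\R^{n-1}$. Finally, Proposition~\ref{prop:cono-x-R} applied in the converse direction turns the $(0,\infty)$-minimality of $\mathcal T=\K'\times\R$ in $\R^n$ into the $(0,\infty)$-minimality of $\K'$ in $\R^{n-1}$. Reindexing if necessary (and padding $\K'$ with empty chambers for indices of $\K$ whose local density at $e_n$ vanishes) gives convergence $\lambda_k(\K-e_n)\to\K'\times\R$ as stated.

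\medskip

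The main obstacle is the translation-invariance identity: it is the quantitative form of the heuristic that blowing up at a point lying on a ray of a cone erases the ray direction, and it must be verified chamber-by-chamber, using homogeneity of each $\K(i)$ rather than any regularity of $\pa\K$. Everything else---conicality of $\K'$, its minimality, and the $\R$-slab structure of the limit---is routine bookkeeping built on this identity, Theorem~\ref{thm cono tangente}, and Proposition~\ref{prop:cono-x-R}.
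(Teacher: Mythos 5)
Your proof is correct and reaches the same conclusion via a noticeably different route for the key step (translation invariance of the blow-up limit along $e_n$). The paper argues geometrically: it considers the cone with vertex $-\lambda_k e_n$ generated by a ball $B_\e(x)\subset\overline\K(h)$, shows this cone is contained in $\K_k(h)=\lambda_k(\K-e_n)(h)$ by conicality, and passes to the limit as $k\to\infty$ to obtain $B_\e(x)+\R e_n\subset\overline\K(h)$. You instead prove the algebraic identity $\lambda_k(\K(i)-e_n)+te_n=(\lambda_k-t)(\K(i)-e_n)$ directly from the $0$-homogeneity of $\K(i)$, which is cleaner and avoids dealing with convergence of auxiliary cones. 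However, your justification for why the right-hand side has the same $L^1_{\rm loc}$ limit --- ``converges to the same blow-up limit $\mathcal T(i)$ (which depends only on the divergence of the scale)'' --- is stated too loosely; in general a sequential blow-up limit \emph{does} depend on the particular sequence of scales. The correct reason is that $(\lambda_k-t)(\K(i)-e_n)=\mu_k\cdot\lambda_k(\K(i)-e_n)$ with $\mu_k=(\lambda_k-t)/\lambda_k\to1$, and since $\mathcal T(i)$ is a cone with vertex at the origin one has $\mu_k\mathcal T(i)=\mathcal T(i)$; combining this with
\[
\big|\mu_k F_k\,\Delta\,\mu_k\mathcal T(i)\big|\cap B_R=\mu_k^n\,\big|F_k\,\Delta\,\mathcal T(i)\big|\cap B_{R/\mu_k}\to0
\]
where $F_k=\lambda_k(\K(i)-e_n)$ gives the desired convergence. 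With that clarification supplied, the rest of your argument (Fubini to get the cylinder structure, conicality of $\K'$ from conicality of $\mathcal T$, and Proposition~\ref{prop:cono-x-R} for minimality) is routine and matches the paper's bookkeeping.
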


\begin{proof}
By Theorem \ref{thm cono tangente} there exists a conical minimizing $M$-cluster $\overline\K$ such that, up to extracting subsequences, $\lambda_k (\K-e_n)\to\overline{\K}$ in $L^1_{{\rm loc}}(\R^n)$. We want to prove that $\overline{\K}=\K' \times \R$ for some conical cluster $\K'$ in $\R^{n-1}$, and the fact that $\K'$ is minimizing will then follow by Proposition~\ref{prop:cono-x-R}.  Since $\partial \overline{\K}$ is a closed set of measure $0$ thanks to the density estimates, it is enough to prove that the interior of each chamber is constant in the $x_n$-direction, namely that for every chamber $\overline{\K}(h)$ and for every ball $B_\e(x) \subseteq \overline{\K}(h)$ we have
\begin{equation}
\label{eqn:K-splits}
B_\e(x)+\R e_n \subseteq \overline\K(h).
\end{equation}
To prove this claim, we notice that the cone with vertex in $-\lambda_k e_n$ generated by $B_\e(x)$ converges locally to $B_\e(x)+\R e_n $.
Moreover, setting $\K_k = \lambda_k (\K-e_n)$, we have that $B_\e(x) \cap {\K_k}(h)$ converges to $B_\e(x) \cap \overline{\K}(h) = B_\e(x)$.
As a consequence, the  difference between the indicator of the cones with vertex in $-\lambda_k e_n$ generated by $B_\e(x)$, and by $B_\e(x) \cap {\K_k}(h)$ respectively, converges in $L^1_{\rm loc}(\R^n)$
 to $0$. Putting together these facts, we deduce that the cone with vertex in $-\lambda_k e_n$ generated by $B_\e(x) \cap {\K_k}(h)$ (which is contained in $\K_k(h)$ because by assumption $\K_k(h)$ is a cone with vertex $-\lambda_k e_n$)
 converges in $L^1_{\rm loc}(\R^n)$ to $B_\e(x)+\R e_n $. By the convergence of $\K_k(h)$ to $\overline\K(h)$, we find that \eqref{eqn:K-splits} holds.
\end{proof}

\begin{theorem}[Dimension of the singular set]\label{thm:sing-set-dim} If $\E$ is a $(\Lambda,r_0)$-minimizing $N$-cluster in $\R^n$, then the singular set $\Sigma(\E)$ is a closed set of  Hausdorff dimension at most $n-2$, that is,
$$
\H^\ell(\S(\E)) =0 \qquad \forall \ell >n-2.
$$
As a consequence, $\partial \E$ has Hausdorff dimension $n-1$, namely
$$
\H^\ell(\partial{\E}) =0 \qquad \forall\ell>n-1.
$$
\end{theorem}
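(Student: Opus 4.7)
The plan is to run Federer's dimension-reduction scheme, with Theorem~\ref{thm cono tangente} (blow-ups yield minimizing conical clusters) and Theorem~\ref{thm:dim-red} (blow-ups of a cone at a non-origin singular point split off an $\R$-factor) as the iterative engine. The base case lives in $\R^1$: up to null sets, the only non-trivial conical cluster there is the partition $\K = \{(-\infty,0),(0,\infty)\}$, whose unique boundary point $0$ is regular --- the blow-up at $0$ is $\K$ itself, a pair of complementary half-spaces --- so $\S(\K)=\emptyset$ for every minimizing conical cluster in $\R^1$.

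\textbf{Closedness and upper semicontinuity.} I would first note that $\S(\E)=\pa\E\setminus{\rm Reg}(\E)$ is closed in $\R^n$: by Theorem~\ref{thm reg part 1}, ${\rm Reg}(\E)$ is relatively open in $\pa\E$, which is closed by definition. The crucial supporting lemma is upper semicontinuity of $\S$ under blow-ups: if $\E_k$ is a sequence of almost-minimizing clusters with the almost-minimality constants controlled uniformly, converging in $L^1_{\loc}$ to $\E$, and $x_k\in\S(\E_k)$ with $x_k\to x\in\pa\E$, then $x\in\S(\E)$. I would argue this by contraposition: if $x\in{\rm Reg}(\E)$, Step 2 in the proof of Theorem~\ref{thm reg part 1} produces a ball $B_r(x)$ on which $\pa\E$ is confined to a thin neighborhood of a hyperplane, triggering the Caputo--Guillen $\epsilon$-regularity. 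Because the uniform density estimates of Corollary~\ref{lemma perimeter volume estimate} force $\pa\E_k\to\pa\E$ in local Hausdorff distance, the same confinement persists for $\pa\E_k$ on $B_{r/2}(x)$ when $k$ is large, hence $x_k\in{\rm Reg}(\E_k)$, contradicting $x_k\in\S(\E_k)$.

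\textbf{The iteration.} Assume for contradiction $\H^\ell(\S(\E))>0$ for some $\ell>n-2$. By the standard upper density theorem for $\H^\ell$, pick $x_0\in\S(\E)$ with $\Theta^{*\ell}(\S(\E),x_0)>0$ and extract $r_k\to 0^+$ with $\H^\ell(\S(\E)\cap B_{r_k}(x_0))\ge c\,r_k^\ell$. Theorem~\ref{thm cono tangente} provides a minimizing conical cluster $\K_0$ in $\R^n$ as an $L^1_{\loc}$-limit of $\E^{x_0,r_k}$, and the upper-semicontinuity lemma combined with the lower semicontinuity of $\H^\ell$ for Hausdorff-convergent sequences with uniform local $\H^\ell$ lower bounds gives $\H^\ell(\S(\K_0)\cap\overline{B_1})>0$. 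Iteratively, given a minimizing conical cluster $\K_{j-1}$ in $\R^{n-j+1}$ with $\H^{\ell-j+1}(\S(\K_{j-1}))>0$ locally and $\ell-j+1>0$, the conical set $\S(\K_{j-1})$ must contain a non-origin density point $y_{j-1}$; Theorem~\ref{thm:dim-red} at $y_{j-1}$ then produces a blow-up of the form $\K_j\times\R$ with $\K_j$ a minimizing conical cluster in $\R^{n-j}$. From $\S(\K_j\times\R)=\S(\K_j)\times\R$ (by $\R$-translation invariance of the limit cluster) and a Fubini-type slicing, $\H^{\ell-j}(\S(\K_j))>0$ locally. The precondition $\ell-j+1>0$ holds for all $j=1,\dots,n-1$ precisely because $\ell>n-2$; at the final step, the blow-up at $y_{n-2}\in\S(\K_{n-2})\setminus\{0\}$ places $0\in\S(\K_{n-1}\times\R)$, whence $\S(\K_{n-1})\ne\emptyset$ for a minimizing conical cluster in $\R^1$ --- contradicting the base case.

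\textbf{Consequence and main obstacle.} From $\dim_H\S(\E)\le n-2$ and the fact that ${\rm Reg}(\E)=\pa\E\setminus\S(\E)$ is an $(n-1)$-dimensional $C^{1,\alpha}$-hypersurface, it is immediate that $\H^\ell(\pa\E)=0$ for $\ell>n-1$, together with the local $\H^{n-1}$-rectifiability of $\pa\E\setminus\S(\E)$. The hardest part of the plan is the upper-semicontinuity lemma: one must upgrade the qualitative regularity from Theorem~\ref{thm reg part 1} to a quantitative, uniform $\epsilon$-regularity statement compatible with $L^1_{\loc}$-convergence of almost-minimizers, so that singular points of the approximants cannot converge to a regular point of the limit. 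Once this is in hand, the remainder is a direct transcription of Federer's classical dimension-reduction argument.
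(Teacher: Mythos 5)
Your proposal is the standard Federer dimension-reduction argument, which is exactly what the paper invokes (it cites \cite[Proof of Theorem 10.4]{caffaroquesavin} ``verbatim''), with Theorems~\ref{thm cono tangente} and \ref{thm:dim-red} as the blow-up engine and the base case in $\R^1$; the upper semicontinuity of the singular set under convergence of uniformly almost-minimizing clusters is indeed the crux, and your sketch of it via the Caputo--Guillen $\e$-regularity plus the uniform density estimates of Corollary~\ref{lemma perimeter volume estimate} is the right mechanism. One small correction: the measure-theoretic step that transfers positivity from $\S(\E^{x_0,r_k})\cap\ov{B}_1$ to $\S(\K_0)\cap\ov{B}_1$ is the \emph{upper} semicontinuity of the Hausdorff \emph{content} $\H^\ell_\infty$ (not of $\H^\ell$ itself, and not lower semicontinuity) along Hausdorff-converging compacts, combined with the density theorem at $x_0$; as you wrote it the inequality points the wrong way, although the intended conclusion is correct.
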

\begin{proof}
From Theorem~\ref{thm:dim-red} and Proposition~\ref{prop:cono-x-R} it follows that the singular set of any minimizing cluster $\E$ has Hausdorff dimension $n-2$. This is a classical argument, which can be repeated {\it verbatim} from \cite[Proof of Theorem 10.4]{caffaroquesavin}: first, one proves that $\H^\ell(\S(\E)) = 0$ for any $\ell$ such that $\H^\ell(\S(\K)) = 0$ for every conical minimizing cluster $\K$; next, one shows that $\H^\ell(\S(\K)) = 0$ for every conical minimizing cluster $\K \subseteq \R^n$, then  $\H^{\ell+1}(\Sigma_{\tilde \K}) = 0$ for every conical minimizing cluster $\tilde \K \subseteq \R^{n+1}$. In proving both claims one uses a compactness argument to say that for every $x\in \S(\E)$ there exists $\delta(x)>0$ such that for any $\delta \leq \delta(x)$ and any set $D\subseteq \S(\E) \cap B_\de(x)$ there exists a covering of $D$ with balls $B_{r_i}(x_i)$ such that $x_i\in D$ and $\sum r_i^\ell \leq \delta^\ell /2$. Finally, since $\partial \E$ is a $C^1$-hypersurface in a neighborhood of each $x\in{{\rm Reg}}(\E)$, we conclude that $\partial \E$ has Hausdorff dimension $n-1$.
\end{proof}

In the planar case $n=2$ we can say more by exploiting the fact, proved in \cite{savinvaldinoci}, that every conical minimizing $2$-cluster in $\R^2$ is given by two complementary half-spaces. By definition of ${\rm Reg}(\E)$, this fact implies that if $x\in\S(\E)$ for a $(\Lambda,r_0)$-minimizing cluster in $\R^2$ and $\K$ is a conical minimizing $M$-cluster arising as a blow-up limit of $\E$ at $x$, then $M\ge 2$ (that is, $\K$ has at least three non-trivial conical sectors). With this remark in mind we can prove the following fact.

\begin{proposition}\label{prop:dim2} The singular set $\S(\E)$ of a $(\Lambda,r_0)$-minimizing cluster $\E$ in $\R^2$ is locally discrete.
\end{proposition}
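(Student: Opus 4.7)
The plan is to proceed by contradiction. Suppose $\S(\E)$ is not locally discrete, and pick $x_0 \in \S(\E)$ together with a sequence $\{x_j\} \subset \S(\E) \setminus \{x_0\}$ with $x_j \to x_0$. Set $r_j = |x_j - x_0|$ and $v_j = (x_j - x_0)/r_j \in S^1$. By Theorem \ref{thm cono tangente} applied at $x_0$ along the scales $r_j$, after extracting subsequences, $\E^{x_0,r_j}$ converges both in $L^1_{\rm loc}(\R^2)$ and in local Hausdorff distance of boundaries to a minimizing conical $M$-cluster $\K$ in $\R^2$, while $v_j \to v \in S^1$. Since each $\E^{x_0,r_j}$ is $(\Lambda r_j, r_0/r_j)$-minimizing with uniformly improving parameters, Corollary \ref{lemma perimeter volume estimate} yields density estimates around $v_j$ that are uniform in $j$, and passage to the limit gives $v \in \pa\K$.

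The crucial step is to prove $v \in \S(\K)$. I would argue once more by contradiction: assuming $v \in {\rm Reg}(\K)$, only two chambers of $\K$, say $\K(a)$ and $\K(b)$, are present in some ball $B_\rho(v)$ (by the infiltration Corollary \ref{cor:infiltration}), and they are close to complementary half-spaces there. Transferring this via $L^1_{\rm loc}$-convergence, the same Corollary \ref{cor:infiltration} applied uniformly to $\E^{x_0,r_j}$ eliminates all chambers but $\sigma(a)$ and $\sigma(b)$ from a ball $B_{\rho/2^{N+1}}(v_j)$ for large $j$. Following step one of the proof of Theorem \ref{thm reg part 1}, I would extract a uniform $(\Lambda', s_0)$-minimality for the single chamber $\E^{x_0,r_j}(\sigma(a))$ near $v_j$, and then the $\varepsilon$-regularity theorem from \cite{caputoguillen} would give $v_j \in {\rm Reg}(\E^{x_0,r_j})$ for $j$ large. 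Rescaling back, $x_j \in {\rm Reg}(\E)$, contradicting $x_j \in \S(\E)$.

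With $v \in \S(\K) \cap S^1$ in hand, the conicality of $\K$ promotes this to $tv \in \S(\K)$ for all $t>0$. Applying the dimension reduction Theorem \ref{thm:dim-red} to $\K$ at the point $v$ (after a rotation bringing $v$ to $e_2$), I obtain, along some $\lambda_k \to \infty$, a convergence $\lambda_k(\K - v) \to \K' \times \R$ where $\K'$ is a minimizing conical cluster in $\R^{n-1}=\R$. The elementary point is that every scale-invariant subset of $\R$ equals, modulo a null set, one of $\emptyset$, $(-\infty,0)$, $(0,\infty)$, or $\R$; hence any conical cluster in $\R$ with a non-trivial partition has exactly two non-trivial chambers, namely the complementary half-lines. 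Thus $\K'\times\R$ has at most two non-trivial chambers in $\R^2$, and since $v \in \pa\K$ forces the blow-up to be non-trivial, $\K'\times\R$ must consist of two complementary half-planes. By the definition of ${\rm Reg}(\K)$ this yields $v \in {\rm Reg}(\K)$, contradicting $v \in \S(\K)$.

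The main obstacle is the transfer argument in the second paragraph: one must verify that the constants in the infiltration lemma, in the single-chamber almost-minimality extraction, and in the $\varepsilon$-regularity of \cite{caputoguillen} all remain uniform under the rescaling. The parameters $\Lambda r_j$ and $r_0/r_j$ of the rescaled clusters evolve favorably as $j\to\infty$, so such uniformity should hold, but carrying this out cleanly is the technical heart of the argument; everything else is either the standard tangent-cone and dimension-reduction machinery already set up in Theorems \ref{thm cono tangente} and \ref{thm:dim-red}, or the trivial classification of scale-invariant subsets of $\R$.
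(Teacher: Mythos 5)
Your proof is correct and shares the paper's overall architecture: argue by contradiction, blow up at $x_0$ along the scales $r_j=|x_j-x_0|$ to obtain (via Theorem \ref{thm cono tangente}) a minimizing conical cluster $\K$ and an accumulation direction $v\in S^1$, show that $v\in\S(\K)$, and then apply the dimension reduction Theorem \ref{thm:dim-red} at $v$ plus the trivial classification of conical clusters in $\R$ to force $v\in{\rm Reg}(\K)$, a contradiction.

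Where you diverge from the paper is in how $v\in\S(\K)$ is established. The paper propagates \emph{singularity forward}: by \cite{savinvaldinoci}, each singular rescaled point $v_j$ (the image of $x_j$) must see at least three chambers with positive density; the uniform lower density bound then gives $|\E^{x_0,r_j}(h)\cap B_r(v_j)|\ge c\,r^n$ for three indices $h$, and this survives the $L^1_{\rm loc}$ passage to the limit, so three chambers of $\K$ have positive volume near $v$ and $v\notin{\rm Reg}(\K)$. You instead propagate \emph{regularity backward}: assuming $v\in{\rm Reg}(\K)$, use infiltration and the Hausdorff convergence of boundaries from Theorem \ref{thm cono tangente} to show that near $v_j$ the rescaled cluster has only two chambers with boundary trapped in a thin slab, extract a single-chamber $(\Lambda',s_0)$-minimality as in Step one of Theorem \ref{thm reg part 1}, and invoke the $\varepsilon$-regularity of \cite{caputoguillen} to get $v_j\in{\rm Reg}(\E^{x_0,r_j})$, hence $x_j\in{\rm Reg}(\E)$, a contradiction. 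These two routes are contrapositives of each other, and both are valid. The paper's version is lighter technically: passing a quantitative density lower bound to an $L^1_{\rm loc}$ limit is essentially free, whereas your version must track the uniformity of several constants through the rescaling (the almost-minimality parameter $\Lambda r_j\to0$, the radius at which only two chambers survive, the slab width for \cite{caputoguillen}). You correctly identify this as the technical heart of your argument, and it does go through because the relevant parameters improve as $j\to\infty$; still, if I had to pick one argument to write down in full, the paper's forward density transfer is the shorter road.
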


\begin{proof} Assume by contradiction that there exists a sequence $\{x_k \} _{k \in \N} \subseteq \S(\E)$ such that $x_k$ converges to $x_0 \in \S(\E)$ as $k \to \infty$. Set
\[
\lambda_k = |x_k-x_0|^{-1}\qquad \E_k=\E^{x_0,\l_k^{-1}}\,,
\]
and assume up to rotations that
\[
\frac{x_k-x_0}{|x_k-x_0|}=v\in S^1\qquad\forall k\in\N\,.
\]
In this way, $\E_k$ is $(\Lambda/\l_k\,,r_0\,\l_k)$-minimizing in $\R^2$ with $0,v\in\S(\E_k)$ for every $k\in\N$. By Theorem~\ref{thm cono tangente}, up to extracting subsequences, $\lambda_k (\E-x_0)\to\overline{\K}$ in $L^1_{{\rm loc}}(\R^2)$ with $0,v\in\S(\overline{\K})$. The fact that $v\in\S(\overline{\K})$ is based on the fact that, as notice above, by \cite{savinvaldinoci} $x_k\in\S(\E_k)$ implies (up to extracting a subsequence in $k$ and up to reordering the chambers of $\E$) that $|\E_k \cap B_r(v)|> 0$ for $h=1,2,3$ and for every $r>0$ and $k\in \N$.
Moreover, by the density estimates of Lemma~\ref{lemma perimeter volume estimate}
 (note that they are uniform with respect to $k$, since they are applied to a blow-up of a single cluster and so they hold at every scale less than $r_0(\E)$ as $k$ increases)
$$|\E_k\cap B_r(v)|  \geq c r^n$$
for every $h=1,2,3$, for every $r$ and for every $k$ large enough (depending on $r$). Thus there are at least three chambers of $\K$ which have positive volume nearby $v$, so that $v\not\in{\rm Reg}(\overline{\K})$. By Theorem~\ref{thm:dim-red} any blow-up of $\overline{\K}$ at $v$ has the form $\K' \times \R$ for some conical cluster $\K'$ in $\R$. Since the only nontrivial conical cluster in $\R$ is the half-line, we find that $\K' \times \R$ is actually an half-space. Hence, by Theorem \ref{thm reg part 1}, $v\in{\rm Reg}(\overline{\K})$. We have obtained a contradiction and the proof is complete.
\end{proof}

\begin{proof}
  [Proof of Theorem \ref{thm regularity}] Combine Theorem \ref{thm reg part 1}, Theorem \ref{thm:sing-set-dim} and Proposition \ref{prop:dim2}.
\end{proof}

\begin{proof}
  [Proof of Theorem \ref{thm main}] Combine Theorem \ref{thm existence} and Theorem \ref{thm regularity}.
\end{proof}

\bibliography{references}
\bibliographystyle{is-alpha}

\end{document}